\newtheorem{Theorem}{Theorem}[part]
\newtheorem{Proposition}{Proposition}[part]
\newtheorem{Lemma}{Lemma}[part]
\newtheorem{Remark}{Remark}[part]
\newtheorem{Example}{Example}[part]
\def \L{\mathbf{L}}
\def \R{\mathbb{R}}
\def \E{\mathbb{E}}
\def \F{\mathbb{F}}
\def \P{\mathbb{P}}
\def \Uc{{\cal U}}
\def \Bc{{\cal B}}
\def \Cc{{\cal C}}
\def \Fc{{\cal F}}
\def \Gc{{\cal G}}
\def \Hc{{\cal H}}
\def \Kc{{\cal K}}
\def \Lc{{\cal L}}
\def \Pc{{\cal P}}
\def \Oc{{\cal O}}
 \def \Nc{{\cal N}}
\def \Tc{{\cal T}}
\def \Vc{{\cal V}}
\def \Wc{{\cal W}}
\def \Vc{{\cal V}}
\def \1{{\mathds 1}}
\def \Lb{{\bf L}}
\def \eps{\varepsilon}
\def \ni{\noindent}
\def \ep{\hbox{ }\hfill$\Box$}
\def\reff#1{{\rm(\ref{#1})}}
\newcommand{\nc}{\newcommand}
\nc{\esssup}{\mathop{\mathrm{ess\,sup}}}
\nc{\essinf}{\mathop{\mathrm{ess\,inf}}}
\def\Xb{{\mathbf{X}}}
\def\x{\times}
\def\beqs{\begin{eqnarray*}}
\def\enqs{\end{eqnarray*}}
\def\beq{\begin{eqnarray}}
\def\enq{\end{eqnarray}}
\def\vc{\vartheta}
\def\ur{{\rm u}}
\def\xr{{\rm x}}
\def\Tr{{\rm Tr}}
\def\Ur{{\rm U}}
\def\i{\mbox{\tiny \rm 1}}
\def\Cb{\mathbb C}
\def\S{\mathbf S}
\def\vr{{\rm v}}
\begin{document}

\title{Quenched mass transport of particles\\ towards a target}

\author{Bruno Bouchard\thanks{Universit\'e Paris-Dauphine, PSL University, CNRS, UMR 7534, CEREMADE, 75016 Paris, France, email \texttt{bouchard@ceremade.dauphine.fr}.}~\thanks{This work is partially supported by the ANR project CAESARS (ANR-15-CE05-0024).}
\and
Boualem Djehiche \thanks{Department of Mathematics, KTH Royal Institute of Technology, SE-100 44 Stockholm, Sweden, 
e-mail \texttt{boualem@math.kth.se}.} ~\thanks{Financial support from the Swedish Research Council (VR) Grant no. 2016-04086 is gratefully acknowledged}
\and
Idris Kharroubi \thanks{Sorbonne Universit\'e, Sorbonne Paris Cit\'e, CNRS, Laboratoire de Probabilit\'es Statistique et Mod\'elisation, LPSM, F-75005 Paris, France, email \texttt{idris.kharroubi@upmc.fr}.} 
} 

\maketitle

 \begin{abstract}
We consider the stochastic target problem of finding the   collection of initial {laws} of a mean-field stochastic differential equation {such that}  we  can control its evolution to {ensure that it reaches } a prescribed set of terminal probability distributions, {at a fixed time horizon}.  {Here, laws are considered conditionally to the path of the   Brownian motion that drives the system.}  We establish a version of the geometric dynamic programming principle for the {associated}  reachability sets and prove that the corresponding value function is a viscosity solution of a geometric partial differential equation. {This provides a characterization of the initial masses that can be almost-surely transported towards a given target, along the paths of a stochastic differential equation. Our results} extend   \cite{soner2002dynamic} to our setting. 
\end{abstract}

\bigskip
\noindent\textbf{Mathematics Subject Classification (2010):} 93E20, 60K35, 49L25.

\bigskip
\noindent\textbf{Keywords:} McKean-Vlasov SDEs, dynamic programming, stochastic target, mass transportation, viscosity solutions.


\section{Introduction} 
Stochastic target problems are optimization problems {in which} the controller looks for the value{s} $x$ of {a} state  process $X^{t,x,\nu}$ at time $t$, {so that it can} reach some given  set $K$ at a given terminal time $T$, by choosing an appropriate control $\nu$.  Namely, the objective is to characterize the reachability sets 
\beq\label{reach}
V(t)= \Big\{x\in \R^d:~~X_T^{t,x,\nu}\in K \mbox{ for some admissible control $\nu$}\Big\}
\enq
for $t\in[0,T]$. Such optimization problems were first studied in \cite{ST02a} and \cite{soner2002dynamic} {in which} the function $v(t,x)=1-\mathds{1}_{V(t)}(x)$  is shown to  solve a Hamilton-Jacobi-Bellman   equation, in the viscosity {solution} sense. The main motivation of   \cite{soner2002dynamic,ST02a}  {is}  the so-called super-replication problem, in financial mathematics{: the} controller looks for possible initial endowments such that there exists an investment strategy allowing the terminal wealth to satisfy a {super-hedging} constraint,  {almost-surely} (see e.g. \cite{ElKQ95}). But, the range of applications is obviously much wider. 

Another important type of stochastic target problems concerns the case where the terminal constraint is imposed on the mean value of a function of the controlled process. In this case the reachability sets take the following form{:}
\beq\label{reach-mean}
{V_{\ell}}(t) = \Big\{x\in \R^d:~~\E[\ell(X_T^{t,x,\nu})]\geq 0 \mbox{ for some admissible control $\nu$}\Big\},\;~~\quad 
\enq
for $t\in[0,T]$.  This type of constraints is also {common in} financial applications. Indeed, the super-replication price is usually too high to be accepted by buyers. {This is a motivation for relaxing the a.s.~super-hedging criteria by only asking that  $X^{t,x,\nu}\in K$  holds}{, for instance,}  with a (high) probability $p<1$. In this case, the function $\ell$ takes the form $\ell(x)=\mathds{1}_{K}(x)-p$. {For $p=1$, one retrieves \eqref{reach}.} This approach was  introduced in \cite{FL99} {and further developed in} \cite{BET10} where the authors take advantage of the martingale representation theorem to transform the  constraint given in terms of the mean value into an almost-sure constraint.

One of the motivations of this paper is to study the stochastic target problem \eqref{reach-mean} in the case of a  mean-field  (or McKean-Vlasov) controlled diffusion{:}
\beqs
X^{t,\chi,\nu}_s = {\chi}+\int_t^sb_u(X^{t,\chi,\nu}_u,\P_{X^{t,\chi,\nu}_u},\nu_u)du+\int_t^s\sigma_u(X^{t,\chi,\nu}_u,\P_{X^{t,\chi,\nu}_u},\nu_u)dB_u,
\enqs
where $\P_{X^{t,\chi,\nu}_u}$ is the marginal law of $X^{t,\chi,\nu}_{u}$ under $\P$, $B$ is a standard Brownian motion {and $\chi$ is an independent  random variable whose distribution can be interpreted as the initial probability distribution of a population}. This type of stochastic target problems can be embedded into a more general class of problems involving the conditional laws given the Brownian path. Indeed, using the martingale representation theorem as in \cite{BET10}, the constraint in \reff{reach-mean} can be rewritten as
\beqs\begin{array}{lll}
\E_B[\ell(X_T^{t,\chi,\nu})]-\int_t^T\alpha_sdB_s\geq 0
\;   \mbox{ for some  controls $\nu$ and $\alpha$}\;,~~\quad 
 \end{array}
\enqs
where $\E_B$ denotes the conditional expectation given $B$.
In particular, if  we define the control $\bar \nu=(\nu,\alpha)$ and the controlled process $\bar X^{t, (\chi,0),\bar \nu}= (X^{t, \chi, \nu},\int_t^.\alpha dB)$, {this reads }
\beqs
 L\big(\P^B_{\bar X_T^{t,(\chi,0),\bar \nu}}\big)\geq 0 \mbox{ for some   control $\bar \nu$} ,
\enqs
{in which $\P^B_\zeta$ denotes the conditional law of a random variable $\zeta$ given $B$,}   
and
\beqs
L(\mu) & = & \int_{\R^d\times{\R}}(\ell(x)-y)\mu(dx,dy).
\enqs
These considerations suggest {to study a general constraint:}  
\beqs
 \P^B_{X_T^{t,\chi,\nu}}\in G \mbox{ for some admissible control $\nu$},
\enqs
in which $X^{t,\chi,\nu}$ is now defined by 
\beq\label{quenched-sde}
X^{t,\chi,\nu}_{s} =\chi+\int_{t}^{s} b_u(X^{t,\chi,\nu}_u,\P^B_{X^{t,\chi,\nu}_u},\nu_u)du+\int_{t}^{s} \sigma_u(X^{t,\chi,\nu}_u,\P^B_{X^{t,\chi,\nu}_u},\nu_u)d B_u,~~
\enq
 $G$ is a Borel subset of probability measures and $\chi$ is the (random) initial position.

This general formulation is of importance on its own right as it is related to the probabilistic analysis of large scale particle systems, e.g. polymers in  random media,  {in which} one is interested in the behavior of  particles  conditionally on the environment. {This is} also known as  `quenched' {behaviors/properties} {(quenched law of large numbers, quenched large deviations etc.)}, which is  in general different from the so-called `annealed' behaviors obtained by averaging over the underlying random environment (see e.g.~\cite{birkner10,giacomin07,DM89} and the references therein). 
For diffusion processes, quenching boils down to making the drift and diffusion coefficients dependent on the conditional marginal law given the environment, while annealing corresponds to the case where the coefficients depend on the unconditional marginal law (see e.g.~\cite{DM89}).  We therefore coin the term quenched diffusion instead of conditional diffusion to refer to SDEs of the form \reff{quenched-sde}. For our stochastic target problem, the constraint $\P^B_{X_T}\in G$ imposed on the conditional law of the diffusion process is a quenched property for the underlying process.

 One can also further identify the initial condition $\chi$ as a law $\mu$. Then, our problem can be interpreted as a transport problem. What is the collection of initial distributions $\mu$ of a population of particles, that all have the same dynamics, such that the terminal conditional law $ \P^B_{X_T^{t,\chi,\nu}}$, given the environment modeled by the Brownian path $B$, satisfies a certain constraint? This amounts to asking what kind of masses can be transported along the SDE so as to reach a certain set, almost-surely, at $T$:
\begin{align}\label{eq: probleme cibe intro}
\Vc(t)   =    \Big\{ \mu:~\exists (\chi,\nu)~\mbox{s.t.}~\P^B_{\chi}=\mu~\mbox{ and }~\P^B_{X^{t,\chi,\nu}_T}\in  G \Big\}.
\end{align}

This type of viability problems appears naturally in statistical physics. It is also encountered in e.g.~agricultural crop management, as highlighted in Example \ref{example} below.

The rest of the paper is organized as follows. In Section \ref{sec-descr}, we describe in details the quenched controlled diffusion. {We provide some (expected) existence and stability results, together with a conditioning property.} Section \ref{sectarget-DPP} is devoted to the detailed presentation of the quenched stochastic target problem \reff{eq: probleme cibe intro}.  {We prove that it admits a }geometric dynamic programming principle.  {This is the main result of the paper}.  Then, {one can combine the technologies developped in   \cite{carda12,CCD15}  and  \cite{soner2002dynamic} to  derive in Section \ref{PDEdyn} the associated Hamilton-Jacobi-Bellman equation, which extends the main result of  \cite{soner2002dynamic}  to our context. In Section \ref{sec: Uccirc}, we provide an alternative formulation which is more adapted to the case where the reachability set is a half space in one direction (see \cite{soner2009dynamic}), we also comment on the choice of the class of controls, and provide an interpretation in terms of control of the law of a population of particles. } 

\section{Quenched mean-field SDE}\label{sec-descr}

We first describe our probabilistic setting. The $d$-dimensional Brownian motion is constructed on the canonical space in a usual way. More precisely, given a fixed time horizon $T > 0$,  we let $\Omega^{\circ}$ denote the space of continuous $\R^{d}$-valued functions on $[0,T]$, starting at $0$, and  let $\F^{\circ}=(\Fc^{\circ}_{t})_{t\le T}$ denote the filtration generated by the canonical process $B(\omega^{\circ}):=\omega^{\circ}$, $\omega^{\circ} \in \Omega^{\circ}$. We set $\Fc^{\circ}=\Fc^{\circ}_T$ and endow $(\Omega^{\circ},\Fc^{\circ})$ with the Wiener measure $\P^{\circ}$. Later on, $\bar \F^{\circ}=(\bar \Fc^{\circ}_{t})_{t\le T}$ will denote the $\P^{\circ}$-completion of $\F^{\circ}$. 

In order to model the initial probability distribution  of the population, we let  $\Omega^{\i}:=[0,1]^{d}$ be endowed with its Borel $\sigma$-algebra $\Fc^{\i}:=\Bc([0,1]^d)$ and the Lebegues measure $\P^{\i}$. It supports the  $[0,1]^{d}$-uniformly distributed random variable $\xi(\omega^{\i})=\omega^{\i}$,   $\omega^{\i}\in\Omega^{\i}$.
We then define the product filtered space $(\Omega,\Fc,\F,\P)$ by setting $\Omega:=\Omega^{\circ}\times\Omega^{\i}$, $\P=\P^{\circ}\otimes \P^{\i}$,  $\Fc=\Fc_{T}$ where $\F=(\Fc_{t})_{t\le T}$   is the augmentation of $(\Fc_{t}^{\circ}\otimes\Fc^{\i})_{t\le T}$. {From now on, any identity involving random variables has to be taken in $\P$-a.s.~sense. } 
We canonically extend the random variable $\xi$ and the process $B$ on $\Omega$ by setting $\xi(\omega)=\xi(\omega^{\i})$ and $B(\omega)=B(\omega^{\circ})$ for any $\omega=(\omega^{\circ},\omega^{\i})\in \Omega$. We still denote by $\F^{\circ}$ the filtration generated by the extended process $B$ on $\Omega$. Note that it follows from \cite[Chapter 2, Theorem 6.15 and Proposition 7.7]{KS91} applied to the process $(t,\omega)\in[0,T]\times\Omega\mapsto(\xi(\omega),B_t(\omega))$ that $\F$ is right continuous. 

Given a random variable $Y\in \L_{0}(\Omega,\Fc,\P;\R^{d})$ (resp. $Y\in \L_{1}(\Omega,\Fc,\P;\R^{d})$), we   let   $\P_Y^B$ (resp. $\E_B[Y]$) denote a regular conditional law (resp. expectation) under $\P$ of the random variable $Y$  given $(B_t)_{t\leq T}$  on $\R^{d}$.  
In particular, we have the following identifications
\beq
\P^B_Y(A,\omega) & = & \P^{\i}_{Y(\omega^{\circ},.)}(A)\label{id-cond-law}\\
 \E_B\big[Y  \big](\omega) & = & \E^{\i}\big[Y(\omega^{\circ},.)\big]\;  \label{id-cond-exp}
\enq
 for any $\omega=(\omega^{\circ},\omega^{\i})\in\Omega$ and any $A\in\Bc(\R^d)$. Here, $\E^{\i}$ denotes the expectation under $\P^{\i}$ and $\P^{\i}_{Y(\omega^{\circ},.)}$ denotes the law under $\P^{\i}$ of the random variable defined on $\Omega^{\i}$ by $Y(\omega^{\circ},.)(\omega^{\i})=Y(\omega^{\circ},\omega^{\i})$.
We let  $\Pc({\rm S})$  denote the space of probability measures on a Borel space {$({\rm S},\Bc({\rm S}))$}, and define 
\beqs
\Pc_{2} & := & \left\{\mu \in \Pc(\R^d,\Bc(\R^d)) \text{ s.t. } \int_{\R^d} {|x|}^2\mu(dx)<+\infty\right\},
\enqs
where $|x|$ is the Euclidean norm of $x$. 
This space is endowed with the $2$-Wasserstein distance defined by
\begin{align*}
\Wc_2(\mu,\mu')  := & \Big(\inf\Big\{\int_{\R^d\times\R^d}|x-y|^2\pi(dy,dy):~
  \pi \in \Pc(\R^d\times\R^d, \Bc(\R^d\times\R^d))  \\ 
 & \qquad\qquad\qquad \text{ s.t. } \;\pi(\cdot\times\R^d)= \mu \mbox{ and }\pi(\R^d\times\cdot)=\mu'\Big\}\Big)^{1\over 2}\;,
\end{align*}
for $\mu,\mu'\in \Pc_{2}$. For later use, we also define the collection $\Pc_{2}^{\bar \F^{\circ}}$ of $\bar \F^{\circ}$-adapted continuous $\Pc_{2}$-valued processes. 

Let now $\Ur$ be a closed subset of $\R^q$ for some $q\geq1$ and denote by $\Uc$ the collection of $\Ur$-valued $\F$-{progressively measurable} processes. This will be the set of controls. 
Let $\bar \Tc^{\circ}$ denote the set of $[0,T]$-valued $\bar \F^{\circ}$-stopping times. Given $\theta\in \bar \Tc^{\circ}$ and  $\chi\in \Xb^{2}_{\theta}:=\L^{2}(\Omega,\Fc_{\theta},\P;\R^{d})$, $\nu \in \Uc$, and $(b,a):[0,T]\x \R^{d}\x \Pc_{2}\x \Ur\longrightarrow \R^{d}\times \R^{d\times d}$, we let $X^{\theta,\chi,\nu}$ denote the solution   of 
\begin{equation}
X_{\cdot} = \E[\chi| \Fc_{\theta\wedge \cdot}]+\int_{\theta}^{\theta \vee \cdot} b_{s}\big(X_{s},\P_{X_{s}}^B,\nu_{s}\big)ds  \label{eq: SDEMF}
 + \int_{\theta}^{\theta \vee \cdot} a_{s}\big(X_{s},\P_{X_{s}}^B,\nu_{s}\big)dB_{s},
\end{equation}
in which   $(b,a)$ is assumed to be continuous, bounded and satisfies:

\vspace{2mm}

\noindent  \textbf{(H1)} There exists a constant $L$ such that
\beqs
|b_{t}(x,\mu,\cdot)-b_{t}(x',\mu',\cdot)|+|a_{t}(x,\mu,\cdot)-a_{t}(x',\mu',\cdot)| \leq L\Big(|x-x'|+\Wc_2(\mu,\mu')  \Big)
\enqs
for all $t\in[0,T]$, $x,x'\in\R^d$ and $\mu,\mu'\in\Pc_2$. 

\vspace{2mm}

The term  $\E[\chi| \Fc^{}_{\theta\wedge \cdot}]$ in \reff{eq: SDEMF} allows to define $X$ as a continuous adapted process on $[0,T]$, which is done for convenience of notations. One could obviously only consider the process on $[\![\theta,T]\!]$. 

{\begin{Remark} {\rm Note that the controls can depend on the initial value of $\chi$. One could also restrict to $\bar \F^{\circ}$-{progressively measurable} processes, see Section \ref{sec: Uccirc} for a discussion.} \end{Remark}} 

The above condition ensures as usual that a unique strong solution to \reff{eq: SDEMF} can indeed be defined.  
 
\begin{Proposition}\label{prop: existence unicite eds} For all $\theta\in \bar \Tc^{\circ}$, $\nu\in\Uc$ and  $\chi\in \Xb^{2}_{\theta}$, \reff{eq: SDEMF} admits a unique strong solution $X^{\theta,\chi,\nu}$, and it  satisfies 
\begin{align}\label{eq: X carre int}
\E\Big[\sup_{s\in [0,T]}|X_s^{\theta,\chi,\nu}|^{2}\Big] <  +\infty\;.
\end{align} 
Moreover, for all $(t,\chi,\nu )\in [0,T]\x \Xb_{t}^{2}\x \Uc$, if $t_{n}\to t$, $\chi_{n}\to \chi$ in $\L_{2}$ with  $\chi_{n}\in \Xb_{t_{n}}^{2}$  for all $n$,  and $(\nu^{n})_{n}\subset \Uc$ converges to $\nu$ $dt\x d\P$-a.e., then 
\begin{align}\label{eq: continuity loi par rapport condi initial}
\lim_{n\to \infty }\E[\Wc_2(\P^{B}_{X^{t_{n},\chi_{n},\nu^{n}}_{T}},\P^{B}_{X^{t,\chi,\nu}_{T}})^{2}]=0.  
\end{align}
\end{Proposition}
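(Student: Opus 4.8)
The proof splits naturally into two parts: (i) well-posedness of the McKean--Vlasov SDE \reff{eq: SDEMF} together with the moment bound \reff{eq: X carre int}, and (ii) the stability estimate \reff{eq: continuity loi par rapport condi initial}. For part (i), the plan is a standard Picard fixed-point argument on the space $\Pc_2^{\bar\F^\circ}$ of continuous $\bar\F^\circ$-adapted $\Pc_2$-valued processes. Given a candidate flow of conditional laws $(\mu_s)_{s\le T}\in\Pc_2^{\bar\F^\circ}$, freeze it in the coefficients and solve the resulting classical (non-McKean--Vlasov) SDE $dX_s=b_s(X_s,\mu_s,\nu_s)ds+a_s(X_s,\mu_s,\nu_s)dB_s$ with initial datum $\E[\chi|\Fc_{\theta\wedge\cdot}]$; since $b,a$ are bounded and Lipschitz in $x$ uniformly in the frozen arguments by \textbf{(H1)}, this has a unique strong solution with $\E[\sup_{s\le T}|X_s|^2]<\infty$ by the usual Burkholder--Davis--Gundy and Gronwall argument, using $\chi\in\L^2$. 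Then one sets $\Phi(\mu)_s:=\P^B_{X_s}$ and checks that $\Phi$ maps $\Pc_2^{\bar\F^\circ}$ into itself (adaptedness of $s\mapsto\P^B_{X_s}$ follows from \reff{id-cond-law} and measurability of regular conditional laws; continuity in $\Wc_2$ from $\Wc_2(\P^B_{X_s},\P^B_{X_{s'}})^2\le\E_B[|X_s-X_{s'}|^2]$ and path continuity of $X$). One contracts $\Phi$ on a short time interval in the metric $\sup_{s\le\cdot}\E[\Wc_2(\mu_s,\mu'_s)^2]$, again using boundedness and the Lipschitz property of the coefficients in $\mu$ plus $\Wc_2(\P^B_{X_s},\P^B_{X'_s})^2\le\E_B[|X_s-X'_s|^2]$, and iterates over $[0,T]$ to obtain the global solution; \reff{eq: X carre int} is then inherited from the frozen-coefficient estimate applied to the fixed point.

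For part (ii), the stability estimate, the plan is a Gronwall/BDG estimate comparing $X^n:=X^{t_n,\chi_n,\nu^n}$ with $X:=X^{t,\chi,\nu}$. Extending everything to $[0,T]$ via the $\E[\chi|\Fc_{\theta\wedge\cdot}]$ device, write the difference $X^n_s-X_s$ as the sum of: the initial discrepancy $\E[\chi_n|\Fc_{t_n\wedge s}]-\E[\chi|\Fc_{t\wedge s}]$, which tends to $0$ in $\L^2$ uniformly in $s$ by $\chi_n\to\chi$ in $\L^2$, right-continuity of $\F$, and $t_n\to t$ (martingale convergence / continuity of conditional expectations); the drift and diffusion increments, split as usual into a ``Lipschitz'' part controlled by $L(|X^n_s-X_s|+\Wc_2(\P^B_{X^n_s},\P^B_{X_s}))\le L(|X^n_s-X_s|+(\E_B[|X^n_s-X_s|^2])^{1/2})$ and an error part coming from evaluating the (fixed, continuous, bounded) coefficients at $(X_s,\P^B_{X_s},\nu^n_s)$ versus $(X_s,\P^B_{X_s},\nu_s)$. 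Taking $\E[\sup_{u\le s}|\cdot|^2]$, applying BDG to the stochastic integral, and noting $\E[\E_B[|X^n_u-X_u|^2]]=\E[|X^n_u-X_u|^2]$, Gronwall's lemma gives $\E[\sup_{s\le T}|X^n_s-X_s|^2]\le C(\text{initial error}+\text{coefficient error})$, and the coefficient error tends to $0$ by dominated convergence (coefficients bounded and continuous, $\nu^n\to\nu$ $dt\times d\P$-a.e.). Finally $\E[\Wc_2(\P^B_{X^n_T},\P^B_{X_T})^2]\le\E[\E_B[|X^n_T-X_T|^2]]=\E[|X^n_T-X_T|^2]\to0$ yields \reff{eq: continuity loi par rapport condi initial}.

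The main obstacle, and the point requiring genuine care rather than routine estimation, is the handling of the conditional law $\P^B_{\cdot}$ throughout: one must ensure that $s\mapsto\P^B_{X_s}$ is a well-defined $\bar\F^\circ$-adapted continuous $\Pc_2$-valued process (so that freezing it in the coefficients is legitimate and the fixed-point space is the right one), and that the elementary but crucial inequality $\Wc_2(\P^B_Y,\P^B_{Y'})^2\le\E_B[|Y-Y'|^2]$ holds, which rests on the identification \reff{id-cond-law} and the fact that the coupling $(Y(\omega^\circ,\cdot),Y'(\omega^\circ,\cdot))$ under $\P^\i$ is admissible for the Wasserstein problem between the conditional laws. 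A secondary delicate point is the dependence on the possibly random initial time $\theta\in\bar\Tc^\circ$ and the reduction, in \reff{eq: continuity loi par rapport condi initial}, to deterministic times $t_n\to t$ with $\chi_n\in\Xb^2_{t_n}$: one uses right-continuity of $\F$ (established in the excerpt) to pass the conditional expectations $\E[\chi_n|\Fc_{t_n\wedge\cdot}]$ to the limit. Everything else is the standard McKean--Vlasov machinery.
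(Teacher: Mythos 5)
Your proposal is correct and follows essentially the same route as the paper: a frozen-coefficient fixed-point argument for existence and uniqueness (the paper contracts on conditional laws on path space in $\L_{2}(\Omega^{\circ};\Pc_{2}(\Cb,\Bc(\Cb)))$ following the cited reference, whereas you contract on the flow of time-marginals in $\Pc_{2}^{\bar\F^{\circ}}$ over short intervals and iterate — a minor variation of the same idea, both resting on $\Wc_2(\P^B_{X_s},\P^B_{X'_s})^2\le\E_B[|X_s-X'_s|^2]$), followed by the same BDG/Gronwall stability estimate in which the control-discrepancy terms vanish by boundedness, continuity and dominated convergence. No gaps.
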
º

\proof 1. The estimate \reff{eq: X carre int} is a consequence of the boundedness of $(b,a)$. \\
\noindent 2. Existence  follows from a  similar fixed point argument as in \cite{JMW10} (see also~\cite{S89} and \cite{ DV95,V88} for the martingale problem approach). Since we work in a slightly different context, we provide the proof for completeness. 
  
  \noindent 2.a.  Let $\Cb$ denote the space of continuous $\R^{d}$-valued maps on $[0,T]$ endowed with the sup-norm topology and $\Pc_2(\Cb,\Bc(\Cb))$ denote the set of probability measures $\hat P$ on $(\Cb,\Bc(\Cb))$ such that $ \int_{\Cb} \sup_{s\le T} |f_{s}|^{2} \;\hat P(df)<\infty$. For $\hat Q, \hat P \in \Pc_2(\Cb,\Bc(\Cb))$ and $t\le T$, we define the Wasserstein metric:
\begin{align*}
D_{t}(\hat P,\hat Q):=&\inf \big\{  \int_{\Cb^2} \sup_{{0\le s \le t}} |f_{s}-g_{s}|^{2} \;\hat R(df, dg)  : 
 \hat R \in \Pc(\Cb^2,\Bc(\Cb^2))\\
 & \qquad\qquad\qquad\qquad \text{ s.t. } \;\hat R(\cdot\times\Cb)= \hat P \mbox{ and }\hat R(\Cb\times\cdot)=\hat Q \big\}^{1\over 2}.
\end{align*}
If $\hat Q\in  \Pc_{2}(\Cb,\Bc(\Cb))$ has time marginals $(\hat Q_{s})_{s\le T}$ then 
$$
\Wc_2(\hat Q_{t},\hat Q_{s})^2\le  \int_{\Cb} |Y_{t}-Y_{s}|^{2} \hat Q(dY)
$$
so that $\Wc_2(\hat Q_{t},\hat Q_{s}) \to 0$ as $s\to t$, by dominated convergence. Hence, $(\hat Q_{s})_{s\le T}$ is continuous.  

\noindent 2.b. Let $\S_{2}$ denote the set of continuous adapted $\R^{d}$-valued processes $Z$ such that $\|Z\|_{\S_{2}}:=\E[\sup_{[0,T]}|Z|^{2}]^{\frac12}<\infty$. Let $\L_{2}(\Omega^{\circ}; \Pc_{2}(\Cb,\Bc(\Cb)))$ be the collection of   random variables defined on $\Omega^{\circ}$ and with values in $\Pc_{2}(\Cb,\Bc(\Cb))$, with finite norm $\E[\|\cdot\|^{2}_{\Pc_{2}(\Cb,\Bc(\Cb))}]^{\frac12}$. Let $\Phi$ be the map that to $\bar Q \in \L_{2}(\Omega^{\circ}; \Pc_{2}(\Cb,$ $\Bc(\Cb)))$ associates $\P_{X^{\bar Q}}^{B}\in \L_{2}(\Omega^{\circ}; \Pc_{2}(\Cb,\Bc(\Cb)))$ in which $\P_{X^{\bar Q}}^{B}(\omega^{\circ})$ is a regular conditional law of $X^{\bar Q}$ given $\omega^{\circ}\in \Omega^{\circ}$ with  $X^{\bar Q}$ defined as the solution of 
\begin{align*}
X^{\bar Q}_{\cdot} =& \E[\chi|\bar \Fc^{\circ}_{\theta\wedge \cdot}] +\int_{\theta}^{\theta \vee \cdot} b_{s}\big(X^{\bar Q}_{s},\bar Q_{s},\nu_{s}\big)ds
 + \int_{\theta}^{\theta \vee \cdot} a_{s}\big(X^{\bar Q}_{s},\bar Q_{s},\nu_{s}\big)dB_{s} ,
\end{align*}
and where  $\bar Q_{s}(\omega^{\circ})$ is the $s$-marginal of $\bar Q(\omega^{\circ})$ for $\omega^{\circ}\in \Omega^{\circ}$. It follows from 2.a.~that $\P_{X^{\bar Q}}^{B}(\omega^{\circ})$ has continuous path, for $\P^{\circ}$-a.e.~$\omega^{\circ}\in \Omega^{\circ}$.  By repeating the arguments in \cite[Proof of Proposition 2]{JMW10}, see also 3.~below, we obtain that $\Phi$ is contracting. Since $\L_{2}(\Omega^{\circ}; \Pc_{2}(\Cb,\Bc(\Cb)))$ is complete, it follows that $\Phi$ admits a fix point $\bar Q$. 

\noindent 3. It remains to prove our last estimate. 
The Lipschitz continuity and boundedness of $(b,a)$ combined with Burkholder-Davis-Gundy inequality implies that one can find $C>0$, that only depends on $(b,a)$, such that 
\begin{align*}
&\E[\sup_{u\in [0,s]} |X_u^{t,\chi,\nu}-X_u^{t_{n},\chi_{n},\nu^{n}}|^{2}]\\
\le& C (|t-t_{n}|+\E[|\chi-\chi_n|^{2}])\\
&+ C \E\left[\int_{0}^{s}\left( \sup_{u\in [0,r]} |X_u^{t,\chi,\nu}-X_u^{t_n,\chi_n,\nu^{n}}|^{2}+\Wc_2^{2}(\P^{B}_{X^{t,\chi,\nu}_{r}},\P^{B}_{X^{t_{n},x_{n},\nu^{n}}_{r}})\right)dr \right] \\
& + C \E\left[\int_{0}^{s}|b_{r}(X^{t,\chi,\nu}_{r},\P^{B}_{X^{t,\chi,\nu}_{r}},\nu_{r})-b_{r}(X^{t,\chi,\nu}_{r},\P^{B}_{X^{t,\chi,\nu}_{r}},\nu^n_{r})|^{2}dr\right]\\
&+ C \E\left[\int_{0}^{s}|a_{r}(X^{t,\chi,\nu}_{r},\P^{B}_{X^{t,\chi,\nu}_{r}},\nu_{r})-a_{r}(X^{t,\chi,\nu}_{r},\P^{B}_{X^{t,\chi,\nu}_{r}},\nu^{n}_{r})|^{2} dr\right].
\end{align*}
Since 
\begin{align*}
\E[\Wc_2^{2}(\P^{B}_{X^{t,\chi,\nu}_{r}},\P^{B}_{X^{t_{n},x_{n},\nu^{n}}_{r}})]&\le \E[D_{r}^{2}(\P^{B}_{X^{t,\chi,\nu}},\P^{B}_{X^{t_{n},x_{n},\nu^{n}}})]\\
&\le  \E[\sup_{u\in [0,r]} |X_u^{t,\chi,\nu}-X_u^{t_{n},x_{n},\nu^{n}}|^{2}],
\end{align*}
by Gronwall's Lemma we obtain (for a different constant $C>0$)
\begin{align*}
&\E[\Wc_2^{2}(\P^{B}_{X^{t,\chi,\nu}_{T}},\P^{B}_{X^{t_{n},x_{n},\nu^{n}}_{T}})]\\
\le &\;\E[\sup_{u\in [0,T]} |X_u^{t,\chi,\nu}-X_u^{t_{n},x_{n},\nu^{n}}|^{2}]\\
\le &\;C (|t-t_{n}|+\E[|\chi-\chi_n|^{2}])\\
& + C \E\left[\int_{0}^{T}|b_{r}(X^{t,\chi,\nu}_{r},\P^{B}_{X^{t,\chi,\nu}_{r}},\nu_{r})-b_{r}(X^{t,\chi,\nu}_{r},\P^{B}_{X^{t,\chi,\nu}_{r}},\nu^{n}_{r})|^{2}dr\right]\\
&+ C \E\left[\int_{0}^{T}|a_{r}(X^{t,\chi,\nu}_{r},\P^{B}_{X^{t,\chi,\nu}_{r}},\nu_{r})-a_{r}(X^{t,\chi,\nu}_{r},\P^{B}_{X^{t,\chi,\nu}_{r}},\nu^{n}_{r})|^{2} dr\right].
\end{align*}
The function $(b,a)$ being continuous and bounded, the required result follows. 
\ep \\

\begin{Remark}{\rm We can construct a particle approximation for the SDE \reff{eq: SDEMF} as follows. We first note that for $t\in[0,T]$, $\chi\in\Xb_t$ and $\nu\in \Uc$ there exist Borel maps $\xr$ and $\ur$ such that  $\chi=\xr((B_{s})_{s\le t}, \xi^1)$ $\P$-a.s.~and $\nu=\ur(\cdot, (B_{s})_{s\le \cdot}, \xi^1)$, up to modification. 
We then consider a sequence $(\xi^{\ell})_{\ell \geq 1}$ of $i.i.d.$ random variable with uniform law on $[0,1]^d$ and  independent of $B$ and we set  $(\chi^{\ell},\nu^{\ell}):=(\xr((B_{s})_{s\le t},\xi^\ell), \ur(\cdot, (B_{s})_{s\le \cdot}, \xi^{\ell}))$, for $\ell\ge 1$.

For $n\geq \ell\geq 1$ we define $X^{\ell}$  and $X^{n,\ell}$ as the  respective solutions to the SDEs:
\beqs
X_{\cdot}^{\ell} & = & \chi^\ell+\int_{t}^{ \cdot} b_{s}\big(X^{\ell}_{s},\P^B_{X^\ell_s},\nu_{s}^\ell\big)ds  
 + \int_{t}^{ \cdot} a_{s}\big(X^{\ell}_{s},\P^B_{X^\ell_s},\nu_{s}^\ell\big)dB_{s},
\enqs 
and
\beqs
X_{\cdot}^{n,\ell} & = & \chi^\ell+\int_{t}^{ \cdot} b_{s}\big(X^{n,\ell}_{s},\bar\mu^n_s,\nu_{s}^\ell\big)ds  
 + \int_{t}^{ \cdot} a_{s}\big(X^{n,\ell}_{s},\bar\mu^n_s,\nu_{s}^\ell\big)dB_{s},
\enqs 
where the measures $\bar \mu^n$, $n\geq 1$ are defined by
\beqs
\bar \mu ^n_s & := & {1\over n}\sum_{\ell=1}^n\delta_{{X^{n,\ell}_s}}\;,\quad s\geq 0\;.
\enqs
Then, following the same arguments as {in} \cite[Theorem 3]{JMW10}, we have 
\beqs
\lim_{n\rightarrow+\infty}\sup_{\ell\leq n}\E^1\Big[\sup_{u\in [0,T]}\big|X_u^{n,\ell}-X_u^\ell\big|^2\Big] & = & 0\;.
\enqs
 In particular, this induces the convergence of empirical  measures $\bar \mu^n$:
 \beqs
\lim_{n\rightarrow+\infty}\Wc_2(\bar \mu^n_s,\P^B_{X^1_s}) & = & 0\;,\quad s\in[0,T]\;.
\enqs}
\end{Remark}
In the sequel, we denote by $^t\omega^\circ$ the element $(\omega^\circ_{s\wedge t})_{s\in[0,T]}$ for $\omega^\circ\in\Omega^\circ$ and $t\in[0,T]$. 
 We note that the solution can also be defined  $\omega^{\i}$ by $\omega^{\i}$. More precisely, we have the following. 

\begin{Proposition}\label{prop: eds def condi omega 1} Fix $\theta\in \bar \Fc^{\circ}$,  $\chi\in \Xb^{2}_{\theta}$ and $\nu \in \Uc$.  Let $X^{Q}$ be the solution of \reff{eq: SDEMF} with $Q=(Q_{s})_{s\le T}\in \Pc_{2}^{\bar \F^{\circ}}$ in place of $(\P_{X_{s}}^{B})_{s\le T}$. Then, there exists  Borel  measurable   maps $\xr:\Omega^{\circ}\x \Omega^{\i}\to \R^{d}$ and  $\ur:[0,T]\x\Omega^{\circ}\x \Omega^{\i}\to \Ur$  such that $\chi=\xr(^{\theta}B,\xi)$ $\P$-a.s. and  $\nu_{\cdot}={\ur}_{\cdot}({^\cdot}B,\xi)$ $dt\x \P$-a.e.~on $[0,T]\times\Omega$, such that, for all stopping time $\tau$,  $X^{Q,\omega^{\i}}_{\tau\vee \theta}=X^{Q}_{\tau\vee \theta}(\cdot,\omega^{\i})$ $\P^{\circ}$-a.s.~for $\P^{\i}$-a.e.~$\omega^{\i}\in \Omega^{\i}$, in which $X^{Q,\omega^{\i}}$ solves 
\begin{align*}
X_{\cdot}^{Q,\omega^{\i}} =& \E[\xr(B,\omega^{\i})|\Fc_{\cdot\wedge \theta}]+\int_{\theta}^{\theta \vee \cdot} b_{s}\big(X^{Q,\omega^{\i}}_{s},Q_{s},\ur_{s}(^sB,\omega^{\i})\big)ds\\
& + \int_{\theta}^{\theta \vee \cdot} a_{s}\big(X^{Q,\omega^{\i}}_{s},Q_{s},\ur_{s}({}^sB,\omega^{\i})\big)dB_{s}.
\end{align*}
Moreover, the map $\omega^{\i}\in \Omega^{\i}\mapsto X^{Q,\omega^{\i}}_{\tau\vee \theta}\in \L_{2}(\Omega^{\i},\Fc^{\i},\P^{\i}; \L_{2}(\Omega^\circ,\Fc^{\circ}_T,\P^{\circ};\R^d))$ is measurable.
\end{Proposition}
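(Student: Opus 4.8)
The plan is to prove Proposition~\ref{prop: eds def condi omega 1} by combining a measurable-parametrization argument for the initial data and control with a disintegration (Fubini-type) argument on the product space $\Omega=\Omega^{\circ}\times\Omega^{\i}$, exploiting the fact that the Brownian motion $B$ depends only on $\omega^{\circ}$. The first step is to produce the Borel maps $\xr$ and $\ur$. Since $\chi\in\Xb^{2}_{\theta}$ is $\Fc_{\theta}$-measurable and $\Fc_{\theta}$ is (the augmentation of) $\sigma({}^{\theta}B,\xi)$, a standard functional representation (Doob--Dynkin lemma, together with a redefinition on a $\P$-null set) gives a Borel $\xr:\Omega^{\circ}\times\Omega^{\i}\to\R^{d}$ with $\chi=\xr({}^{\theta}B,\xi)$ $\P$-a.s. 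Likewise, the $\F$-progressive measurability of $\nu$ yields a jointly measurable $\ur:[0,T]\times\Omega^{\circ}\times\Omega^{\i}\to\Ur$, progressive in the $(t,\omega^{\circ})$ variables, with $\nu_{t}=\ur_{t}({}^{t}B,\xi)$ $dt\times\P$-a.e.; here one uses that $\Ur$ is a closed subset of $\R^{q}$, so projections of progressively measurable sets and measurable selection arguments apply. The square-integrability of $\chi$ transfers to $\xr$ in the sense that $\int_{\Omega^{\i}}\E^{\circ}[|\xr(B,\omega^{\i})|^{2}]\,\P^{\i}(d\omega^{\i})<\infty$, so for $\P^{\i}$-a.e.\ $\omega^{\i}$ the random variable $\xr(B,\omega^{\i})$ lies in $\L^{2}(\Omega^{\circ},\Fc^{\circ}_{T},\P^{\circ};\R^{d})$.

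Next I would, for fixed $\omega^{\i}\in\Omega^{\i}$, set up the SDE on $(\Omega^{\circ},\Fc^{\circ},\P^{\circ})$ driven by $B$ with coefficients $b_{s}(\cdot,Q_{s},\ur_{s}({}^{s}B,\omega^{\i}))$ and $a_{s}(\cdot,Q_{s},\ur_{s}({}^{s}B,\omega^{\i}))$ and initial condition $\E[\xr(B,\omega^{\i})\,|\,\Fc^{\circ}_{\cdot\wedge\theta}]$. Because $Q\in\Pc_{2}^{\bar\F^{\circ}}$ is a \emph{given} $\bar\F^{\circ}$-adapted process (not a fixed point of the law map), this is an ordinary SDE with random but $\F^{\circ}$-adapted, Lipschitz (in the state variable, by (H1)) and bounded coefficients, so by the classical strong existence and uniqueness theorem it admits a unique strong solution $X^{Q,\omega^{\i}}$ in $\S_{2}(\Omega^{\circ})$; the standard a priori estimate gives $\E^{\circ}[\sup_{[0,T]}|X^{Q,\omega^{\i}}|^{2}]\le C(1+\E^{\circ}[|\xr(B,\omega^{\i})|^{2}])$, which is finite for $\P^{\i}$-a.e.\ $\omega^{\i}$. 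The measurability claim, namely that $\omega^{\i}\mapsto X^{Q,\omega^{\i}}_{\tau\vee\theta}$ is measurable into $\L_{2}(\Omega^{\i};\L_{2}(\Omega^{\circ}))$, follows from the Picard iteration construction of $X^{Q,\omega^{\i}}$: each iterate is a jointly measurable function of $(\omega^{\circ},\omega^{\i})$ (this is where one invokes joint measurability of $\xr,\ur$ and of the coefficients), the iterates converge in $\S_{2}(\Omega^{\circ})$ uniformly (or in expectation) over $\omega^{\i}$ in a measurable way, and measurability is preserved under the limit; alternatively one can use a Carathéodory-type argument for measurability of solutions of SDEs with respect to a parameter.

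The identification step is the crux: I must show $X^{Q,\omega^{\i}}_{\tau\vee\theta}=X^{Q}_{\tau\vee\theta}(\cdot,\omega^{\i})$ for $\P^{\circ}$-a.e.\ $\omega^{\circ}$, for $\P^{\i}$-a.e.\ $\omega^{\i}$. The natural route is to observe that the process $(\omega^{\circ},\omega^{\i})\mapsto X^{Q,\omega^{\i}}_{\cdot}(\omega^{\circ})$, assembled from the fiberwise solutions, is a jointly measurable, $\F$-adapted, continuous process and to verify that it solves \reff{eq: SDEMF} (with $Q$ in place of $\P^{B}_{X_{s}}$) on the product space $\P$-a.s. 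For this one uses a stochastic-Fubini argument: the stochastic integral $\int_{\theta}^{\theta\vee\cdot}a_{s}(X^{Q,\omega^{\i}}_{s},Q_{s},\ur_{s}({}^{s}B,\omega^{\i}))dB_{s}$, computed fiberwise on $\Omega^{\circ}$, agrees $\P$-a.s.\ with the stochastic integral of the assembled integrand on $\Omega$, because the integrand is $\F^{\circ}$-adapted in $\omega^{\circ}$ for each $\omega^{\i}$ and jointly measurable, so the two constructions of the Itô integral coincide (this can be checked on simple integrands and passed to the limit in $\L^{2}(\P)$, using \eqref{id-cond-exp} to relate $\E_B$ and $\E^{\i}$). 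Once the assembled process is shown to solve \reff{eq: SDEMF} with coefficient-input $Q$, strong uniqueness for that equation (which holds by the same contraction/Gronwall argument as in Proposition~\ref{prop: existence unicite eds}, now with $Q$ frozen) forces it to coincide with $X^{Q}$, $\P$-a.s., hence fiberwise $\P^{\circ}\otimes\P^{\i}$-a.e.; evaluating at the stopping time $\tau\vee\theta$ and using that a $\P$-a.s.\ identity of continuous processes disintegrates into a $\P^{\i}$-a.e.\ family of $\P^{\circ}$-a.s.\ identities gives the claim. The main obstacle I anticipate is the careful handling of null sets across the disintegration together with the stochastic Fubini step: one must ensure the exceptional $\P^{\i}$-null set (off which $\xr(B,\cdot)\notin\L^{2}(\P^{\circ})$, or off which the fiberwise SDE is ill-posed, or off which the stochastic-integral identity fails) can be taken uniformly, and that the stopping time $\tau$ — a priori only $\bar\F^{\circ}$, or $\F$, measurable — interacts correctly with the fiberwise construction; restricting first to deterministic times and a countable dense set, then using path-continuity, is the standard device to circumvent this.
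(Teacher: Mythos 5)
Your proposal is correct and follows essentially the same route as the paper's (very terse) proof: obtain the Borel representations of $\chi$ and $\nu$, solve the SDE fiberwise in $\omega^{\i}$ with $Q$ frozen, establish measurability in $\omega^{\i}$, and identify the assembled process with $X^{Q}$. Your final step (stochastic Fubini plus pathwise uniqueness for the frozen-$Q$ equation) is just the expanded form of the paper's ``standard estimates show $\E[|X^{Q,\xi}_{\tau\vee\theta}-X^{Q}_{\tau\vee\theta}|^{2}\,|\,\xi]=0$'', which is the same conditional Gronwall comparison resting on the same substitution property.
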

\proof The existence of the Borel maps $\xr$ and $\ur$ is standard, and it  is not difficult to prove that $\omega^{\i}\in \Omega^{\i}\mapsto X^{Q,\omega^{\i}}_{\tau}\in  \L_{2}(\Omega^\circ,\Fc^{\circ}_T,\P^{\circ};\R^d)$ is measurable because $a$ and $b$ are continuous and bounded.  Standard estimates then show that $\E[|X^{Q,\xi}_{\tau\vee \theta}-X^{Q}_{\tau\vee \theta}|^{2}|\xi]=0$. 
 \ep\\

For later use, we now show that the law of $(X^{t,\chi,\nu},B)$ actually only depends on the joint law of $(\chi,\nu,^{t}\!B)$.

\begin{Proposition}\label{prop: equiv loi jointe} Let $\xr: \Omega^{\circ}\x \Omega^{\i} \to \R^{d}$ and $\ur:[0,T]\x\Omega^{\circ}\x \Omega^{\i}\to \Ur$ be Borel maps such that $\chi:=\xr(^{t}\!B,\xi)\in {\Xb^{2}_{t}}$ and $\nu:=\ur_{\cdot}(B,\xi)\in \Uc$. 
Let $\bar \xi$ and $\bar \xi'$ be $[0,1]^{d}$-valued $\Fc_{t}$-measurable and set $\bar \chi:=\xr(^{t}\!B,\bar \xi)$ and $\bar \nu:=\ur_{\cdot}(B,\bar \xi')$. Assume that $(\chi,{\nu_{\cdot\vee t}},^{t}\!B)$ and $(\bar \chi,{\bar \nu_{\cdot\vee t}},^{t}\!B)$ have the same law. Then, $(X^{t,\chi,\nu},B)$ and $(X^{t,\bar \chi,\bar \nu},B)$ have the same law.  
\end{Proposition}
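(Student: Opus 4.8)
The plan is to prove the identity of laws by a change-of-variables argument that transports the McKean-Vlasov problem on $(\Omega,\Fc,\P)$ to a purely ``canonical'' problem driven only by the joint law of $(\chi,\nu_{\cdot\vee t},{}^t\!B)$, and then to invoke the uniqueness statement of Proposition~\ref{prop: existence unicite eds}. First I would fix the regular conditional law viewpoint: by Proposition~\ref{prop: eds def condi omega 1} the solution $X^{t,\chi,\nu}$ can be built $\omega^{\i}$ by $\omega^{\i}$, and its conditional law $\P^B_{X^{t,\chi,\nu}_s}$ is, for $\P^\circ$-a.e.\ $\omega^\circ$, the law under $\P^{\i}$ of $\omega^{\i}\mapsto X^{t,\chi,\nu}_s(\omega^\circ,\omega^{\i})$; in particular this conditional law is a measurable function of $(X^{t,\chi,\nu}_r)_{r\le s}$ composed with the conditioning, hence ultimately a functional of $({}^s\!B,\xi)$ only through the data $(\xr,\ur)$ and the driving noise. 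The key observation is that, writing $\mu_s:=\P^B_{X^{t,\chi,\nu}_s}$, the pair $(X^{t,\chi,\nu},\mu)$ solves a \emph{standard} (non-McKean-Vlasov) SDE with random coefficients: given the $\bar\F^\circ$-adapted $\Pc_2$-valued process $\mu$, the process $X^{t,\chi,\nu}$ is the unique strong solution of \reff{eq: SDEMF} with $\mu_s$ frozen in place of $\P^B_{X_s}$, and $\mu$ is recovered as the conditional law of that solution. So the whole system is characterized by a fixed-point/compatibility condition that depends on $\omega$ only through $(\chi,\nu_{\cdot\vee t},{}^t\!B)$ and the increments of $B$ after $t$ (which are independent of $\Fc_t$ and have a fixed law under both setups, $B$ being the same Brownian motion in both).

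Concretely, I would set up the following steps. Step~1: record that, by the construction in Proposition~\ref{prop: eds def condi omega 1} and the identifications \reff{id-cond-law}--\reff{id-cond-exp}, there is a measurable map $\Psi$ such that $\P^B_{X^{t,\chi,\nu}_s}=\Psi_s(\xr,\ur,{}^s\!B)$ and $X^{t,\chi,\nu}_s=\Theta_s(\xr(^t\!B,\xi),\ur(B,\xi),B,{}^\cdot\!B)$ for suitable measurable functionals $\Theta,\Psi$ built from the SDE, the same functionals serving for any input random variables of the prescribed form. Step~2: apply these same functionals with $\bar\xi,\bar\xi'$ in place of $\xi$, giving a candidate process $\bar Y:=\Theta(\bar\chi,\bar\nu,B,{}^\cdot\!B)$ together with $\bar\mu:=\Psi(\xr,\ur,{}^\cdot\!B)$; note that $\bar\mu$ is \emph{literally the same} random process as $\mu$ because $\Psi$ does not see $\xi$ or $\bar\xi$ — it depends only on the maps $(\xr,\ur)$ and on $B$, which are common to both setups (this is the crux: the conditional law, i.e.\ the law of the $\omega^{\i}$-section, is insensitive to whether we call the uniform variable $\xi$ or $\bar\xi$, by \reff{id-cond-law}). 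Step~3: verify that $\bar Y$ solves \reff{eq: SDEMF} with initial datum $\bar\chi$ and control $\bar\nu$, i.e.\ that $\P^B_{\bar Y_s}=\bar\mu_s$: this follows since $(\bar Y,\bar\mu)$ satisfies exactly the same frozen-coefficient SDE and the same compatibility condition as $(X^{t,\chi,\nu},\mu)$, both expressed through $\Theta,\Psi$. Step~4: conclude $\bar Y=X^{t,\bar\chi,\bar\nu}$ by the uniqueness part of Proposition~\ref{prop: existence unicite eds}, and then compute the joint law of $(X^{t,\bar\chi,\bar\nu},B)=(\Theta(\bar\chi,\bar\nu,B,{}^\cdot\!B),B)$: since $(\bar\chi,\bar\nu_{\cdot\vee t},{}^t\!B)\overset{d}{=}(\chi,\nu_{\cdot\vee t},{}^t\!B)$ and the post-$t$ increments of $B$ are independent of $\Fc_t$ with a law not depending on the setup, applying a common measurable functional to inputs with equal joint law yields outputs with equal joint law, so $(X^{t,\bar\chi,\bar\nu},B)\overset{d}{=}(X^{t,\chi,\nu},B)$.

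The main obstacle I expect is Step~2--Step~3: making rigorous that the conditional-law functional $\Psi$ is genuinely a deterministic measurable map of $({}^\cdot\!B)$ (and the coefficient data) that is \emph{unchanged} when $\xi$ is replaced by $\bar\xi$, and that the fixed point defining $(X,\mu)$ is characterized purely through such functionals. This requires care because the conditional law is defined only up to $\P^\circ$-null sets and because $\bar\xi,\bar\xi'$ need not be independent of $B$ in the way $\xi$ is — but $\F_t$-measurability of $\bar\xi,\bar\xi'$ together with the product structure $\Omega=\Omega^\circ\times\Omega^{\i}$ and the fact that ${}^t\!B$ has the same law in both problems will let me push the argument through; the Picard iteration used in the proof of Proposition~\ref{prop: existence unicite eds} can be invoked to exhibit $\Psi$ as a pointwise limit of explicit measurable functionals, which makes the ``same functional'' claim transparent. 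Everything else (BDG/Gronwall-type estimates, measurable selection of $\xr,\ur$) is routine and already available in the excerpt.
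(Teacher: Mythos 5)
Your overall route (represent the solution and its conditional law as measurable functionals of the data, identify the two conditional-law processes, then conclude by uniqueness plus a transfer-of-law argument) is the same one the paper follows via \cite{claisse2013note}, Proposition~\ref{prop: eds def condi omega 1} and the Picard scheme of Proposition~\ref{prop: existence unicite eds}. The problem is the pivotal Step~2 (and the Step~3 ``compatibility'' built on it): you assert that the conditional-law functional $\Psi$ ``does not see $\xi$ or $\bar\xi$'', so that $\bar\mu$ is \emph{literally} the same process as $\mu$. As justified, this step never uses the hypothesis that the triples have the same law, and it proves too much: applied to $\bar\xi=\bar\xi'\equiv u_{0}$ constant, it would give $\P^{B}_{X^{t,\bar\chi,\bar\nu}}=\P^{B}_{X^{t,\chi,\nu}}$, which is absurd (the former is then a Dirac-type flow). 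The reason is that $\P^{B}_{X^{t,\bar\chi,\bar\nu}_{s}}(\omega^{\circ})$ is the $\P^{\i}$-law of the $\omega^{\i}$-section, and the $\omega^{\i}$-dependence enters through $(\bar\xi(\omega^{\circ},\cdot),\bar\xi'(\omega^{\circ},\cdot))$, whose conditional law given $B$ is in general \emph{not} the (uniform, diagonal) law of $(\xi,\xi)$. So the two conditional-law processes can only be identified after one proves that the regular conditional laws of the data $(\chi,\nu_{\cdot\vee t})$ and $(\bar\chi,\bar\nu_{\cdot\vee t})$ given $B$ coincide a.s., and propagates this identity through each iterate of the fixed-point scheme (solving the frozen-coefficient equation $\omega^{\i}$ by $\omega^{\i}$ as in Proposition~\ref{prop: eds def condi omega 1}); this is precisely where the paper uses the equal-law hypothesis and the common approximating sequence $\hat P^{n}$.

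A second, related gap is in Step~4: since $\nu_{\cdot\vee t}=\ur_{\cdot\vee t}(B,\xi)$ is adapted, it is correlated with the post-$t$ increments of $B$, so you cannot treat $(\chi,\nu_{\cdot\vee t},{}^{t}B)$ and the future increments as independent inputs fed into a common functional. What the argument really needs is equality of the laws of $(\chi,\nu_{\cdot\vee t},B)$ and $(\bar\chi,\bar\nu_{\cdot\vee t},B)$ jointly with the \emph{whole} Brownian path: this is what yields the a.s.\ equality of conditional laws given $B$, it is the hypothesis of the cited Theorem~3.3 of \cite{claisse2013note}, and it is exactly what is established in the only place the proposition is applied (the proof of Proposition~\ref{prop: equivWS}). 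With only ${}^{t}B$ plus ``the increments have a fixed law'', one can construct $\ur,\bar\xi'$ (e.g.\ a control that after some $s_{0}>t$ uses $\xi$ or $1-\xi$ in a way flipped by the sign of $B_{s_{0}}-B_{t}$) for which the triples with ${}^{t}B$ agree in law while the joint conditional law of $(\chi,\nu)$ given the full path differs from that of $(\bar\chi,\bar\nu)$, and the conclusion fails. To repair your proof: (i) work with equality of laws jointly with $B$; (ii) deduce a.s.\ equality of the conditional laws of the data given $B$; (iii) show along the Picard iteration that the conditional-law processes coincide a.s.; (iv) only then invoke uniqueness and transfer the law.
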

 
\proof One can follow \cite[Theorem 3.3]{claisse2013note}.
In their case, the conditioning is made with respect to $^{t}\!B$, in our case it has to be done with respect to $(^{t}\!B,\xi)$, where $\xi$ is independent of $B$, so that the equation can actually be solved conditionally to $\xi$, see Proposition \ref{prop: eds def condi omega 1}. Given the fixed point procedure used in Step 2.~of the proof of Proposition \ref{prop: existence unicite eds} above,   one can then find a sequence $(\hat P^{n})_{n\ge 1}\subset \L_{2}(\Omega^{\circ},\Pc_{2}(\Cb,\Bc(\Cb)))$ such that both $\hat P^{n}\to \P^{B}_{X^{t,\chi,\nu}}$ and $\hat P^{n}\to  \P^{B}_{X^{t,\bar \chi,\bar \nu}}$ as $n\to \infty$.  \ep\\

\section{The stochastic target problem: alternative formulations and geometric dynamic programming principle}
\label{sectarget-DPP}

 Our aim is to provide a characterization of the set of initial measures  for the conditional law of the initial condition $\chi$ given $B$ such that the conditional law of $X^{t,\chi,\nu}_{T}$ given $B$ belongs to a fixed closed  subset $G$ of $\Pc_2$: 
 \beq\nonumber\label{defV_0}
\Vc(t)  & = &   \Big\{ \mu\in\Pc_2:~\exists (\chi,\nu)\in   \Xb^2_{t}\x \Uc~\mbox{s.t.}~\P^B_{\chi}=\mu~\mbox{ and }~\P^B_{X^{t,\chi,\nu}_T}\in  G  \Big\}.
\enq

In the above, and all over this paper, identities involving random variables must be taken in the a.s.~sense. In particular,  $\P^B_{X^{t,\chi,\nu}_T}\in  G$ means $\P^B_{X^{t,\chi,\nu}_T}\in  G ~~\P-{\rm a.s.}$

Before we go on, let us first give an example of application inspired from {agricultural crop management.}

\begin{Example}\label{example}  Consider the problem of a farmer that controls his production of wheat by spreading nitrogen fertilizer  or water on his field. The field is viewed as a collection of particles to which the farmer will bring additional fertilizer, water, etc. His aim is to maximize the dry mass level of the field, the quality of the wheat, etc., whose initial state can be viewed as a random variable $\chi$ (assigning $d$ characteristics of the production to each particle) over the two dimensional state space $\Omega^{\i}:=[0,1]^{2}$ modeling the field surface. The fertilizing effort is modeled by the control  $\nu$. Then, we let $X^{t,\chi,\nu}$ denote the current distribution of these characteristics. Its dynamics is of the form \eqref{eq: SDEMF}
in which the Brownian diffusion part is used to take into account several contingencies, e.g.~climatic ones. In particular, the dependency of the coefficients on $\P^{B}_{X^{t,\chi,\nu}}$ can model local interactions between particles (representing the points in the field), e.g.~related to the local water ressource, access to sun light, etc. The aim is to know what kind of initial state of the field allows to reach some given production level (in terms of volume, quality, etc.) at the end of the farming season. We shall come back to this example in Section \ref{sec: other formulation} below.
\end{Example}

\vspace{2mm}

We now show that $\chi$ in the definition of $\Vc(t)$  can be replaced by any random variable $\chi'\in \Xb^2_{t}$ such that  $\P^B_{\chi'}=\mu$. Apart from showing that only the distribution $\mu$ matters {(which is a desirable property if we think in terms of mass transportation)}, this will be of important use later on to provide a geometric dynamic programming principle for $\Vc$. 
 \begin{Proposition}\label{prop: equivWS} A measure $\mu\in\Pc_2$ belongs to $\Vc(t)$ if and only if for all $\chi\in \Xb^2_{t}$  such that $\P^B_{\chi}=\mu$ there exists $\nu\in \Uc$ for which  $\P^B_{X^{t,\chi,\nu}_T}\in  G$.
\end{Proposition}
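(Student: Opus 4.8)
The statement is an equivalence, and one direction is trivial: if $\mu\in\Vc(t)$ witnessed by some particular pair $(\chi_0,\nu_0)$, it is not obvious that *every* $\chi$ with $\P^B_\chi=\mu$ works — so actually neither direction is free. The ``if'' direction (the property holding for all such $\chi$ implies $\mu\in\Vc(t)$) is immediate because $\Xb^2_t$ is nonempty and contains at least one $\chi$ with $\P^B_\chi=\mu$ (e.g.\ built from $\xi$ via a measurable map, using that $\xi$ is $[0,1]^d$-uniform and $\Fc_t$-measurable). So the real content is: given that $\mu\in\Vc(t)$, i.e.\ there is *one* pair $(\chi,\nu)$ with $\P^B_\chi=\mu$ and $\P^B_{X^{t,\chi,\nu}_T}\in G$ a.s., show that *any other* $\chi'\in\Xb^2_t$ with $\P^B_{\chi'}=\mu$ also admits a control $\nu'$ with $\P^B_{X^{t,\chi',\nu'}_T}\in G$.

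First I would represent the given datum in the functional form of Proposition~\ref{prop: eds def condi omega 1} / Proposition~\ref{prop: equiv loi jointe}: write $\chi=\xr(^tB,\xi)$ and $\nu_\cdot=\ur_\cdot(B,\xi)$ for Borel maps $\xr,\ur$. The key point is that, since $\xi$ is independent of $B$ and uniform on $[0,1]^d$, the conditional law $\P^B_\chi=\mu$ is a property only of $\xr$ and (the law of) $B$; and I want to transport both $\xr$ and $\ur$ along a measurable ``rearrangement'' of the randomness $\xi$ that turns $\chi$ into $\chi'$. Concretely: given $\chi'$ with $\P^B_{\chi'}=\mu$, I would produce a $[0,1]^d$-valued $\Fc_t$-measurable random variable $\bar\xi$, built from $(\,^tB,\xi)$ or from the randomness underlying $\chi'$, such that $(\chi',\,^tB)$ and $(\xr(^tB,\bar\xi),\,^tB)$ have the same joint law, and moreover such that $\bar\xi$ is still $[0,1]^d$-uniform conditionally on $B$ — this is where a measurable conditional-quantile / disintegration construction is needed (disintegrate $\P_{(\chi',\,^tB)}$ over $\,^tB$, push the uniform $\xi$ through the conditional quantile transform to match the conditional law of $\chi'$ given $B$, which equals $\mu$ a.s.). Then set $\chi'' := \xr(^tB,\bar\xi)$ and $\nu'' := \ur_\cdot(B,\bar\xi)$; by construction $(\chi'',\nu''_{\cdot\vee t},\,^tB)$ has the same law as $(\chi,\nu_{\cdot\vee t},\,^tB)$, so Proposition~\ref{prop: equiv loi jointe} gives that $(X^{t,\chi'',\nu''},B)\overset{d}{=}(X^{t,\chi,\nu},B)$, hence $\P^B_{X^{t,\chi'',\nu''}_T}\in G$ a.s.\ (as $G$ is closed/Borel and the constraint is a law of $(X_T,B)$-type event).

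The final step is to pass from $\chi''$ to the prescribed $\chi'$. Since $\P^B_{\chi'}=\P^B_{\chi''}=\mu$ but $\chi'$ and $\chi''$ need not be equal a.s.\ (only equal in law given $B$), I would invoke Proposition~\ref{prop: equiv loi jointe} once more: choose the control $\nu'$ for $\chi'$ to be $\ur_\cdot(B,\bar\xi')$ for a suitable $[0,1]^d$-valued $\Fc_t$-measurable $\bar\xi'$ arranged so that $(\chi',\nu'_{\cdot\vee t},\,^tB)\overset{d}{=}(\chi'',\nu''_{\cdot\vee t},\,^tB)\overset{d}{=}(\chi,\nu_{\cdot\vee t},\,^tB)$; Proposition~\ref{prop: equiv loi jointe} then yields $(X^{t,\chi',\nu'},B)\overset{d}{=}(X^{t,\chi,\nu},B)$ and therefore $\P^B_{X^{t,\chi',\nu'}_T}\in G$ a.s., which is exactly what we need. (If one prefers to avoid two invocations, a single combined rearrangement matching $(\chi',\cdot,\,^tB)$ to $(\chi,\nu_{\cdot\vee t},\,^tB)$ directly does the job.)

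\textbf{Main obstacle.} The delicate point is the measurable-selection/rearrangement step: producing, for the given $\chi'$, an $\Fc_t$-measurable $[0,1]^d$-valued variable that is simultaneously (i) a measurable function of the available randomness, (ii) uniform on $[0,1]^d$ conditionally on $B$ (so that plugging it into $\xr,\ur$ reproduces the right conditional laws), and (iii) such that the resulting joint law with $\,^tB$ matches that of $(\chi,\nu_{\cdot\vee t},\,^tB)$. This requires a careful disintegration of the law of $\chi'$ (and of $\nu'$) given $B$ together with the multidimensional quantile transform, exploiting that $\Omega^{\i}=[0,1]^d$ carries a uniform variable independent of $B$; the rest is bookkeeping via Propositions~\ref{prop: eds def condi omega 1} and~\ref{prop: equiv loi jointe}.
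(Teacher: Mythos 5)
The nontrivial direction is set up correctly (and your reduction of the ``if'' direction is the same as the paper's), but the step you yourself flag as the ``main obstacle'' is exactly where the argument fails, and your proposed fix does not close it. Proposition \ref{prop: equiv loi jointe} only applies to initial data of the specific structural form $\bar\chi=\xr({}^{t}B,\bar\xi)$, i.e.\ the \emph{same} Borel map $\xr$ evaluated at some $\Fc_t$-measurable noise $\bar\xi$ (and $\bar\nu=\ur_\cdot(B,\bar\xi')$); its conclusion concerns $X^{t,\bar\chi,\bar\nu}$ for that $\bar\chi$. Your disintegration/conditional-quantile step only produces a $\bar\xi$ for which $\xr({}^{t}B,\bar\xi)$ has the \emph{same law} as the prescribed $\chi'$ --- this is your $\chi''$, and for $\chi''$ the proposition indeed yields a control. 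But the statement requires a control for $\chi'$ itself, and your final step (``arrange $\bar\xi'$ so that $(\chi',\nu'_{\cdot\vee t},{}^{t}B)$ has the same law as $(\chi,\nu_{\cdot\vee t},{}^{t}B)$ and invoke Proposition \ref{prop: equiv loi jointe} again'') is not a legitimate use of that proposition: with $\chi'$ in the role of $\bar\chi$, the hypothesis $\bar\chi=\xr({}^{t}B,\bar\xi)$ is not verified, and matching laws cannot substitute for it. Indeed, equality of the laws of the triples $(\chi,\nu_{\cdot\vee t},{}^{t}B)$ does not encode how the control correlates with the Brownian increments after $t$; it is precisely the structural representation through $(\xr,\ur)$ that makes the proposition true. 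So after your construction you only know that some copy of $\chi'$ in law is controllable --- which is the hypothesis $\mu\in\Vc(t)$ restated, not the conclusion.

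What is missing is an \emph{exact, almost-sure} representation $\chi'=\xr(\bar\xi)$ through the same map, with $\bar\xi$ uniform on $[0,1]^d$ and $\Fc_t$-measurable; this noise-recovery step goes in the opposite direction to your quantile transform (from the prescribed variable back to the noise, not from the noise to a variable with prescribed law) and is the heart of the paper's proof. There, one first uses that $\P^B_\chi=\mu$ is deterministic to take $\xr$ depending on $\xi$ only (which also spares you the harder problem of inverting $\xr({}^{t}B,\cdot)$ measurably in $\omega^\circ$), then modifies $\xr$ on a Lebesgue-null set (the Cantor set) to make it surjective, extracts an analytically measurable right inverse $\zeta$ by \cite[Corollary 18.23]{AB06}, replaces it by a Borel version $\tilde\zeta$, and sets $\bar\xi:=\tilde\zeta(\bar\chi)$ for the prescribed $\bar\chi$. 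Then $\xr(\bar\xi)=\bar\chi$ exactly, $\bar\xi$ is uniform and independent of $B$, and the control $\bar\nu:=u\1_{[0,t)}+\1_{[t,T]}\ur(\cdot,{}^{\cdot}B,\bar\xi)$ puts you squarely within the hypotheses of Proposition \ref{prop: equiv loi jointe}, applied to $\bar\chi$ itself. Without this right-inverse (or an equivalent exact coupling making the prescribed initial condition a function of the original noise through $\xr$), your argument establishes only an identity in law and leaves the required control for $\chi'$ unconstructed.
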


\proof  Let $\tilde \Vc(t)$ denote the collection of measures $\mu\in\Pc_2$ such that  for all $\chi\in \Xb^2_{t}$  satisfying $\P^B_{\chi}=\mu$ there exists $\nu\in \Uc$ for which  $\P^B_{X^{t,\chi,\nu}_T}\in  G$. Clearly, $\tilde \Vc(t)\subset \Vc(t)$. 
We now prove the reverse inclusion. 
Let $\mu \in \Vc(t)$ and consider $(\chi,\nu)\in   \Xb^2_{t}\x \Uc$ such that $\P^B_{\chi}=\mu$ and $\P^B_{X^{t,\chi,\nu}_T}\in  G$. We fix $\bar\chi\in \Xb^2_{t}$ such that $\P^B_{\bar \chi}=\mu$ and we construct $\bar \nu\in\Uc$ such that $(\bar \chi,\bar \nu,B)$ and $(\chi,\nu,B)$ have the same law. Since $\P^B_{\chi}$ is deterministic, one can find a Borel map $\xr$ such that $\chi=\xr(\xi)$ $a.e.$

We first argue as in \cite[Proof of Proposition 3.1]{pham2018zero} and note that we can suppose $ \xr:~[0,1]^d\rightarrow\R^d$ to be surjective.  Indeed, if this is not the case, it is enough to modify $\xr$ on the set $\mathcal K\x \R^{d-1}$, where  $\mathcal K$ stands for the Cantor set, by the composition of  a surjective  map from $[0, 1]$ to $\R^d$  and $x\in \R^{d}\mapsto c(x^{1})$ where $c$ is  the Cantor function from $\Kc$ to $[0, 1]$. 
By \cite[Corollary 18.23]{AB06}, it follows that $ \xr$ admits an analytically measurable right-inverse, denoted by $\zeta : \R^d \rightarrow [0, 1]^d$, which satisfies

\begin{enumerate}[(i)]
\item $\xr(\zeta(x))=x$ for all $x\in\R^d$; 

\item $ \xr ^{-1}(\zeta^{-1}({A})) = {A}$, for any subset ${A}$ of $[0, 1]^d$;

\item $\zeta^{-1}({A})$ is analytically measurable in $\R^d$ for each Borel subset ${A}$ of $[0,1]^d$. 
\end{enumerate}
Recalling that every analytic subset of $\R^d$ is universally measurable (see e.g. Theorem 12.41 in \cite{AB06}), it follows that {one can find a Borel measurable map $\tilde \zeta$ such that $\zeta=\tilde \zeta$ Lebesgue almost-everywhere.}

We now define $\bar\xi$ by $\bar \xi=\tilde \zeta(\bar \chi)$, {so that $\bar \xi=\zeta(\bar \chi)$ a.e.} Since $\Fc_0$ is $\P$-complete, $\bar\xi$ is $\Fc_0$-measurable. Then using (ii) and since $\chi$ and $\bar \chi$ have the same law, we obtain
\beqs
\P(\bar \xi\in {A}) & = & \P(\bar \chi \in \zeta^{-1}({A}))~~=~~ \P(\chi \in \zeta^{-1}({A}))~~=~~\P^{\i}({A})\;,  \enqs
{for all Borel set $A$.} 
This proves that $\bar \xi$ has the same law as $\xi$.  Moreover, we have from (i)
\beqs
\xr(\bar \xi) & = & \bar \chi~{\P-{\rm a.s.}}
\enqs
which shows that $(\xi,\chi, B)$ and $(\bar \xi,\bar \chi, B)$ have the same law:
\begin{align*}
\P[\xi \in A_{1},\chi \in A_{2},B \in A_{3}]
&=\P[\xi \in A_{1},\xr(\xi) \in A_{2}]\P[B \in A_{3}]
\\
&=\P[\bar \xi \in A_{1},\xr(\bar \xi) \in A_{2}]\P[B \in A_{3}]
\\
&=\P[\bar \xi \in A_{1},\bar \chi \in A_{2},B \in A_{3}]
\end{align*}
for all Borel sets $A_{1},A_{2},A_{3}$. 
%

Since $\nu$ is $\F$-progressively measurable, it is, up to modification, of the form 
\beqs
\nu_s(\omega^{\circ},\omega^{\i}) & = & {\rm u}{(s,{}^s\!}B(\omega^{\circ}),\xi(\omega^{\i}))\;,\quad {s\in[t,T]}\;,~
\enqs
with ${\rm u}$ a Borel map.   
  Set now $\bar \nu:=u\1_{[0,t)}+\1_{[t,T]}{\rm u}(\cdot,{}^{\cdot}B,\bar \xi)\in \Uc$, for some $u\in \Ur$.

  Then, $(\bar \chi,\bar \nu_{t\vee \cdot}, B)$ and $(  \chi,  \nu_{t\vee \cdot}, B)$ have the same law, and Proposition \ref{prop: equiv loi jointe} implies  that $\P^B_{X^{t,\chi,\nu}_T}=\P^B_{X^{t,\bar \chi,\bar \nu}_T}$ so that the latter belongs to $G$, thus proving that $\Vc(t)\subset \tilde \Vc(t)$, by arbitrariness of $\bar \chi$.  \ep \\

 Before stating the dynamic programming principle, let us provide the following measurable selection lemma. We define the subset $\Gc$ of $[0,T]\times\L_2(\Omega^{\i},\Fc^{\i},\P^{\i};\R^d)$ by
\beqs
\Gc & := & \big\{(t,\chi)\in[0,T]\times\L_2(\Omega^{\i},\Fc^{\i},\P^{\i};\R^d):~\exists \nu \in\Uc\mbox{ s.t. }~\P^B_{X^{t,\chi,\nu}_{T}}\in G\big\}\;.
\enqs
From now on, we consider $\Uc$ as a subset of $\L_{2}([0,T]\x {\Omega}, dt\times d\P;\Ur)$ endowed with its strong topology.  We also introduce the subset $\Uc_t$ of $\Uc$ defined by
\beqs
\Uc_t & = & \big\{ \nu \in \Uc~:~\nu \mbox{ is  progressively measurable w.r.t }{\F}_{[t,T]}\big\}
\enqs
where ${\F}_{[t,T]}$ is the completion of $(\sigma((B_{r\vee t}-B_{t})_{0\le r\le s }, \xi))_{s\in [0,T]}$. We first rewrite the set $\Gc$ as follows.
\begin{Lemma}\label{lem : dans Ut} We have the following identification 
\beqs
\Gc & := & \big\{(t,\chi)\in[0,T]\times\L_2(\Omega^{\i},\Fc^{\i},\P^{\i};\R^d):~\exists \nu \in\Uc_t\mbox{ s.t. }~\P^B_{X^{t,\chi,\nu}_{T}}\in G\big\}\;.
\enqs
\end{Lemma}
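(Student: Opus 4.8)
The plan is to prove the identification by establishing the nontrivial inclusion: if $(t,\chi)\in\Gc$ witnessed by some $\nu\in\Uc$, then there exists $\bar\nu\in\Uc_t$ with $\P^B_{X^{t,\chi,\bar\nu}_T}\in G$. The reverse inclusion is immediate since $\Uc_t\subset\Uc$. The key observation is that the controlled SDE \reff{eq: SDEMF} only feels the control on $[\![t,T]\!]$ and the noise increments of $B$ after $t$, together with $\chi$ (which here is $\Fc^{\i}$-measurable, hence independent of $B$). So the idea is to replace $\nu$ by a control that depends on the past of $B$ only through the increments after time $t$, without changing the joint law of the relevant data.

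First I would invoke Proposition \ref{prop: eds def condi omega 1} (or just the standard representation used repeatedly in the excerpt) to write $\nu_s(\omega)=\ur(s,{}^s\!B(\omega^\circ),\xi(\omega^{\i}))$ for a Borel map $\ur$, $dt\times d\P$-a.e.\ on $[t,T]\times\Omega$. Then I would define $\bar\nu$ by freezing the pre-$t$ part of the Brownian path: set $\bar\nu_s:=u\,\1_{[0,t)}(s)+\1_{[t,T]}(s)\,\ur(s,{}^s\!\tilde B,\xi)$ where $\tilde B_\cdot:=B_{\cdot\vee t}-B_t$ is the shifted Brownian motion (started afresh at $t$), $u\in\Ur$ arbitrary. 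By construction $\bar\nu\in\Uc_t$, since ${}^s\!\tilde B$ is measurable with respect to the completion of $\sigma((B_{r\vee t}-B_t)_{r\le s},\xi)$. The point is now that $(\chi,\bar\nu_{\cdot\vee t},{}^t\!B)$ has the same law as $(\chi,\nu_{\cdot\vee t},{}^t\!B)$: indeed $\chi$ is independent of $B$, and conditionally on $\chi=\xi$, the maps $s\mapsto\ur(s,{}^s\!B,\xi)$ on $[t,T]$ and $s\mapsto\ur(s,{}^s\!\tilde B,\xi)$ on $[t,T]$ have the same law because $({}^t\!B,({}^s\!B)_{s\ge t})$ and $({}^t\!B,({}^s\!\tilde B)_{s\ge t})$ have the same law jointly with $({}^t\!B)$ — here one uses that $B$ restricted to $[0,t]$ is independent of its increments on $[t,T]$ and that the dynamics of $X$ only involve $B$ through $\int_t^\cdot\,dB$, so one may as well replace $B$ by $\tilde B$ on $[t,T]$. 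Wait — more carefully: I must check that $(\chi,\bar\nu_{\cdot\vee t},{}^tB)$ and $(\chi,\nu_{\cdot\vee t},{}^tB)$ coincide in law with $B$ itself kept as the driving noise, which requires matching the joint law including the full path of $B$; since $\nu_{\cdot\vee t}$ depends on $({}^s\!B)_{s\ge t}$ while $\bar\nu_{\cdot\vee t}$ depends on $({}^s\!\tilde B)_{s\ge t}$, and these two processes are \emph{not} equal path-by-path, I cannot match the joint law with $B$ pathwise. Instead I apply Proposition \ref{prop: equiv loi jointe}: it only requires equality of the joint law of $(\chi,\nu_{\cdot\vee t},{}^t\!B)$, which does hold, and concludes $(X^{t,\chi,\nu},B)\overset{d}{=}(X^{t,\chi,\bar\nu},B)$, hence in particular $\P^B_{X^{t,\chi,\bar\nu}_T}$ has the same law as $\P^B_{X^{t,\chi,\nu}_T}$. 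Since membership in the closed set $G$ is a property of the law of $\P^B_{X^{t,\chi,\cdot}_T}$ (the constraint is $\P$-a.s., hence preserved under equality in law), we get $\P^B_{X^{t,\chi,\bar\nu}_T}\in G$ a.s., so $(t,\chi)$ belongs to the right-hand set.

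The step I expect to be the main obstacle is verifying precisely that $(\chi,\bar\nu_{\cdot\vee t},{}^t\!B)$ and $(\chi,\nu_{\cdot\vee t},{}^t\!B)$ have the same law — one must be careful that $\ur$ is only defined $dt\times d\P$-a.e., that the progressive-measurability/completion issues in the definition of $\F_{[t,T]}$ are handled so that $\bar\nu$ really lies in $\Uc_t$, and that the shift $\tilde B$ of the Brownian path correctly decouples the pre-$t$ and post-$t$ behaviour. Once this equality in law is in hand, Proposition \ref{prop: equiv loi jointe} does the rest mechanically. I would phrase the argument to mirror, almost verbatim, the last paragraph of the proof of Proposition \ref{prop: equivWS}, which already performs the analogous substitution (there changing $\xi$ into $\bar\xi$; here changing the $B$-dependence of $\ur$ into dependence on the increments only), so the write-up can be kept short by referring back to that construction.
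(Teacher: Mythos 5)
Your reduction to $\Uc_t$ hinges on the claim that $(\chi,\bar\nu_{\cdot\vee t},{}^t\!B)$ and $(\chi,\nu_{\cdot\vee t},{}^t\!B)$ have the same law, and this claim is false in general, so the appeal to Proposition \ref{prop: equiv loi jointe} does not go through. Replacing ${}^s\!B$ by ${}^s\!\tilde B$ with $\tilde B_\cdot=B_{\cdot\vee t}-B_t$ amounts to freezing the pre-$t$ portion of the Brownian path at the \emph{zero} path. Take for instance $\ur_s(w,\xi)=f(w_{t/2})$ with $f$ bounded and non-constant: then $\nu_{s\vee t}=f(B_{t/2})$ is a non-degenerate random variable correlated with ${}^t\!B$, while $\bar\nu_{s\vee t}=f(\tilde B_{t/2})=f(0)$ is a constant, so even the marginal laws of the two controls differ, let alone the joint laws with ${}^t\!B$. (Your intermediate assertion that $({}^t\!B,({}^s\!B)_{s\ge t})$ and $({}^t\!B,({}^s\!\tilde B)_{s\ge t})$ have the same law is also false, since ${}^s\!B$ for $s\ge t$ determines ${}^t\!B$ whereas ${}^s\!\tilde B$ is independent of it.) The analogy with the proof of Proposition \ref{prop: equivWS} breaks down precisely here: there one substitutes $\xi$ by $\bar\xi$ while leaving the dependence on $B$ untouched, so the joint law with ${}^t\!B$ is preserved; here you alter the $B$-dependence itself. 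Moreover, the hypothesis $\P^B_{X^{t,\chi,\nu}_T}\in G$ $\P$-a.s.\ only yields, after conditioning on the pre-$t$ path (Fubini), that for $\P^{\circ}$-a.e.\ pre-$t$ path the control with that path frozen in still reaches the target; the zero path is a Wiener-null realization and may well be a ``bad'' one, so there is no reason that $\P^B_{X^{t,\chi,\bar\nu}_T}\in G$.

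The paper's proof uses exactly this conditioning idea instead of a law-invariance argument: writing $\nu_s(\omega)=\ur_s(\omega^{\circ},\omega^{\i})$, it defines the concatenation ${\rm w}\oplus_t{\rm w}':={\rm w}_{\cdot\wedge t}+({\rm w}'_{\cdot\vee t}-{\rm w}'_t)$ and the frozen control $\nu^{\omega^{\circ}}_s(\tilde\omega^{\circ},\omega^{\i}):=\ur_s(\omega^{\circ}\oplus_t\tilde\omega^{\circ},\omega^{\i})$, with the pre-$t$ path frozen at a realization $\omega^{\circ}$ rather than at zero; by \cite[Theorem 5.4]{claisse2013note} and Proposition \ref{prop: eds def condi omega 1}, $\P^B_{X^{t,\chi,\nu^{\omega^{\circ}}}_T}$ is (a version of) the conditional law of $X^{t,\chi,\nu}_T$ given the path up to $t$, so the a.s.\ constraint guarantees the existence of some $\omega^{\circ}$ for which $\nu^{\omega^{\circ}}$, which is $\F_{[t,T]}$-progressively measurable, still satisfies $\P^B_{X^{t,\chi,\nu^{\omega^{\circ}}}_T}\in G$. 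Your argument can be repaired by replacing the shift $\tilde B$ with this frozen-path construction and the appeal to Proposition \ref{prop: equiv loi jointe} with this conditioning/selection step.
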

\proof Let $\nu\in \Uc$ be such that $\P^B_{X^{t,\chi,\nu}_{T}}\in G$. Then, there exists  a progressively measurable map  $\ur$  such that $\nu_{s}(\omega)=\ur_{s}(\omega^{\circ},\omega^{\i})$ for $s\in [0,T]$. For $s\in [0,T]$, ${\rm w},{\rm w'}\in \Omega^{\circ}$, set ${\rm w}\oplus_{s}{\rm w'}:={\rm w}_{\cdot \wedge s} + ({\rm w'}_{\cdot \vee s}-{\rm w'}_{s})$. Define  $\nu_{s}^{ \omega^{\circ}}(\tilde \omega^{\circ},\omega^{\i}):=\ur_{s}(\omega^{\circ}\oplus_{t}\tilde \omega^{\circ},\omega^{\i})$.   Then,  one can find $\omega^{\circ}\in \Omega^{\circ}$ such that  $\P^B_{X^{t,\chi,\nu^{ \omega^{\circ}}}_{T}}(\tilde \omega^{\circ})\in G$ for $\P^{\circ}$-a.e.~$\tilde \omega^{{\circ}} \in \Omega^{\circ}$, see \cite[Theorem 5.4]{claisse2013note} and Proposition \ref{prop: eds def condi omega 1}. The control $\nu^{ \omega^{\circ}}$ is progressively measurable w.r.t. ${\F}_{[t,T]}$. 
\ep

\begin{Lemma}\label{Lem-meas-selec} For any probability measure $\mathfrak{P}$ on $[0,T]\times\L_2(\Omega^{\i},\Fc^{\i},\P^{\i};\R^d)$, there exists a measurable map $\vartheta:~\Gc\rightarrow \Uc$ such that
\beqs
\P^B_{X^{t,\chi,\vartheta(t,\chi)}_{T}} & \in & G 
\enqs
for $\mathfrak{P}$-a.e. $(t,\chi)\in \Gc$. Moreover, for each $(t,\chi)\in \Gc$, $\vartheta(t,\chi)$ can be chosen to be in $\Uc_t$. 
\end{Lemma}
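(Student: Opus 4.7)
My approach is to apply a measurable selection theorem (Jankov--von Neumann) to the set-valued map
$$\Psi(t,\chi) := \big\{\nu \in \Uc_t : \P^B_{X^{t,\chi,\nu}_T} \in G\big\},$$
whose effective domain coincides with $\Gc$ by Lemma~\ref{lem : dans Ut}. Once a universally measurable selector $\vartheta$ of $\Psi$ is produced, $\mathfrak{P}$-measurability is automatic, and the constraint $\vartheta(t,\chi)\in \Uc_t$ is built into the construction since $\Psi(t,\chi)\subset \Uc_t$.

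First I would set up a Polish framework. Since $\Ur$ is closed in $\R^q$, $\Uc$ is a closed subset of the Polish space $\L_{2}([0,T]\times \Omega, dt\otimes d\P;\R^q)$, hence Polish for the strong topology; similarly $\L_{2}(\Omega^{\i},\Fc^{\i},\P^{\i};\R^d)$ is Polish. Thus the product $E:=[0,T]\times \L_{2}(\Omega^{\i},\Fc^{\i},\P^{\i};\R^d)\times \Uc$ is Polish. The key step is then to show that the graph
$$\mathrm{Gr}(\Psi) := \big\{(t,\chi,\nu)\in E : \nu\in \Uc_t,\ \P^B_{X^{t,\chi,\nu}_T}\in G\big\}$$
is a Borel (or at least analytic) subset of $E$; its projection onto the first two coordinates is exactly $\Gc$ by Lemma~\ref{lem : dans Ut}, so $\Gc$ is then analytic, and the Jankov--von Neumann theorem (see e.g.\ \cite[Theorem 18.22]{AB06}) produces an analytically (hence universally, hence $\mathfrak{P}$-) measurable selector $\vartheta:\Gc\to \Uc$ with $(t,\chi,\vartheta(t,\chi))\in \mathrm{Gr}(\Psi)$.

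To prove that $\mathrm{Gr}(\Psi)$ is Borel, I would handle its two defining conditions separately. For the terminal constraint, set
$$C := \big\{(t,\chi,\nu)\in E : \P^B_{X^{t,\chi,\nu}_T}\in G\ \P\text{-a.s.}\big\}.$$
Given a sequence $(t_n,\chi_n,\nu^n)\to(t,\chi,\nu)$ in $C$, Proposition~\ref{prop: existence unicite eds} yields
$$\E\big[\Wc_2(\P^B_{X^{t_n,\chi_n,\nu^n}_T},\P^B_{X^{t,\chi,\nu}_T})^2\big]\to 0,$$
so, along a subsequence, the convergence holds $\P$-a.s.; combined with the closedness of $G$ in $\Pc_2$, this gives $\P^B_{X^{t,\chi,\nu}_T}\in G$ $\P$-a.s., hence $C$ is closed. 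For the filtration constraint, the set $D:=\{(t,\nu)\in [0,T]\times \Uc:\nu\in \Uc_t\}$ is Borel: picking a countable dense set of times $(s_k)\subset[0,T]$ and a countable generator of the shifted filtration $\F_{[t,T]}$ built from $\xi$ and increments of $B$ after $t$, the condition that $\nu_{s_k}$ coincides $\P$-a.s.\ with its conditional expectation given the generator at time $s_k$ is Borel jointly in $(t,\nu)$. Then $\mathrm{Gr}(\Psi)=C\cap\big(\{(t,\chi,\nu):(t,\nu)\in D\}\big)$ is Borel.

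The main obstacle is the closedness of $C$: Proposition~\ref{prop: existence unicite eds} only delivers $L_2$-continuity of the conditional laws, not pointwise continuity, so one must pass to a subsequence to exploit the a.s.~closedness of $G$; this is why the constraint is stated in the $\P$-a.s.~sense throughout. A more technical (but routine) point is the Borel character of $D$, which requires expressing $\F_{[t,T]}$-progressive measurability through a countable, jointly measurable family of generators in order to avoid an uncountable intersection in $t$.
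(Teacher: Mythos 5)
Your proof follows essentially the same route as the paper's: apply the Jankov--von Neumann selection theorem (Bertsekas--Shreve, Proposition 7.49) to the graph $\{(t,\chi,\nu):\nu\in\Uc_t,\ \P^B_{X^{t,\chi,\nu}_T}\in G\}$, observe via Lemma~\ref{lem : dans Ut} that its projection onto the first two coordinates is exactly $\Gc$, and modify the resulting analytically/universally measurable selector $\mathfrak{P}$-a.e.\ if a Borel representative is wanted (the paper cites Bertsekas--Shreve Lemma~7.27 for this last step). The only real difference is how the measurability of the graph is justified: the paper asserts that the whole set is closed, citing \eqref{eq: continuity loi par rapport condi initial}, whereas you separate the terminal constraint $C$ (closed, after passing to a subsequence to upgrade $L^2$-convergence of $\Wc_2(\P^B_{X^{t_n,\chi_n,\nu^n}_T},\P^B_{X^{t,\chi,\nu}_T})$ to a.s.\ convergence) from the filtration constraint $\nu\in\Uc_t$ (which you argue is merely Borel). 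Your split is arguably more careful, since the set $\{(t,\nu):\nu\in\Uc_t\}$ is not obviously closed jointly in $(t,\nu)$ — the filtration $\F_{[t,T]}$ grows as $t$ decreases — and Borel already suffices for the analytic-set hypothesis of Jankov--von Neumann. Either way, the conclusion and the mechanism are the same.
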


\proof  It follows from \reff{eq: continuity loi par rapport condi initial} of Proposition \ref{prop: existence unicite eds} that the set 
$$
{\cal J}:=\{(t,\chi,\nu)\in   [0,T]\times\L_2(\Omega^{\i},\Fc^{\i},\P^{\i};\R^d)\x \Uc:~ \P^B_{X^{t,\chi,\nu}_{T}}\in G\mbox{ and }\nu\in \Uc_t\}
$$
is closed.  Moreover, the set $[0,T]\times\L_2(\Omega^{\i},\Fc^{\i},\P^{\i};\R^d)\x \Uc$ is a Polish space. Then,  the Jankov-von Neumann Theorem (see \cite[Proposition 7.49]{BertsekasShreve.78}),  ensures 
the existence of an analytically measurable function 
\begin{eqnarray*}
\tilde \vartheta:  [0,T]\times\L_2(\Omega^{\i},\Fc^{\i},\P^{\i};\R^d) & \longrightarrow & \mathcal{U}
\end{eqnarray*} 
such that
$$
(t,\chi, \tilde \vartheta(t,\chi))\in {\cal J} \; \mbox{ for   all }   (t,\chi) \in \Gc\;.
$$
Since any analytically measurable map is also universally measurable, the existence of $\vc$ follows from   \cite[Lemma 7.27]{BertsekasShreve.78}. We conclude by appealing to Lemma \ref{lem : dans Ut}.
\ep\\

We can now state the dynamic programming principle. In the following,   $\P^B_{X^{t,\chi,\nu}_{\theta}}\in  \Vc(\theta)$  means 
\beqs
\P^\circ\big(\big\{\omega^\circ\in \Omega^\circ~:~\P^B_{X^{t,\chi,\nu}_{\theta}}(\omega^\circ)\in  \Vc(\theta(\omega^\circ))   \big\}\big) & = & 1\;.
\enqs 

\begin{Theorem}\label{Th-DDP}
Fix $t\in[0,T]$ and $\theta\in \bar \Tc^{\circ}$ with values in $[t,T]$.  
Then,
\beqs
\Vc(t)  =  \Big\{ \mu\in\Pc_2:~\exists (\chi,\nu)\in   \Xb^2_{t}\x \Uc\mbox{ s.t. }~\P^B_{\chi}=\mu~\mbox{ and }~\P^B_{X^{t,\chi,\nu}_{\theta}}\in  \Vc(\theta) \Big\}.
\enqs 
\end{Theorem}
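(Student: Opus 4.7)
Let $\tilde\Vc(t)$ denote the right-hand side. I will prove $\Vc(t)\subset \tilde\Vc(t)$ and $\tilde\Vc(t)\subset \Vc(t)$ separately; the first is the easy flow-type inclusion, while the second requires a measurable-selection/concatenation argument and is the main obstacle.

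For $\Vc(t)\subset \tilde\Vc(t)$: fix $\mu\in \Vc(t)$ with a witness $(\chi,\nu)\in \Xb^2_t\x\Uc$, and set $Y:=X^{t,\chi,\nu}_\theta\in \Xb^2_\theta$. By strong uniqueness of \reff{eq: SDEMF} started at $\theta$, we have $X^{t,\chi,\nu}_T=X^{\theta,Y,\nu}_T$, so that $\P^B_{X^{\theta,Y,\nu}_T}\in G$ holds a.s. Then, recalling that $\P^B_Y(\omega^\circ)=\P^\i_{Y(\omega^\circ,\cdot)}$ by \reff{id-cond-law} and that Proposition \ref{prop: equivWS} allows one to characterize membership in $\Vc(\theta(\omega^\circ))$ independently of the particular choice of $\chi'$ with prescribed conditional law, we deduce $\P^B_Y(\omega^\circ)\in \Vc(\theta(\omega^\circ))$ for $\P^\circ$-a.e. $\omega^\circ$, hence $\mu\in \tilde\Vc(t)$.

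For $\tilde\Vc(t)\subset \Vc(t)$: pick $\mu\in\tilde\Vc(t)$ together with $(\chi,\nu)$ as in the definition, and let again $Y:=X^{t,\chi,\nu}_\theta$. For each $\omega^\circ\in\Omega^\circ$, set $\chi_{\omega^\circ}(\omega^\i):=Y(\omega^\circ,\omega^\i)\in \L_2(\Omega^\i,\Fc^\i,\P^\i;\R^d)$. By Proposition \ref{prop: equivWS}, the condition $\P^B_Y(\omega^\circ)\in \Vc(\theta(\omega^\circ))$ is equivalent to $(\theta(\omega^\circ),\chi_{\omega^\circ})\in \Gc$, which therefore holds for $\P^\circ$-a.e. $\omega^\circ$. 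Let $\mathfrak{P}$ be the push-forward of $\P^\circ$ under the map $\omega^\circ\mapsto (\theta(\omega^\circ),\chi_{\omega^\circ})$, and apply Lemma \ref{Lem-meas-selec} to obtain a measurable selector $\vartheta:\Gc\to\Uc$ with $\vartheta(t',\chi')\in \Uc_{t'}$ and $\P^B_{X^{t',\chi',\vartheta(t',\chi')}_T}\in G$ for $\mathfrak{P}$-a.e. $(t',\chi')$. Define $\nu':=\vartheta(\theta(\cdot),\chi_{\cdot})$ and the concatenation $\bar\nu:=\nu\1_{[0,\theta)}+\nu'\1_{[\theta,T]}$.

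The main obstacle is now to verify that $\bar\nu\in\Uc$ and that $\P^B_{X^{t,\chi,\bar\nu}_T}\in G$. The $\F$-progressive measurability of $\bar\nu$ uses that $\vartheta(t',\chi')\in\Uc_{t'}$ (so depends on $B$ only through its increments after $t'$, together with $\xi$), that $(\theta,\chi_{\cdot})$ is $\bar\F^\circ_\theta$-measurable, and that $\vartheta$ is measurable; composing yields that $\nu'_s$ is $\Fc_s$-measurable for $s\ge \theta$, and the two pieces glue along the $\Fc_\theta$-measurable random time $\theta$. For the conditional-law identity, the flow property of \reff{eq: SDEMF} gives $X^{t,\chi,\bar\nu}_T=X^{\theta,Y,\nu'}_T$, and Proposition \ref{prop: eds def condi omega 1} identifies $\P^B_{X^{\theta,Y,\nu'}_T}(\omega^\circ)$ with $\P^B_{X^{\theta(\omega^\circ),\chi_{\omega^\circ},\vartheta(\theta(\omega^\circ),\chi_{\omega^\circ})}_T}(\omega^\circ)$ for $\P^\circ$-a.e. $\omega^\circ$. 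By the defining property of $\vartheta$ and the choice of $\mathfrak{P}$, the latter belongs to $G$ for $\P^\circ$-a.e. $\omega^\circ$, so $\bar\nu$ witnesses $\mu\in\Vc(t)$. This completes the proof.
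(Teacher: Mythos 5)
Your Part~2 ($\tilde\Vc(t)\subset\Vc(t)$) is essentially identical to the paper's argument, with the same ingredients (Proposition \ref{prop: equivWS}, Lemma \ref{Lem-meas-selec}, concatenation of controls, and the identification of conditional laws via Proposition \ref{prop: eds def condi omega 1}) and at the same level of detail, so it is fine.

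Part~1, however, has a genuine gap, and it is a strange one since you flag this as ``the easy flow-type inclusion.'' After obtaining $\P^B_{X^{\theta,Y,\nu}_T}\in G$ a.s., you assert that it follows that $\P^B_Y(\omega^\circ)\in\Vc(\theta(\omega^\circ))$ for $\P^\circ$-a.e.\ $\omega^\circ$, citing only Proposition \ref{prop: equivWS}. But Proposition \ref{prop: equivWS} tells you that \emph{if} a measure already belongs to $\Vc(s)$, then any $\chi'$ with that conditional law works; it does not help you establish membership in the first place. To show $\P^B_Y(\omega^\circ)\in\Vc(\theta(\omega^\circ))$ for a fixed $\omega^\circ$ you must exhibit a pair $(\chi',\nu')\in\Xb^2_{\theta(\omega^\circ)}\times\Uc$ with $\P^B_{\chi'}=\P^B_Y(\omega^\circ)$ (a deterministic measure) and $\P^B_{X^{\theta(\omega^\circ),\chi',\nu'}_T}\in G$ a.s. The natural candidate is $\chi'=Y(\omega^\circ,\cdot)$ together with the control obtained from $\nu$ by freezing the Brownian past, $\nu'_s(\tilde\omega):=\nu_s(\omega^\circ\oplus_{\theta(\omega^\circ)}\tilde\omega^\circ,\tilde\omega^\i)$; verifying that this pair achieves the target requires the conditioning property (Proposition \ref{prop: eds def condi omega 1} together with \cite[Theorem 5.4]{claisse2013note}) and a Fubini-type disintegration of the a.s.\ constraint $\P^B_{X^{t,\chi,\nu}_T}\in G$ over the past and future parts of the Brownian path. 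This is exactly what the paper's proof of the first inclusion does, via the shifted pair $(\chi^{\tilde\omega^\circ},\nu^{\tilde\omega^\circ})$, and it is the step your argument omits. Without it, the passage from the global a.s.\ statement about $\P^B_{X^{\theta,Y,\nu}_T}$ to the $\omega^\circ$-wise membership in $\Vc(\theta(\omega^\circ))$ is unsupported.
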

\begin{proof} Denote by $\hat \Vc(t)$ the right hand side of the equality in Theorem \ref{Th-DDP}. 

\noindent{1.} We first prove the inclusion $\Vc(t)\subset\hat \Vc(t)$. Fix $\mu\in \Vc(t)$. Then, there exists  $(\chi,\nu)\in   \Xb^2_{t}\x \Uc$  and $\tilde \Omega^{\circ}\in \Fc^{\circ}$ such that  $\P^{\circ}(\tilde\Omega^{\circ})=1$, $\P^B_{\chi}=\mu$ and $\P^B_{X^{t,\chi,\nu}_{T}}\in  G$ on $\tilde \Omega^{\circ}$.   
For $\tilde \omega^{\circ}\in \tilde \Omega^{\circ}$, we   define $(\chi^{\tilde \omega^{\circ}},\nu^{\tilde \omega^{\circ}})$ by 
\beqs
\chi^{\tilde \omega^{\circ}}(\omega) = X^{t,\chi,\nu}_{\theta(\tilde\omega^{\circ})}(\tilde\omega^{\circ},\omega^{\i}) &,&
\nu_s^{\tilde \omega^{\circ}}(\omega) = \nu_s(\tilde\omega^{\circ}\oplus_{\theta(\tilde\omega^{\circ})}\omega^{\circ},\omega^{\i})\;,\quad s\in[0,T]
\enqs
for all $\omega=(\omega^{\circ},\omega^{\i})\in\Omega$. Note  that $\chi^{\tilde \omega^{\circ}}\in\Xb^2_{\theta(\tilde\omega^{\circ})}$, $\P^B_{\chi^{\tilde \omega^{\circ}}}=\P^B_{X_{\theta}^{t,\chi,\nu}}(\tilde \omega^{\circ})$ and $\nu^{\tilde \omega^{\circ}}\in\Uc$ for all $\tilde\omega^{\circ}\in\tilde \Omega^{\circ}$.
Moreover, it follows from  \cite[Theorem 5.4]{claisse2013note} and Proposition \ref{prop: eds def condi omega 1} that 
$
X_T^{\theta(\tilde \omega^{\circ}), \chi^{\tilde \omega^{\circ}},\nu^{\tilde \omega^{\circ}}}$ has the same law as   
$X_T^{t,\chi,\nu}$ given $B_{\cdot \wedge \theta}=\tilde \omega^{\circ}_{\cdot \wedge \theta(\tilde\omega^{\circ})}$, for $\P^{\circ}$-a.e.~$\tilde \omega^{\circ}\in \Omega^{\circ}$. Since $\P^{B}_{X_T^{t,\chi,\nu}}(\omega^{\circ})\in G$ for $\omega^{\circ}\in \tilde \Omega^{\circ}$, it follows that 
$\P^B_{X_{\theta}^{t,\chi,\nu}}(\tilde \omega^{\circ})$ $=$ $\P^B_{\chi^{\tilde \omega^{\circ}}}\in \Vc(\theta(\tilde \omega^{\circ}))$  
 for all $\tilde \omega^{\circ}\in \tilde\Omega^{\circ}$. Therefore $\mu\in \hat \Vc(t)$.
 
\noindent{2.} We now prove the inclusion $\hat\Vc(t)\subset \Vc(t)$.
Fix   $\mu\in \hat \Vc(t)$ and  $(\chi,\nu)\in   \Xb^2_{t}\x \Uc$ such that  $\P^B_{\chi}=\mu$ and  $\P^B_{X^{t,\chi,\nu}_{\theta}}\in \Vc(\theta)$. {It follows from  Proposition \ref{prop: equivWS}  that $\big(\theta(\omega^{\circ}), X^{t,\chi,\nu}_{\theta(\omega^{\circ})}(\omega^{\circ},.)\big)\in \Gc$, for $\P^{\circ}$-a.e.~$\omega^{\circ}\in \Omega^{\circ}$.} Let $\mathfrak{P}$ be the probability measure   induced by $\omega^{\circ}\mapsto \big(\theta(\omega^{\circ}), X^{t,\chi,\nu}_{\theta(\omega^{\circ})}(\omega^{\circ},.)\big)$ on $[0,T]\times\L_2(\Omega^{\i},\Fc^1,\P^{\i};\R^d)$.
{By   
Lemma \ref{Lem-meas-selec}}, there exists a measurable map $\vartheta$ such that
$ \P^B_{X^{t',\chi',{\vartheta(t',\chi')}}_{T}} \in G $ $\P^{\circ}$-a.s.~for $\mathfrak{P}$-a.e.~$(t',\chi')\in \Gc$. Since  $\vc(t',\chi')$ can be chosen in the filtration 
$\F_{[t',T]}$ to which $^{t'}\!B$ is independent, $\P^B_{X^{t',\chi',{\vartheta(t',\chi')}}_{T}}$  is measurable with respect to  $\sigma(B_{\cdot\vee t'}-B_{t'})$.  
Hence, there exist null sets $N$ and $\tilde N$ such that 
$$
\P^B_{X^{\alpha(\omega^{\circ},\cdot) }_{T}}(\tilde \omega^{\circ}) \in G~\;\; \mbox{ for $\omega^{\circ} \notin N$ and $\tilde \omega^{\circ}\notin \tilde N$,}
$$
where 
$$
\alpha(\omega^{\circ},\cdot):= (\theta(\omega^{\circ}),X^{t,\chi,\nu}_{\theta}(\omega^{\circ},\cdot),\vartheta(\theta(\omega^{\circ}),X^{t,\chi,\nu}_{\theta}(\omega^{\circ},\cdot)).
$$
It remains to define the process $\bar \nu\in \Uc$ by
\beq\label{eq: concat control}
\bar\nu(\omega) & = & \nu(\omega)\mathds{1}_{[0,\theta(\omega^{\circ}))}+\vartheta(\theta(\omega^{\circ}),X^{t,\chi,\nu}_{\theta}(\omega^{\circ},\cdot))(\omega) \mathds{1}_{[\theta(\omega^{\circ}),T]}\;, 
\enq
and observe that  $X^{{\alpha}}_{T}=X^{{t,\chi,\bar \nu}}_{T}$, to conclude that   $\mu\in \Vc(t)$.
%
\end{proof}

\section{The dynamic programming partial differential equation}
\label{PDEdyn}
Let $\vr:~[0,T]\times\Pc_2\rightarrow \R$ be the indicator function of the complement of the reachability set $\Vc$:
\beq\label{defv}
\vr(t,\mu) & = & 1-\mathds{1}_{\Vc(t)}(\mu)\;,\quad (t,\mu)\in[0,T]\times\Pc_2.
\enq
The aim of this section is to provide a characterization of   $\vr$ as a (discontinuous) viscosity solution of a fully non-linear second order parabolic partial differential equation, in the spirit of \cite{soner2002dynamic}. Given Theorem \ref{Th-DDP}, this follows from combining the technologies developped in   \cite{carda12,CCD15}  and  \cite{soner2002dynamic}. We refer to Section \ref{sec: other formulation} for the specific case where the reachability set is an half-space in one direction.  

\subsection{Derivatives on the space of probability measures and It\^{o}'s lemma}\label{sec: Ito}

We first recall here   the notion of derivative with respect to a probability measure that has been introduced by Lions, see the lecture notes \cite{carda12}, and further developed in \cite{CCD15}, to our context.

We let $\tilde \Omega^{\i}$ be a polish space, $\tilde \Fc^{\i}$ its Borel $\sigma$-algebra and $\tilde \P^{\i}$ an atomless probability measure on $(\tilde \Omega^{\i},\tilde \Fc^{\i})$. We recall that we have $\Pc_2=\{{\tilde \P^{1}_{Y}}:=\tilde \P^{\i}\circ Y^{-1}~:~Y\in \L_{2}( \tilde\Omega^{\i}, \tilde\Fc^{\i},\tilde \P^{\i};\R^d)\}$. 

For a function $w:~\Pc_2\rightarrow\R$, we define its lifting as the function $W$ from $\L_{2}( \tilde\Omega^{\i}, \tilde\Fc^{\i}, \tilde\P^{\i};\R^{d})$ to $\R$ such that
\beqs
W(X) & = & w(\tilde\P^{\i}_X)\;,\quad \mbox{ for all }X\in \L_{2}(\tilde \Omega^{\i}, \tilde\Fc^{\i}, \tilde\P^{\i};\R^d)\,.
\enqs
 
We then say that $w$ is Fr\'echet differentiable (resp.~$\Cc^{1}$) on $\Pc_2$ if its lift $W$ is (resp.~continuously) Fr\'echet differentiable  on $\L_{2}(\tilde \Omega^{\i}, \tilde\Fc^{\i},\tilde \P^{\i};\R^{d})$.
If it exists, the Fr\'echet derivative $D W(X)$ of $W$ at   $X\in \L_{2}( \tilde\Omega^{\i}, \tilde\Fc^{\i}, \tilde\P^{\i};\R^{d})$ can be identified by Riez Theorem to an element of  $\L_{2}( \tilde\Omega^{\i}, \tilde\Fc^{\i}, \tilde\P^{\i};\R^{d})$   and admits a representation of the form 
\beq\label{eq: DW}
D W(X) & = & \partial_\mu w (\tilde\P^{\i}_X)(X)
\enq
for some  measurable map $\partial_\mu w ({\tilde \P^{\i}_X}):~\R^d\rightarrow\R^d$, that we call the derivative of $w$ at ${\tilde \P^{\i}_X}$ and we have $\partial_\mu w(\mu)\in \L_{2}(\R^d,\Bc(\R^d),\mu;\R^d)$ for  $\mu\in\Pc_2$.  In the case where $x\in \R^{d}\mapsto \partial_\mu w (\mu)(x)$ is differentiable at $x$, given $\mu \in \Pc_{2}$,  we denote by $\partial_{x}\partial_\mu w (\mu)(x)$ the corresponding gradient. 
 
Following \cite[Section 3.1]{CCD15}, we say that $w$  is fully $\Cc^2$ if it is $\Cc^1$ on $\Pc_2$ and
\begin{itemize}
\item the map $(\mu,x) \mapsto \partial_\mu w (\mu)(x)$ is continuous at any $(\mu,x)\in \Pc_2\times\R^d$,

\item for any $\mu\in \Pc_2$, the map $x \mapsto \partial_\mu w (\mu)(x)$ is continuously differentiable and the map $(\mu,x) \mapsto \partial_x\partial_\mu w (\mu)(x)$ is continuous at any $(\mu,x)\in \Pc_2\times\R^d$,  
\item for any $x\in\R^d$, the map $\mu\mapsto\partial_\mu w(\mu)(x)$ is differentiable in the lifted sense and its derivative,    regarded as the map $(\mu,x,x')\mapsto \partial^2_\mu w(\mu)(x,x')$, is continuous at any $(\mu,x,x')\in \Pc_2\times\R^d\times\R^d$.
\end{itemize}

From now on, we define $\Cc^{1,2}([0,T]\times \Pc_2)$
as the set of continuous functions $w:~[0,T]\times \Pc_2\rightarrow\R$ such that $w(t,\cdot)$ is fully $\Cc^{2}$ for all $t\in[0,T]$,  $\partial_t w$ exists and is continuous on $[0,T]\times \Pc_2$, $\partial_\mu w$, $\partial_x\partial_\mu w$ and $\partial^2_\mu w$ are continuous respectively on $[0,T]\times\Pc_2\times\R^d$, $[0,T]\times\Pc_2\times\R^d$ and $[0,T]\times\Pc_2\times\R^d\times\R^d$. We also define  $\Cc^{1,2}_{b}([0,T]\times \Pc_2)$ as the set of functions $w\in \Cc^{1,2}([0,T]\times\Pc_2)$ such that 
\beq\label{eq: cond inetgra derivee w time}
\sup_{t\in[0,T],\;\mu\in\Kc} \Big\{\big|\partial_t w(t,\mu)\big|+\int_{\R^d}\hspace{-1mm}\big|\partial_\mu w(t,\mu)(x)\big|^{2}d \mu(x) \quad\quad& & \nonumber \\
+\int_{\R^d}\big|\partial_x\partial_\mu w(t,\mu)\big|^2{d\mu(x)}\quad\quad\quad & & \nonumber\\
 + \int_{\R^d\times\R^d}|\partial^2_\mu w(t,\mu)(x,x')\big|^2d(\mu\otimes\mu)(x,x') \Big\} &  < &  \infty
\enq
for any compact subset $\Kc$ of $\Pc_2$.

\vspace{2mm}

We {are} now {in position to} derive a chain rule for the flow of conditional marginal laws of the controlled process. To this end, we introduce  the probability space  $(\tilde \Omega,\tilde \Fc,\tilde \P)$ defined by
\beq\label{def Omega tilde}
\tilde \Omega=\Omega^{\circ}\times\tilde \Omega^{\i}\;,\quad \tilde \Fc=\Fc^{\circ}\otimes\tilde\Fc^{\i} & \mbox { and } & \tilde \P=\P^{\circ}\otimes\tilde \P^{\i}. 
\enq
 As for the space $(\Omega,\Fc,\P)$, we denote by $\tilde\E_{B}$ the regular conditional expectation given $B$ on $(\tilde \Omega,\tilde \Fc,\tilde \P)$.

 \begin{Proposition}\label{prop: chain-rule}  Let $w\in \Cc^{1,2}_{b}([0,T]\times \Pc_2)$. Given $(t,\chi,\nu)\in [0,T]\x \Xb_{t}\x \Uc$, set $X=X^{t,\chi,\nu}$, $a=a(X_{},\P^{B}_{X_{}},\nu_{})$ and $b=b(X_{},\P^{B}_{X_{}},\nu_{})$. Then, 
\begin{align}
w(s,\P_{X_{s}}^{B})  &=  w(t,\P_{\chi}^{B})\nonumber\\
&+ \int_{t}^{s} \E_{B}\left[\partial_{t}w (r,\P_{X_{r}}^{B})+\partial_{\mu}w (r,\P_{X_{r}}^{B})(X_{r}) b_{r}\right] dr\nonumber\\
&  +\frac1{2} \int_{t}^{s} \E_{B}\left[\Tr\left(\partial_{x}\partial_{\mu}w (r,\P_{X_{r}}^{B})(X_{r}) a_{r} a^{\top}_{r}\right)\right] dr \nonumber\\
& +\frac{1}{2}  \int_{t}^{s} \E_{B}\left[\tilde\E_{B}\left[\Tr\left( \partial^{2}_{\mu}w (r,\P_{X_{r}}^B)(X_{r},\tilde X_{r}) a^{}_{r}\tilde a_{r}^{\top}\right)\right]\right] dr\nonumber\\
& + \int_{t}^{s} \E_{B}\left[\partial_{\mu}w (r,\P_{X_{r}}^{B})(X_{r}) a_{r}(X_{r},\P^{B}_{X_{r}},\nu_{r}) )\right] dB_{r} 
\nonumber
\end{align}
for all $s\in[t,T]$, where\footnote{{This means that $(\tilde X,\tilde a)(\omega^{\circ},\cdot)$, defined on $\tilde \Omega^{\i}$, has the same law as  $(X,a)(\omega^{\circ},\cdot)$, defined on $\Omega^{\i}$, for a.e.~$\omega^{\circ}\in \Omega^{\circ}$.}} $(\tilde X,\tilde a)$ is a copy of $(X,a)$ on $(\tilde \Omega,\tilde \Fc,\tilde \P)$.  
\end{Proposition}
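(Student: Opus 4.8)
The plan is to prove the chain rule by first establishing it for the associated lifted process and then transferring the result to the conditional-law flow, using the $\omega^\circ$-by-$\omega^\circ$ representation of the SDE provided by Proposition \ref{prop: eds def condi omega 1}. More precisely, fix $(t,\chi,\nu)$ and write $X=X^{t,\chi,\nu}$. By Proposition \ref{prop: eds def condi omega 1}, for $\P^\circ$-a.e.\ $\omega^\circ\in\Omega^\circ$ the map $\omega^{\i}\mapsto X_\cdot(\omega^\circ,\omega^{\i})$ solves, on $\Omega^{\i}$, an SDE driven by $B$ whose coefficients involve the \emph{frozen} measure argument $\P^B_{X_r}(\omega^\circ)$; treating $\omega^\circ$ as a parameter, $X(\omega^\circ,\cdot)$ is an It\^o process with drift $b_r(\omega^\circ,\cdot)$ and diffusion $a_r(\omega^\circ,\cdot)$ in the sense of the filtration on $\Omega^{\i}$ generated by $B$ and $\xi$. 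The key observation is that $\P^B_{X_s}=\tilde\P^{\i}_{X_s(\omega^\circ,\cdot)}$ by the identification \reff{id-cond-law}, so that $w(s,\P^B_{X_s})(\omega^\circ)=W_s(\omega^\circ):=w(s,\tilde\P^{\i}_{X_s(\omega^\circ,\cdot)})$, and we are reduced to an It\^o formula for $W$ along the flow of laws of a finite-dimensional It\^o process.

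The second step is to apply the It\^o formula on the Wasserstein space from \cite[Section 3]{CCD15} (the ``full $\Cc^2$'' chain rule for flows of marginal laws of It\^o processes), conditionally on $\omega^\circ$. For a fixed $\omega^\circ$, $X(\omega^\circ,\cdot)$ is an It\^o process on $(\Omega^{\i},\Fc^{\i},\P^{\i})$ — more precisely on the product carrying $\xi$ and the (conditioned) Brownian increments — with integrable coefficients thanks to the boundedness of $(b,a)$ and \reff{eq: X carre int}; the hypothesis $w\in\Cc^{1,2}_b$ supplies exactly the regularity and the integrability bound \reff{eq: cond inetgra derivee w time} on the compact set $\{\P^B_{X_r}(\omega^\circ):r\in[t,s]\}$ (which is compact because $r\mapsto\P^B_{X_r}(\omega^\circ)$ is continuous, by the $D_t$-continuity argument in Step 2.a of Proposition \ref{prop: existence unicite eds}) needed to justify the interchange of expectations and the finiteness of every term. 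This yields, for $\P^\circ$-a.e.\ $\omega^\circ$, the claimed identity with $\E_B$ and $\tilde\E_B$ read as expectations over $\Omega^{\i}$ and an independent copy $\tilde\Omega^{\i}$, and with the stochastic integral $\int_t^s \partial_\mu w(\cdots)\,dB_r$ arising from the martingale part of the conditioned dynamics. Here $(\tilde X,\tilde a)$ on $(\tilde\Omega,\tilde\Fc,\tilde\P)$ is precisely the ``conditionally independent copy'' of $(X,a)$ that appears in the $\partial^2_\mu$ term of the Wasserstein It\^o formula.

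The third step is to recombine the conditional statements into an identity valid $\P$-a.s.\ on $\Omega$: all five integrands on the right-hand side are $\F^\circ$-measurable (being conditional expectations given $B$, by \reff{id-cond-exp}), the stochastic integral against $B$ is well-defined on $(\Omega,\Fc,\P)$ because its integrand is square-integrable in $(r,\omega)$ (again using the bound \reff{eq: cond inetgra derivee w time} and boundedness of $a$), and a version of the conditioned stochastic integral coincides $\P$-a.s.\ with the genuine It\^o integral $\int_t^s\partial_\mu w(r,\P^B_{X_r})(X_r)a_r\,dB_r$ — this last point can be checked first for simple integrands and then by the usual $\Lc^2$-approximation, exactly as in the measurability discussion around Proposition \ref{prop: eds def condi omega 1}.

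I expect the main obstacle to be the careful bookkeeping in Step 3, namely verifying that the ``conditional-on-$\omega^\circ$'' It\^o integral produced by the Wasserstein chain rule can legitimately be identified with the global It\^o integral $\int_t^s\partial_\mu w(r,\P^B_{X_r})(X_r)a_r\,dB_r$ on $(\Omega,\Fc,\P)$; this requires knowing that the conditioning of $X$ on $B$ is compatible with the stochastic-integral construction (which is where Proposition \ref{prop: eds def condi omega 1} and the measurable-selection machinery for $\omega^{\i}\mapsto X^{Q,\omega^{\i}}$ are essential), together with controlling the measurability of all the derivative terms jointly in $(\omega^\circ,\omega^{\i})$ so that Fubini applies. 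The regularity/integrability side is routine given $w\in\Cc^{1,2}_b$ and the boundedness of $(b,a)$; the genuinely delicate part is the interchange of conditioning and It\^o integration.
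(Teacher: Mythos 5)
There is a genuine gap, and it sits at the core of your argument rather than in the ``bookkeeping'' you flag in Step 3. You propose to apply the deterministic Wasserstein chain rule of \cite{CCD15} ``conditionally on $\omega^{\circ}$'', treating $\omega^{\i}\mapsto X_\cdot(\omega^{\circ},\omega^{\i})$ as an It\^o process on $\Omega^{\i}$. But once the Brownian path is frozen there is no driving noise left: on $(\Omega^{\i},\Fc^{\i},\P^{\i})$ the only randomness is $\xi$ (entering through $\chi$ and $\nu$), the map $\omega^{\i}\mapsto\int_t^r a_u\,dB_u(\omega^{\circ},\omega^{\i})$ is not a stochastic integral with respect to any Brownian motion on $\Omega^{\i}$, and stochastic integrals cannot be defined pathwise in $\omega^{\circ}$ anyway. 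Consequently $r\mapsto\P^{B}_{X_r}(\omega^{\circ})=\P^{\i}_{X_r(\omega^{\circ},\cdot)}$ is \emph{not} the flow of marginal laws of an It\^o process in the sense required by the deterministic chain rule; if that chain rule did apply $\omega^{\circ}$-by-$\omega^{\circ}$ it would produce only $dr$-terms, i.e.\ it would say $r\mapsto w(r,\P^{B}_{X_r})$ is absolutely continuous, which is false — the whole point of the statement is that this flow is a genuine semimartingale with a nontrivial $dB$-integral, and that term cannot be recovered by a conditional/pathwise application of the unconditional formula without assuming what is to be proved. You also invoke Proposition \ref{prop: eds def condi omega 1} backwards: that proposition conditions on $\omega^{\i}$ (it solves the equation for each fixed initial-randomness coordinate, driven by $B$), not on $\omega^{\circ}$, so it does not supply the representation your Step 1 relies on.

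For comparison, the paper's proof avoids any per-$\omega^{\circ}$ It\^o calculus. It uses Proposition \ref{prop: eds def condi omega 1} in the correct direction to build, on an enlarged space, copies $X^{\ell}$ driven by the \emph{same} $B$ but with i.i.d.\ copies $\xi^{\ell}$ of $\xi$, so that $(X^{\ell}_r)_{\ell\ge1}$ are conditionally i.i.d.\ given $B$; it applies the finite-dimensional It\^o formula to $w(s,\bar\mu^N_s)$ for the empirical measure $\bar\mu^N$ via the empirical-projection result \cite[Proposition 3.1]{CCD15}; it then takes conditional expectation given $B$ (the interchange of conditional expectation and the $dB$-integral being justified by the cited results of Liptser--Shiryaev and Kurtz, and exchangeability collapsing the sums), and passes to the limit $N\to\infty$ using the conditional law of large numbers $\Wc_2(\bar\mu^N_r,\P^{B}_{X^1_r})\to0$; finally a mollification step relaxes the extra smoothness to general $w\in\Cc^{1,2}_b$ using \reff{eq: cond inetgra derivee w time}. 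This particle-approximation route is what actually generates the $dB$-term in the formula, and it is the ingredient your proposal is missing.
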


\proof The proof follows from similar arguments as in \cite{CCD15} and we only mention the main ideas. 

We first define on $\tilde \Omega^{\i}$ a sequence of $i.i.d.$ random variables $(\xi^{\ell})_{\ell\ge 2}$ following the uniform law on $[0,1]^d$ (such a sequence exists since $\tilde \Omega^{\i}$ is polish and $\tilde \P^{\i}$ is atomless). 
 We then extend $B$, $\xi$ and $\xi^\ell$, $\ell\geq 2$ to $(\hat \Omega =\Omega^\circ\times \Omega^{\i}\times \tilde \Omega^{\i},\hat \Fc=\Fc^\circ\otimes\Fc^{\i}\otimes\tilde \Fc^{\i},\hat\P=\P^\circ\otimes\P^{\i}\otimes\tilde \P^{\i})$ in a canonical way by setting 
\beqs
\xi^{1}(\hat\omega)=\xi(\hat \omega)=\omega^{\i}\,,\;\xi^\ell(\hat\omega)  =  \xi^\ell(\tilde \omega^{\i})\;\mbox{ and } \;
B(\hat\omega)  & = & \omega^{\circ},
\enqs
for all $\hat\omega=(\omega^\circ,\omega^{\i},\tilde \omega^{\i})$. Note that $(\xi^{\ell})_{\ell \geq 1}$ is then an $i.i.d.$ sequence,  independent of $B$.

Since $\chi \in \Xb_{t}$ and $\nu\in \Uc$, we can find Borel maps $\xr$ and $\ur$ such that  $\chi=\xr(B, \xi^1)$ $\P$-a.s. and $\nu=\ur(\cdot, ^{\cdot}B, \xi^1)$, up to modification. 
We then set  $(\chi^{\ell},\nu^{\ell}):=(\xr(\xi^\ell), \ur(\cdot, ^{\cdot}B, \xi^{\ell}))$, for $\ell\ge 1$, 
and  define $X^{\ell}$ as the solution on $[t,T]$ of 
\begin{align*}
X^{\ell} =& \chi^{\ell} +\int_{  t}^{  \cdot} b^\ell_{s} ds
 + \int_{t}^{  \cdot} a^\ell_{s} dB_{s} ,
\end{align*}
in which  $(b^{\ell},a^{\ell})$ $=$ $(b,a)(X^{\ell}, \P_{X^{1}}^B, \nu^{\ell})$. It follows from Proposition \ref{prop: eds def condi omega 1} that   $(X^{\ell}_{r})_{\ell \ge 1}$ is a sequence of i.i.d.~random variables given $(B_{r'})_{r'\leq T}$, for each $ r \in[t, s]$. 
Set $\bar \mu^{N}_{r}:=\frac1N \sum_{\ell=1}^{N} \delta_{X^{\ell}_{r}}$ for $t\le r\le s$.

\noindent {1.} We first assume that $w\in\Cc_b^{1,2}([0,T]\times\Pc_2)$ is such that 
\beqs
(\mu,x,x')\mapsto(\partial_\mu w(\mu)(x),\partial_x\partial_\mu w(\mu)(x),\partial^{2}_\mu w(\mu)(x,x'))
\enqs
 is continuous,  and that
 $w$, $\partial_{\mu}w$, $\partial_{x}\partial_{\mu}w$ and $\partial^{2}_{\mu}w$ are bounded and uniformly continuous.
Then, it follows from \cite[Proposition 3.1]{CCD15} combined with It\^{o}'s Lemma that  
\beqs
w(s,\bar \mu^{N}_{s}) & = & w(t,\bar \mu^{N}_{t}) + \int_{t}^{s} \partial_{t}w (r,\bar \mu^{N}_{r}) dr
 +\frac1N \sum_{\ell=1}^{N} \int_{t}^{s} \partial_{\mu}w (r,\bar \mu^{N}_{r})(X^{\ell}_{r}) b^{\ell}_{r} dr\\
& & +\frac1N \sum_{\ell=1}^{N} \int_{t}^{s} \partial_{\mu}w (r,\bar \mu^{N}_{r})(X^{\ell}_{r}) a^{\ell}_{r}  dB_{r}
\\
& &
 +\frac1{2N} \sum_{\ell=1}^{N} \int_{t}^{s} \Tr\left[\partial_{x}\partial_{\mu}w (r,\bar \mu^{N}_{r})(X^{\ell}_{r}) a^{\ell}_{r}(a^{\ell }_{r})^{\top}\right] dr\\
& &+\frac1{2N^{2}} \sum_{\ell,n=1}^{N} \int_{t}^{s} \Tr\left[ \partial^{2}_{\mu}w (r,\bar \mu^{N}_{r})(X^{\ell}_{r},X^{n}_{r}) a^{\ell}_{r}(a^{n }_{r})^{\top}\right] dr.
\enqs
We now take the expectation given $(B_{r'})_{r'\leq T}$ on both sides and use \cite[Corollaries 2 and 3 of Theorem 5.13]{lip01} and \cite[Lemma 14.2]{kur} together with the fact that the quadruplets $(\bar \mu^{N}_{r},X^{\ell}_{r},X^{n}_{r},b^{\ell}_{r},b^{n}_{r},a^{\ell}_{r},a^{n}_{r})$, ${\ell,n \le N}$, have all the same law given $(B_{r'})_{r'\leq T}$, for $t\le r\le s$,  to obtain 
\beqs
\hat\E_{B}[w(s,\bar \mu^{N}_{s})] & = & \hat\E_{B}[w(t,\bar \mu^{N}_{t})]+  \int_{t}^{s} \hat\E_{B}\left[\partial_{t}w (r,\bar \mu^{N}_{r}) +\partial_{\mu}w (r,\bar \mu^{N}_{r})(X^{1}_{r}) b^{1}_{r}\right] dr\\
 & &+  \int_{t}^{s} \hat\E_{B}\left[\partial_{\mu}w (r,\bar \mu^{N}_{r})(X^{1}_{r}) a^{1}_{r} )\right] dB_{r}\\
& &+\frac1{2} \int_{t}^{s} \hat\E_{B}\left[\Tr\left(\partial_{x}\partial_{\mu}w (r,\bar \mu^{N}_{r})(X^{1}_{r}) a^{1}_{r}(a^{1}_{r})^{\top}\right)\right] dr\\
& &+\frac1{2N }  \int_{t}^{s} \hat\E_{B}\left[\Tr\left( \partial^{2}_{\mu}w (r,\bar \mu^{N}_{r})(X^{1}_{r},X^{1}_{r}) a^{1}_{r}(a^{1}_{r})^{\top}\right)\right] dr
\\
& & +\frac{N-1}{2N }  \int_{t}^{s} \hat\E_{B}\left[\Tr\left( \partial^{2}_{\mu}w (r,\bar \mu^{N}_{r})(X^{1}_{r},X^{2}_{r}) a^{1}_{r}(a^{2}_{r})^{\top}\right)\right] dr,
\enqs
where $\hat\E_{B}$ stands for the condition expectation given  $(B_{r'})_{r'\leq T}$ on $\hat \Omega$.
We then use the fact that $\Wc_2(\bar \mu^{N}_{r},\P^B_{X^{1}_{r}})\to0 $ a.s. as $N\to \infty$ for all $ r\in[t, s]$. This is a consequence of \cite[Lemma 4]{JMW10} and the fact that $(X^{\ell}_{r})_{\ell \ge 1}$ is a sequence of i.i.d.~random variables given $(B_{r'})_{r'\leq T}$. 
Since all the involved maps are assumed to be bounded and continuous, one can take the limit as $N\to \infty$ in the above  to obtain
\beq
w(s,\P_{X^{1}_{s}}^B) & = & w(t,\P_{\chi^{1}}^B)+ \int_{t}^{s} \E_{B}\left[\partial_{t}w (r,\P_{X^{1}_{r}}^B)+\partial_{\mu}w (r,\P_{X^{1}_{r}}^B)(X^{1}_{r}) b^{1}_{r}\right] dr\nonumber\\
 & &+ \int_{t}^{s} \E_{B}\left[\partial_{\mu}w (r,\P_{X^{1}_{r}}^B)(X^{1}_{r}) a^{1}_{r} )\right] dB_{r} \label{eq: Ito proof}\\
& & +\frac1{2} \int_{t}^{s} \E_{B}\left[\Tr\left(\partial_{x}\partial_{\mu}w (r,\P_{X^{1}_{r}}^B)(X^{1}_{r}) a^{1}_{r}(a^{1}_{r})^{\top}\right)\right] dr\nonumber
\\
& & +\frac{1}{2}  \int_{t}^{s} \E_{B}\left[\tilde \E_{B}\left[\Tr\left( \partial^{2}_{\mu}w (r,\P_{X^{1}_{r}}^B)(X^{1}_{r},X^{2}_{r}) a^{1}_{r}(a^{2}_{r})^{\top}\right)\right]\right] dr.\nonumber
\enq
\noindent{2.} The validity of \reff{eq: Ito proof} can be extended to the case where $w$ is just in $\Cc^{1,2}_{b}([0,T]\times\Pc_2)$ by following the molifying argument of \cite[Proposition 3.4]{CCD15} whenever the condition  \reff{eq: cond inetgra derivee w time}  holds, recall that $(b,a)$ is bounded.
\ep

\vspace{2mm}

Later on, we shall need to use this It\^{o}'s formula at the level {of a map $W$  defined on $ \L_{2}(\tilde \Omega^{\i}, \tilde\Fc^{\i},\tilde\P^{\i};\R^{d})$}.  {When $W$ is the lift of a $\Cc^{1,2}_{b}$ function $w$, and under} the additional assumption that $W$ is twice continuously Fr\'echet differentiable\footnote{{Being  $\Cc^{1,2}_{b}$ for the function $w$ is not a sufficient condition for the lift $W$ to be twice Fr\'echet differentiable as shown  in \cite[Example 2.3]{BLPR17}. }}, $D^2W$ can be identified by Riez Theorem as a self-adjoint operator on $\L_{2}( \tilde \Omega^{\i},$ $ \tilde \Fc^{\i},\tilde \P^{\i};\R^d)$ and we have  
the following identification by \cite[Remark 6.4]{CD14} 
\beq\label{eq:id2ndder}
  \tilde \E^{\i}\left[D^2 W(X)(Y)Y^{\top}\right] & = &  \tilde \E^{\i}\left[ \Tr\left(\partial_x \partial_\mu w(\mu)(X)YY^\top\right)\right]\\
   & & + \tilde \E^{\i}\left[\tilde \E'^{\i}\left[\Tr\left( \partial^{2}_{\mu}w (\mu)(X,X') Y(Y')^\top\right)\right]\right] dr\nonumber
\enq
for any random variables $X\in \L_{2}(\tilde \Omega^{\i}, \tilde\Fc^{\i},\tilde\P^{\i};\R^{d})$ with $\tilde \P^{\i}_{X}=\mu$ and  $Y\in \L_{2}(\tilde \Omega^{\i},\tilde \Fc^{\i},\tilde\P^{\i};\R^{d})$, where $(X',Y')$ is a copy of $(X,Y)$ on another Polish atomless probability space $ (\tilde\Omega'^{\i},\tilde \Fc'^{\i},\tilde\P'^{\i})$, and $\tilde \E'^{\i}$ is the expectation operator under $\tilde\P'^{\i}$.

{Let us say} that $W:[0,T]\times \L_{2}( \tilde \Omega^{\i},\tilde  \Fc^{\i}, \tilde \P^{\i};\R^{d})\to \R$ is $\Cc^{1,2}_{b}$  if it is the lifting function of a map $w\in \Cc^{1,2}_{b}([0,T]\times \Pc_2)$. Given a random variable $X\in \Lb_{2}(\tilde\Omega,\tilde\Fc,\tilde\P;\R^{d})$ (recall that $(\tilde \Omega, \tilde \Fc, \tilde \P)$ is defined in \reff{def Omega tilde}), we define $W(t,X)$ as the random variable $\omega^{\circ}\in \Omega^{\circ}\mapsto W(t,X(\omega^{\circ},\cdot))$ where $X(\omega^{\circ},\cdot)$ is now a random variable on $\L^{2}(\tilde \Omega^{\i},\tilde \Fc^{\i},\tilde \P^{\i};\R^{d})$. We use the same convention for $DW(t,X(\omega^{\circ},\cdot))$ and $D^{2}W(t,X(\omega^{\circ},\cdot))$.  For $(t,\chi,\nu)\in [0,T]\times\Xb_t\times \Uc$, we introduce $\tilde \chi, \tilde \nu$ copies of $ \chi, \nu$ defined on $\tilde \Omega$ and we define the process $\tilde X$ on  $\tilde \Omega$ solution  to \reff{eq: SDEMF} with initial conditions $(t,\tilde \chi)$ and control $\tilde \nu$.
 As an immediate corollary of Proposition \ref{prop: chain-rule} and \reff{eq:id2ndder}, we then have the following: 
\begin{align}
W(s, {\tilde X_{s}} )  &=  W(t,{\tilde \chi})\nonumber\\
&+ \int_{t}^{s} \tilde \E_{B}\left[\partial_{t}W (r,{\tilde X_{r}})+DW (r,{\tilde X_{r}}) b_{r}(\tilde X_{r},\tilde \P^{B}_{X_{r}},\tilde \nu_{r})\right] dr\nonumber\\
&  +\frac1{2} \int_{t}^{s} \tilde \E_{B}\left[D^{2}W (r,\tilde X_{r})(X_{r}) a_{r}a_{r}^{\top}(\tilde X_{r},\tilde \P^{B}_{\tilde X_{r}},\tilde \nu_{r})\right] dr\nonumber\\
 & + \int_{t}^{s} \tilde \E_{B}\left[DW (r,\tilde X_{r}) a_{r}(\tilde X_{r},\tilde \P^{B}_{\tilde X_{r}},\tilde \nu_{r}) )\right] dB_{r}, \label{eq: Ito on W}
\end{align}
for all $s\in[0,T]$,  whenever  $W$ is in   
$\Cc^{1,2}_{b}\cap C^{1,2}([0,T]\x\L_{2}(\tilde  \Omega^{\i}, \tilde \Fc^{\i}, \tilde \P^{\i};\R^{d}))$.

This result is in fact true even when $W$ is not necessarily the lift of a law-invariant map, but simply $C^{1,2}([0,T]\x\L_{2}(\tilde  \Omega^{\i}, \tilde \Fc^{\i}, \tilde \P^{\i};\R^{d}))$.

\begin{Proposition}\label{RK-nonlif-CR} Fix $W\in  C^{1,2}([0,T]\x\L_{2}(\tilde  \Omega^{\i}, \tilde \Fc^{\i}, \tilde \P^{\i};\R^{d}))$, then \eqref{eq: Ito on W} holds.
\end{Proposition}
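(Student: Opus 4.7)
The plan is to apply the standard It\^{o} formula for $C^{1,2}$ functions on a Hilbert space, viewing the $\tilde\omega^{\i}$-variable of $\tilde X$ as a coordinate in the Hilbert space $H := \L_{2}(\tilde \Omega^{\i}, \tilde \Fc^{\i}, \tilde \P^{\i}; \R^{d})$, so that $\tilde X$ becomes a continuous $H$-valued semimartingale driven by the $d$-dimensional Brownian motion $B$ on $\Omega^{\circ}$. First, I would check that, up to a $\P^{\circ}$-null set, the map $s \in [t,T] \mapsto \tilde X_{s}(\omega^{\circ},\cdot)$ is a continuous $H$-valued semimartingale satisfying the $H$-valued SDE
\[
\tilde X_{s} = \tilde \chi + \int_{t}^{s} \mathbf{b}_{r}\, dr + \int_{t}^{s} \mathbf{a}_{r}\, dB_{r},
\]
where $\mathbf{b}_{r}(\omega^{\circ})(\tilde\omega^{\i}) := b_{r}\bigl(\tilde X_{r}(\omega^{\circ},\tilde\omega^{\i}), \tilde \P^{B}_{\tilde X_{r}}(\omega^{\circ}), \tilde \nu_{r}(\omega^{\circ},\tilde\omega^{\i})\bigr)$ belongs to $H$ for a.e.~$\omega^{\circ}$, and similarly for each column $\mathbf{a}^{i}_{r}$. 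The boundedness of $(b,a)$ together with the estimate \eqref{eq: X carre int} applied on $(\tilde \Omega, \tilde \Fc, \tilde \P)$ ensure that $\mathbf{b}$ and $\mathbf{a}$ have the square-integrability required to define the $H$-valued stochastic integral.

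Second, since $W \in C^{1,2}([0,T] \times H)$ and the driving noise $B$ is finite-dimensional, the classical It\^{o} formula for $C^{1,2}$ functions on a Hilbert space (applied coordinate by coordinate in $B$) yields, $\P^{\circ}$-a.s.,
\[
W(s,\tilde X_{s}) - W(t,\tilde\chi) = \int_{t}^{s} \partial_{t} W(r,\tilde X_{r})\, dr + \int_{t}^{s} \langle DW(r,\tilde X_{r}), \mathbf{b}_{r}\rangle_{H}\, dr
\]
\[
+ \int_{t}^{s} \langle DW(r,\tilde X_{r}), \mathbf{a}_{r}\rangle_{H}\, dB_{r} + \frac{1}{2} \int_{t}^{s} \sum_{i=1}^{d} \langle D^{2} W(r,\tilde X_{r}) \mathbf{a}^{i}_{r}, \mathbf{a}^{i}_{r}\rangle_{H}\, dr,
\]
in which $DW(r,\tilde X_{r})$ is identified with an element of $H$ via Riesz's theorem and $D^{2}W(r,\tilde X_{r})$ with a self-adjoint operator on $H$.

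Third, I would identify the Hilbert-space inner products with conditional expectations given $B$. Namely, by the identification \eqref{id-cond-exp} transposed to $(\tilde \Omega, \tilde \Fc, \tilde \P)$, for each $\omega^{\circ}$ one has $\langle Y(\omega^{\circ},\cdot), Z(\omega^{\circ},\cdot)\rangle_{H} = \tilde \E^{\i}[Y(\omega^{\circ},\cdot) Z(\omega^{\circ},\cdot)] = \tilde \E_{B}[Y Z](\omega^{\circ})$, and similarly $\sum_{i} \langle D^{2} W(r,\tilde X_{r}) \mathbf{a}^{i}_{r}, \mathbf{a}^{i}_{r}\rangle_{H} = \tilde \E_{B}\bigl[D^{2}W(r,\tilde X_{r})(\mathbf{a}_{r}) \mathbf{a}_{r}^{\top}\bigr]$ in the notation of \eqref{eq: Ito on W}. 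Substituting these in the previous display produces exactly \eqref{eq: Ito on W}.

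The main subtlety is only bookkeeping: one must check that the $H$-valued drift and diffusion coefficients of $\tilde X$ are well-defined pathwise in $\omega^{\circ}$ and measurable enough to apply the Hilbert-space It\^{o} formula. This follows directly from continuity and boundedness of $(b,a)$ together with Proposition \ref{prop: eds def condi omega 1}, which yields a version of $\tilde X$ measurable in $(\omega^{\circ},\tilde\omega^{\i})$ jointly. No law-invariance of $W$ is used anywhere in the argument, so the formula extends without change to arbitrary $W \in C^{1,2}([0,T] \times H)$.
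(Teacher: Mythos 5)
Your proposal is correct and takes essentially the same route as the paper: both amount to applying the It\^o formula on the Hilbert space $H=\L_{2}(\tilde\Omega^{\i},\tilde\Fc^{\i},\tilde\P^{\i};\R^{d})$ to the $H$-valued process $s\mapsto\tilde X_{s}(\omega^{\circ},\cdot)$ driven by the finite-dimensional Brownian motion $B$, which is what the paper delegates to \cite[Proposition 6.3]{CD14}. The only points worth flagging --- and the reason the paper speaks of ``slight adaptations'' --- are that the Hilbert-space It\^o formula requires a bit more than pointwise continuity of $D^{2}W$ (bounded sets in $H$ are not compact, so one either mollifies as in \cite{CD14} or uses compactness of the trajectory), and that the $H$-valued continuity of $s\mapsto\tilde X_{s}(\omega^{\circ},\cdot)$ should be checked (it follows from path continuity of $\tilde X$, the bound \eqref{eq: X carre int} and dominated convergence, as in Step~2.a of the proof of Proposition~\ref{prop: existence unicite eds}).
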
 

\proof  This follows from the proof of  \cite[Proposition 6.3]{CD14} up slight  adaptations  similar too the ones made in the Proposition \ref{prop: chain-rule}.
\ep

\subsection{{Verification argument}}

We recall that aim at characterizing the function $\vr: (t,\mu)\in[0,T]\x\Pc_2\mapsto1-\mathds{1}_{\Vc(t)}(\mu)$.
Following \cite{BLPR17,soner2002dynamic}, one can expect it to solve, in a certain sense, the PDE
\begin{equation}\label{eq: pde vr}
-\partial_t w(t, \mu) + H\big(t, \mu, \partial_{\mu} w(t, \mu), \partial_{\mu}\partial_{x}w(t, \mu),\partial^2_{\mu}w(t, \mu)\big)   =   0 \;, 
\end{equation}
in which 
\begin{align*}
H\big(t, \mu, \partial_{\mu} w(t, \mu), \partial_{\mu}\partial_{x}w(t, \mu),\partial^2_{\mu}w(t, \mu)\big):=
\sup_{u\in N(t,\mu,\partial_{\mu}w(t, \mu))} \left(- L_{t}^{u}[w](\mu)\right)
\end{align*}
with 
\begin{align*}
N(t,\mu,\partial_{\mu}w(t, \mu)):=&\left\{u\in \L_{0}(\R^{d};\Ur):\int \partial_{\mu}w (t,\mu)(x) a_{t}(x,\mu,u(x) )\mu(dx)=0\right\}
\end{align*}
where $\L_{0}(\R^{d};\Ur)$ stands for the collection of $\Ur$-valued Borel maps on $\R^{d}$, and 
\begin{align*}
&L_{t}^{u}[w](\mu)\\
&:=\int \int \left\{ b_{t}(x,\mu,u(x))^{\top}\partial_{\mu}w (t,\mu)(x)   
+\frac1{2}\Tr\left[\partial_{x}\partial_{\mu}w (t,\mu)(x) (a_{t} a^{\top}_{t})(x,\mu,u(x))\right] \right. \\
&~~~~\left.+\frac{1}{2}  \Tr\left[ \partial^{2}_{\mu}w (t,\mu)(x,\tilde x) a_{t}(x,\mu,u(x)) a_{t}^{\top}(\tilde x,\mu, u(\tilde x))\right]\right\}\mu(dx)\mu(d\tilde x). 
\end{align*}

There is however little chance that the above equation admits a smooth solution, and, as usual, we shall appeal to the notion of viscosity solutions, see Section \ref{subsec: visco cara} below. Still, one can check whether a measure $\mu$ belongs to the set $\Vc(t)$ by using a verification argument.\footnote{{We leave the study of more precise examples to future research.}} 

 \begin{Proposition} Let $w\in  \Cc^{1,2}_{b}([0,T]\times \Pc_2)$ and $u$ be a $U$-valued Borel map on $[0,T]\x \Omega^{\circ}\x \R^{d}$   which is  $\F$-{progressive}$\otimes\Bc(\R^d)$-measurable. Fix $t\le T$ and $\mu\in \Pc_2$ and assume that existence holds for \eqref{quenched-sde} with $\nu:=u(\cdot,X^{t,\chi,\nu}_{\cdot})$, for some $\chi \in \mathbf{X}_{t}$ such that $\P^B_\chi=\mu$. Assume further that 
 \begin{align*}
&  -\partial_t w(\cdot,\P^{B}_{X^{t,\chi,\nu}_{\cdot}}(\omega^{\circ}))- L_{\cdot}^{u(\cdot,\omega^{\circ},\cdot)}[w](\P^{B}_{X^{t,\chi,\nu}_{\cdot}}(\omega^{\circ}))\ge 0~dt-{\rm a.e.}
\\
& u(\cdot,\omega^{\circ},\cdot)\in N(\cdot,\P^{B}_{X^{t,\chi,\nu}_{\cdot}}(\omega^{\circ}),\partial_{\mu}w(\cdot, \P^{B}_{X^{t,\chi,\nu}_{\cdot}})(\omega^{\circ}))~dt-{\rm a.e.}
\\
&w(T,\cdot)\ge 1-\1_{G} \;\mbox{ on } \Pc_{2},
 \end{align*}
 for $\P^\circ$-almost all $\omega^{\circ}\in \Omega^{\circ}$.  
Then,   $\mu \in \Vc(t)$ whenever $w(t,\mu)\le 0$.
\end{Proposition}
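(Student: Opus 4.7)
The plan is to run a standard verification argument by applying the conditional Itô formula from Proposition \ref{prop: chain-rule} to the function $w$ along the flow of conditional marginals $(\P^B_{X^{t,\chi,\nu}_s})_{s\in[t,T]}$, and then to combine the drift sign condition with the orthogonality condition $u\in N$ to conclude that $s\mapsto w(s,\P^B_{X^{t,\chi,\nu}_s})$ is a nonincreasing process, $\P^\circ$-a.s.

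First, I would fix $(\chi,\nu)$ as in the assumptions and denote $X:=X^{t,\chi,\nu}$, $a_r:=a_r(X_r,\P^B_{X_r},\nu_r)$, $b_r:=b_r(X_r,\P^B_{X_r},\nu_r)$. Proposition \ref{prop: chain-rule} applied on $[t,T]$ yields, $\P^\circ$-a.s.,
\begin{align*}
w(T,\P^B_{X_T}) \;=\; w(t,\mu) &+ \int_t^T\!\Bigl(\partial_t w(r,\P^B_{X_r}) + L^{u(r,\cdot,\cdot)}_r[w](\P^B_{X_r})\Bigr)\,dr \\
&+ \int_t^T \E_B\!\left[\partial_\mu w(r,\P^B_{X_r})(X_r)\,a_r\right]dB_r,
\end{align*}
where I have used the identification \eqref{id-cond-exp} to rewrite the iterated conditional expectations involving $b$, $\partial_x\partial_\mu w$ and $\partial_\mu^2 w$ as the operator $L^{u}_r[w](\P^B_{X_r})$, recalling that $\nu_r=u(r,\omega^\circ,X_r)$.

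Now the condition $u(r,\omega^\circ,\cdot)\in N(r,\P^B_{X_r}(\omega^\circ),\partial_\mu w(r,\P^B_{X_r})(\omega^\circ))$ exactly states that
\[
\E_B\!\left[\partial_\mu w(r,\P^B_{X_r})(X_r)\,a_r(X_r,\P^B_{X_r},u(r,\cdot,X_r))\right]=0,\quad dt\otimes d\P^\circ\text{-a.e.},
\]
so the stochastic integral in the Itô expansion vanishes identically. Combined with the first hypothesis $-\partial_t w - L^u[w]\geq 0$, we obtain $w(T,\P^B_{X_T}) \leq w(t,\mu)$, $\P^\circ$-a.s. Using the terminal condition $w(T,\cdot)\geq 1-\mathbf{1}_G$ and $w(t,\mu)\leq 0$, this forces $1-\mathbf{1}_G(\P^B_{X_T})\leq 0$, hence $\P^B_{X_T}\in G$, $\P^\circ$-a.s., which proves $\mu\in\Vc(t)$.

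The only technical point that needs some care — and it is the obstacle I expect — is to justify that the ``martingale part'' is actually well-defined as a true stochastic integral (not merely a local one) and vanishes identically, rather than only in expectation. Since $w\in\Cc^{1,2}_b([0,T]\times\Pc_2)$, the bound \reff{eq: cond inetgra derivee w time} applied on a compact set containing the (continuous, hence pathwise compact) image of $s\mapsto \P^B_{X_s}$, together with the boundedness of $a$, gives $\int_t^T \bigl|\E_B[\partial_\mu w(r,\P^B_{X_r})(X_r)a_r]\bigr|^2 dr<\infty$ $\P^\circ$-a.s. The integrand being zero $dt\otimes d\P^\circ$-a.e. by assumption (ii) then makes the stochastic integral vanish pathwise, which is what the argument needs. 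Everything else is then a direct chaining of the three pointwise inequalities.
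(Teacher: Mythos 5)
Your proof is correct and follows essentially the same path as the paper's (very terse) argument: apply the conditional It\^o formula of Proposition \ref{prop: chain-rule}, observe that the orthogonality condition $u\in N$ kills the $dB$-integral and the drift sign condition makes the $dr$-integrand nonpositive, so $w(T,\P^B_{X_T})\le w(t,\mu)\le 0$, and conclude via $w(T,\cdot)\ge 1-\1_G$. The only point the paper mentions that you leave implicit is that the stated hypotheses on $u$ also ensure $\nu=u(\cdot,X_\cdot)\in\Uc$, which is needed to conclude $\mu\in\Vc(t)$ from the definition of the reachability set.
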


\proof Our conditions ensure that $\nu \in \Uc$. Moreover, the chain rule of Proposition \ref{prop: chain-rule} combined with the above imply that $w(T,\P^{B}_{X^{t,\chi,\nu}_{T}})\le 0$. Hence,  $1-\1_{G}(\P^{B}_{X^{t,\chi,\nu}_{T}})\le 0$ so that $\P^{B}_{X^{t,\chi,\nu}_{T}}\in G$.\ep
\subsection{Viscosity solution characterization}\label{subsec: visco cara}

As already mentioned, we shall in general rely on the notion of viscosity solutions. For this, we need to work at the level of the  lifting function $V:[0,T]\x  \L_{2}(\tilde \Omega^{\i},\tilde \Fc^{\i},\tilde \P^{\i};\R^d)\to \R$ of  $\vr$. In view of \eqref{eq: DW}-\eqref{eq:id2ndder}, one expects that it solves on $[0,T)\x \L_{2}(\tilde \Omega^{\i},\tilde \Fc^{\i},\tilde \P^{\i};\R^d)$  
\begin{equation}\label{eq: EDPV}
-\partial_t W + \Hc \big(\cdot, D W, D^2W\big)  =  0 \;.
\end{equation}
where $\Hc$ is defined as $\Hc_{0}$ with, for  $\eps\ge 0$, 
\begin{align*}
\Lc^{u}_{t}(\chi,P,Q) &:=\tilde \E_{B} \Big[b_t^{\top}(\chi,\P_\chi,u)P+{1\over 2}Q \big(a_t(\chi,\P_\chi,u)Z\big)a_t(\chi,\P_\chi,u)Z\Big]
\\
\Hc_{\eps}(t,\chi,P,Q) & :=  \sup_{u\in\Nc_{\eps}(t,\chi,P)}\hspace{-1mm} \Big\{-\Lc^{u}_{t}(\chi,P,Q) \Big\}
\\
\Nc_{\eps}(t,\chi,P) & :=  \Big\{ u \in \L_{0}(\tilde \Omega,\tilde \Fc,\tilde \P;\Ur)~:~|\tilde \E_{B}[a_t(\chi,\P_\chi,u)P]|\le \eps \Big\},
\end{align*}
for  $t\in[0,T]$, $u\in \L_{0}(\tilde \Omega,\tilde \Fc,\tilde \P;\Ur)$, $\chi,P\in \L_{2}(\tilde \Omega,\tilde \Fc, \P;\R^{d})$ and $Q\in  S(\L_{2}(\tilde  \Omega,$ $\tilde \Fc, \tilde \P;\R^{d}))$, the set of self-adjoint operators on $\L_{2}(\tilde \Omega, $ $\tilde \Fc, $ $\tilde \P;\R^d)$.
 
Let us recall that $W: [0,T]\x  \L_{2}(\tilde \Omega^{\i},\tilde \Fc^{\i},\tilde \P^{\i};\R^d)$ is extended to $[0,T]\x  \L_{2}(\tilde \Omega,\tilde \Fc,\tilde \P;\R^d)$ by defining $W(t,X)$ as the random variable $\omega^{\circ}\in \Omega^{\circ}\mapsto W(t,X(\omega^{\circ},\cdot))$.

Since neither $V$ nor $\Hc_{\cdot}$ are a-priori continuous, we  define $V_{*}$ and $V^{*}$ as the lower-semicontinous and upper-semicontinuous enveloppes of $V$, and let $\Hc^{*}$ and $\Hc_{*}$ be defined as  the relaxed upper- and lower-semilimits as $\eps\to 0$.  

 We say that  $V_{*}$ is a viscosity supersolution (resp.~$V^{*}$ is a subsolution) of \reff{eq: EDPV} if  for any $(t,\chi)\in [0,T]\times \L_{2}(\tilde  \Omega^{\i}, \tilde \Fc^{\i},\tilde  \P^{\i};\R^{d})$ and any function $\Phi\in C^{1,2}\big([0, T ]\times \L_{2}( \tilde \Omega^{\i},\tilde  \Fc^{\i},\tilde  \P^{\i};\R^{d})\big)$   such that 
\beqs
(V_*-\Phi) (t,\chi) & = & \min_{[0,T]\times \L_{2}(\tilde  \Omega^{\i}, \tilde \Fc^{\i}, \tilde \P^{\i};\R^{d})}(V_*-\Phi) \\
\text{ ( resp.  } (V^*-\Phi) (t,\chi) & = & \max_{[0,T]\times \L_{2}( \tilde \Omega^{\i},\tilde  \Fc^{\i},\tilde  \P^{\i};\R^{d})}(V^*-\Phi) \text{ )}
\enqs
we have 
\beqs
-\partial_t \Phi(t, \chi) + \Hc^* \big(t, \chi, D \Phi(t, \chi), D^2\Phi(t,  \chi)\big)  & \geq &  0 \\
\text{ (resp. }-\partial_t \Phi(t, \chi) + \Hc_* \big(t, \chi, D \Phi(t, \chi), D^2\Phi(t,  \chi)\big)  & \leq &  0 \text { )} \;.
\enqs
If $V_{*}$ is a supersolution and $V^{*}$ is a subsolution, we say that $V$ is a discontinuous solution.
\\
 
We are now ready to state the viscosity property of the function $V$. This requires the following continuity assumption on the set $\Nc$.

\vspace{2mm}
\noindent\textbf{(H2)}: Let $\Oc$ be an open subset of $[0,T]\times \L_{2}(\tilde \Omega,\tilde \Fc,\tilde \P;\R^d)\x \L_{2}(\tilde \Omega,\tilde \Fc,\tilde \P;\R^d)$ such that  $\Nc_{0}(t,\chi,P)\neq\emptyset$ for all  $(t,\chi,P)\in\rm \Oc$. Then, for every $\eps>0$, $(t_0,\chi_0,P_0)\in  \Oc$ and $u_0\in \Nc_{0} (t_0,\chi_0,P_0)$, there exists an open neighborhood $\Oc'$ of $(t_0,\chi_0,P_0)$ and a measurable map $\hat u:[0,T]\x \R^{d} \x \R^{d}\x \tilde \Omega^{\i}$ $\to $ $\Ur$  such that:
\\
\noindent{\rm (i)} $\tilde \E_{B}[|\hat u_{t_{0}}(\chi_0,P_0,\xi)-u_0|]\leq \eps$.\\
\noindent{\rm (ii)} There exists $C>0$ for which 
\begin{align*}
\tilde \E[|\hat u_{t}(\chi, P,\xi)-\hat u_{t}(\chi', P',\xi)|^{2}]\le C\tilde \E[|\chi-\chi'|^{2}+|  P-P' |^2] 
\end{align*}
for all $(t,\chi,P),(t,\chi',P')\in \Oc'$.\\
\noindent{\rm (iii)} $\hat u_{t}(\chi, P,\xi)\in \Nc_{0}(t,\chi,P)$ $\P^{\circ}-a.e.$, for all $(t,\chi,P)\in \Oc'$.
 
 \vspace{2mm}

\ni We also strengthen \textbf{(H1)} by the following additional condition.

\vspace{2mm}

\noindent\textbf{(H1')} There exist a constant $C$ and a function $m:~\R_+\rightarrow\R$ such that 
$
m(t) \to  0
$ as $t\to 0$ 
and 
\beqs
|b_t(x,\mu,u)-b_{t'}( x,\mu,u')|+|a_t(x,\mu,u)-a_{t'}(x,\mu,u')| & \leq & m(t-t')+C|u-u'|. 
\enqs
for all $t,t'\in[0,T]$, $x\in\R^d$, $\mu\in \Pc_2$ and $u,u'\in \Ur$.

\vspace{2mm}

\begin{Theorem}  \label{THMBT}
Under \textbf{(H1)} and \textbf{(H1')} the function $V_{*}$ is a viscosity supersolution of \reff{eq: EDPV}.
If in addition \textbf{(H2)} holds, then $V^{*}$ is a viscosity subsolution of \reff{eq: EDPV}.
\end{Theorem}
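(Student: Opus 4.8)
The plan is to follow the classical stochastic-target paradigm of Soner--Touzi \cite{soner2002dynamic}, lifting everything to the Hilbert space $\L_2(\tilde\Omega^{\i},\tilde\Fc^{\i},\tilde\P^{\i};\R^d)$ and exploiting the geometric dynamic programming principle of Theorem \ref{Th-DDP}. Recall that $V(t,X)=\vr(t,\tilde\P^{\i}_X)=1-\mathds 1_{\Vc(t)}(\tilde\P^{\i}_X)$ takes only the values $0$ and $1$, with $V=0$ precisely on the reachability set. I would treat the supersolution and subsolution properties separately.

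\textbf{Supersolution property.} Fix $(t_0,\chi_0)$ and a test function $\Phi\in C^{1,2}$ with $(V_*-\Phi)(t_0,\chi_0)=\min(V_*-\Phi)$, and w.l.o.g.\ $(V_*-\Phi)(t_0,\chi_0)=0$. If $V_*(t_0,\chi_0)=0$ there is nothing to prove since then the minimum forces $\Phi\le V_*\le 1$ with a zero at $(t_0,\chi_0)$ only through the general sign considerations; the interesting case is $V_*(t_0,\chi_0)=0$, i.e. $\tilde\P^{\i}_{\chi_0}\in\Vc(t_0)$. Take a sequence $(t_n,\chi_n)\to(t_0,\chi_0)$ with $V(t_n,\chi_n)\to V_*(t_0,\chi_0)=0$; for $n$ large $V(t_n,\chi_n)=0$, so $\mu_n:=\tilde\P^{\i}_{\chi_n}\in\Vc(t_n)$. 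Suppose, for contradiction, that the supersolution inequality fails: $-\partial_t\Phi(t_0,\chi_0)+\Hc^*(t_0,\chi_0,D\Phi(t_0,\chi_0),D^2\Phi(t_0,\chi_0))<0$. By definition of $\Hc^*$ as a relaxed upper-limit and by the continuity of $\Phi$'s derivatives, there is $\eps>0$ and a neighborhood on which $-\partial_t\Phi+\sup_{u\in\Nc_\eps(\cdot)}(-\Lc^u_\cdot(\cdot,D\Phi,D^2\Phi))<0$; in particular every admissible drift-direction can be dominated. Using the GDP (Theorem \ref{Th-DDP}) one selects, for the $\chi_n$-initialized problem and a small stopping time $\theta$, a control $\nu$ keeping $\P^B_{X^{t_n,\chi_n,\nu}_\theta}\in\Vc(\theta)$, hence $V(\theta,X^{t_n,\chi_n,\nu}_\theta)=0$; feeding this into It\^o's formula \eqref{eq: Ito on W} (valid for $\Phi\in C^{1,2}$ by Proposition \ref{RK-nonlif-CR}), the constraint $\P^B_{X^{t_n,\chi_n,\nu}_T}\in G$ forces the diffusion coefficient of $\Phi$ along the flow to vanish in the relevant direction, i.e.\ the control must lie in $\Nc_0$, and then the strict negativity of the drift term pushes $\Phi(\theta,X_\theta)$ strictly above $\Phi(t_n,\chi_n)$, contradicting $\Phi\le V_*\le V=0$ at $\theta$. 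This is the standard argument; the delicate point is the measurable selection of the control keeping the state in $\Vc(\theta)$, which is exactly what Lemmas \ref{lem : dans Ut}--\ref{Lem-meas-selec} provide.

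\textbf{Subsolution property.} Fix a test function $\Phi$ with $(V^*-\Phi)(t_0,\chi_0)=\max(V^*-\Phi)$, normalized so the value is $0$; the relevant case is $V^*(t_0,\chi_0)=1$, i.e.\ $\tilde\P^{\i}_{\chi_0}\notin\Vc(t_0)$ (in a neighborhood, after passing to the envelope). Assume for contradiction that $-\partial_t\Phi(t_0,\chi_0)+\Hc_*(t_0,\chi_0,D\Phi,D^2\Phi)>0$ at the contact point. By the definition of $\Hc_*$ as a lower relaxed limit, for small $\eps>0$ there is $u_0\in\Nc_0(t_0,\chi_0,D\Phi(t_0,\chi_0))$ with $-\partial_t\Phi-\Lc^{u_0}_{t_0}(\chi_0,D\Phi,D^2\Phi)>0$; assumption \textbf{(H2)} then yields a locally Lipschitz, measurable feedback $\hat u$ with $\hat u_t(\chi,P,\xi)\in\Nc_0(t,\chi,P)$ near $(t_0,\chi_0,D\Phi(t_0,\chi_0))$ and $\hat u_{t_0}(\chi_0,D\Phi(t_0,\chi_0),\xi)\approx u_0$. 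Plugging the feedback control $\nu_s=\hat u_s(X^{t_0,\chi_0,\nu}_s, \partial_\mu\Phi(s,\tilde\P^{\i}_{X_s})(X_s),\xi)$ into the SDE (existence holds because the coefficients are bounded, continuous and the feedback is Lipschitz in $(x,\mu)$ through $(\chi,P)$), the $\Nc_0$-membership makes the $dB$-term in It\^o's formula \eqref{eq: Ito on W} for $\Phi$ along this flow vanish, so $\Phi(\theta,X_\theta)-\Phi(t_0,\chi_0)$ equals the drift integral, which is strictly negative on a short interval up to a stopping time $\theta$. Hence $\Phi(\theta,X_\theta)<\Phi(t_0,\chi_0)=V^*(t_0,\chi_0)$, and since $\Phi\ge V^*\ge V$ everywhere, $V(\theta,X_\theta)<1$, i.e.\ $V(\theta,X_\theta)=0$, so $\P^B_{X_\theta}\in\Vc(\theta)$; but then the GDP (Theorem \ref{Th-DDP}) gives $\tilde\P^{\i}_{\chi_0}\in\Vc(t_0)$, contradicting $V^*(t_0,\chi_0)=1$. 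The terminal condition $w(T,\cdot)\ge 1-\mathds 1_G$ and \textbf{(H1')} (uniform continuity in time of $b,a$) are used to make the boundary/short-time estimates go through.

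\textbf{Main obstacle.} I expect the genuinely hard part to be the subsolution direction, specifically controlling the feedback flow driven by $\hat u$: one must (a) verify $\hat u$ evaluated along the solution stays in $\Nc_0$ for a short random time interval --- here the Lipschitz bound in \textbf{(H2)(ii)} combined with the stability estimate \eqref{eq: continuity loi par rapport condi initial} and a Gronwall argument is needed --- and (b) handle the fact that It\^o's formula \eqref{eq: Ito on W} is stated for $\Cc^{1,2}_b$ lifts, whereas the test function $\Phi$ is merely $C^{1,2}$ on the $\L_2$ space; Proposition \ref{RK-nonlif-CR} covers this, but one still has to localize by a stopping time to stay inside the neighborhood $\Oc'$ where all estimates are valid and to control the martingale part. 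A secondary but non-trivial point, shared with the supersolution case, is that $V$ is $\{0,1\}$-valued and discontinuous, so every limiting argument must be carried out at the level of the semicontinuous envelopes $V_*,V^*$ and must exploit that $\Vc(\theta)$ is preserved along suitably chosen controls --- precisely the content of Theorem \ref{Th-DDP} together with the selection Lemma \ref{Lem-meas-selec}, reformulated via Lemma \ref{lem : dans Ut} so that the constructed controls live in $\Uc_t$ and are independent of ${}^t\!B$.
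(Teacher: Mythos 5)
Your subsolution argument is essentially the paper's: contradiction at a point with $V^*=1$, a control $u_0\in\Nc_0$ coming from the finiteness of $\Hc_*$, the Lipschitz feedback selection from \textbf{(H2)}, It\^o's formula via Proposition \ref{RK-nonlif-CR} with vanishing martingale part by \textbf{(H2)}(iii), an exit time from a small ball, and a contradiction with Theorem \ref{Th-DDP}. (You do wave away the case $V^*(t_0,\chi_0)=0$, which in the paper is not empty: it uses the sign conditions at the maximum together with a relaxed-limit argument on $\Hc_{\eps_n}$ along approximating sequences; similarly the supersolution case where $V$ is locally constant needs the first/second-order conditions at a local maximum of $\Phi$. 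These are minor but they are part of the statement.)

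The genuine gap is in your supersolution argument. You claim that, because the admissible control $\nu^n$ drives the conditional law into $G$, ``the constraint forces the diffusion coefficient of $\Phi$ along the flow to vanish in the relevant direction, i.e.\ the control must lie in $\Nc_0$,'' and you then read off a sign for the drift. That step is unjustified and, as stated, false: an admissible control attaining the target need not make $\tilde\E_B[a_t(X^n_t,\P^B_{X^n_t},\nu^n_t)D\Phi(t,X^n_t)]$ vanish (or even be small) pointwise in $(t,\omega^\circ)$, and without that the It\^o expansion of $\Phi(\cdot,X^n)$ has a drift of unknown sign and a non-trivial stochastic integral, so no contradiction follows by taking (conditional) expectations. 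The paper's proof precisely avoids this heuristic: it works with the penalized test function $\tilde\Phi$, uses the GDP only to get $V(\cdot,X^n)=0$ up to the exit time $\theta_n$ (hence $-\tilde\Phi\ge 0$ there and $-\tilde\Phi(\theta_n,X^n_{\theta_n})\ge\zeta>0$), and then splits time according to whether the drift bound $-\alpha^n_t\le-\eta$ holds. On the bad set $A_n$ the control is necessarily outside $\Nc_\eps$, so the $dB$-integrand $\psi^n$ satisfies $|\psi^n|>\eps$; this nondegeneracy is exploited to build the exponential local martingale $L^n$ with kernel $\rho^n|\psi^n|^{-2}\psi^n$, so that $L^nM^n$ (with $M^n$ as in \reff{def-Mn}) is a nonnegative supermartingale, giving $0\le\E[L^n_{\theta_n}M^n_{\theta_n}]\le\beta_n-\zeta$ and the contradiction as $\beta_n\to0$. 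Some device of this type (measure change / supermartingale comparison, as in \cite{soner2002dynamic}) is indispensable here; your sketch as written would not go through without it.
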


\proof {{\bf Part I.}~Supersolution property.}  Fix $(t_0,\chi_0)\in [0,T)\times \L_{2}(\tilde \Omega^{\i},\tilde \Fc^{\i},\tilde \P^{\i};\R^d)$ and a  test function  $\Phi\in C^{1,2}\big([0,T)\times \L_{2}(\tilde \Omega^{\i},\tilde \Fc^{\i},\tilde \P^{\i};\R^d)\big)$ such that 
\beqs
( V_*-\Phi) (t_0,\chi_0) & = & \min_{[0,T]\times \L_{2}(\tilde \Omega^{\i},\tilde \Fc^{\i},\tilde \P^{\i};\R^d)}(V_*-\Phi)~~=~~0\;.
\enqs
We prove that
\beq\label{ppte-sur-sol}
-\partial_t \Phi(t_0, \chi_0) + \Hc^* \big(t_0, \chi_0, D \Phi(t_0, \chi_0), D^2\Phi(t_0, \chi_0)\big)  & \geq &  0\;. 
\enq 
\textbf{1.} Suppose that the function $V$ is constant in a neighborhood of $(t_0,\chi_0)$. 
 Then $\Phi(t_0,\chi_0)$ is a local maximum of $\Phi$ and therefore
\beq\label{cond-V-loc-cst}
\partial_t \Phi(t_0,\chi_0)\leq0\;,\quad D \Phi(t_0,\chi_0)=0 & \mbox{ and } & D^2 \Phi(t_0,\chi_0){\le }0\;. 
\enq
Hence, $ \Nc_{0}(t_0,\chi_0,D \Phi(t_0, \chi_0))= \L_{0}(\tilde\Omega,\tilde \Fc,\tilde \P;\Ur)$ and 
\beqs
-\partial_t \Phi(t_0, \chi_0) + \Hc_{0} \big(t_0, \chi_0, D \Phi(t_0, \chi_0), D^2\Phi(t_0, \chi_0)\big)  & \geq &  0\;, 
\enqs
  so that \reff{ppte-sur-sol} is satisfied.

\noindent\textbf{2.} We now consider the complementary case: $V_*(t_0,\chi_0)=0$. Let $(t_n,\chi_n)_{n\geq 1}$ be a sequence of $[0,T)\times \L_{2}(\tilde \Omega^{\i},\tilde  \Fc^{\i},  \tilde \P^{\i};\R^{d})$  converging to $(t_0,\chi_0)$ and such that 
\beq\label{cond v app seq}
V(t_n,\chi_n) & = & 0\;,\quad \mbox{ for all }~n\geq 1.
\enq
 We argue by contradiction and suppose that 
\beqs
-\partial_t \Phi(t_0, \chi_0) + \Hc^* \big(t_0, \chi_0, D \Phi(t_0, \chi_0), D^2\Phi(t_0, \chi_0)\big)  & =: & -2\eta  
\enqs
for some $\eta>0$. Define 
\beqs
\tilde \Phi(t,\chi) & = & \Phi(t,\chi)-\varphi\big(|t-t_0|^2+\E\big[\big|\chi-\chi_0\big|^2\big]^{2}\big)\;
\enqs
for $(t,\chi)\in [0,T]\times\L_{2}(\tilde \Omega^{\i},\tilde \Fc^{\i},\tilde \P^{\i};\R^d)$, where $\varphi\in C^\infty(\R,\R)$ is such that $\varphi(x)= x$ for $x\in[0,1]$ and $\varphi(x)= 2$ for $x\geq2$.  
Then,  
\begin{align*}
(\partial_t\tilde \Phi,D\tilde \Phi,D^2\tilde \Phi)(t_0,\chi_0) =(  \Phi,D  \Phi,D^2  \Phi)(t_0,\chi_0) ,
\end{align*}
and   
we can find $\eps>0$ and an open ball $B_{\eps}(t_{0},\chi_{0})$ such that  
\begin{align} 
 -\eta\ge& -\partial_t\tilde \Phi(t,\chi)-\Lc^{u}_{t}(\chi,D\tilde \Phi(t,\chi),D^{2}\tilde \Phi(t,\chi))  \;\label{cond-local}
\end{align}
for any $(t,\chi)\in B_\eps(t_0,\chi_0)$ and any $u\in \Nc_\eps(t,\chi,D\Phi(t,\chi))$. Let $\partial_pB_\eps(t_0,\chi_0) := \{t_0 + \eps\} \times cl (B_\eps(\chi_0)) \cup [t_0, t_0 + \eps) \times \partial B_\eps(\chi_0)$ denote the parabolic boundary of $B_\eps(t_0, \chi_0)$ and observe that
\beq\label{min-strict}
\zeta & := & \inf_{\partial_pB_\eps(t_0,\chi_0)} (V_*-\tilde \Phi)~~>~~0  \;.
\enq
In view of \reff{cond v app seq}, we can find  a control $\nu^n\in \Uc$ such that
\beqs
\tilde \P^B_{X^n_t} & \in & G\;,
\enqs
where $X^n= X^{t_n,\chi_n,\nu^n}$. We then define the stopping times 
\beqs
\theta_n(\omega^{\circ}) & = & \inf\Big\{s\geq t_n~:~\big(s,X^n_{s}(\omega^{\circ},.)\big)\notin B_\eps(t_0,\chi_0)\Big\}\;,\quad \omega^{\circ}\in \Omega^{\circ}\;.
\enqs
{By  Theorem \ref{Th-DDP},
$
V(\cdot ,X^n_\cdot )  = 0$ on $[t_{n},T]$, so that   $-\tilde \Phi(\cdot ,X^n )\ge 0$ on $[t_{n},T]$ and $-\tilde \Phi(\theta_n ,X^n_{\theta_n } )\ge  \zeta $ by \reff{min-strict}}.
Let us set $\beta_n:=-{\tilde \Phi}(t_n,\chi_n)$ and define 
\begin{align*}
\alpha^{n}_{t}:=& {\tilde \E_{B}}[
\partial_t\tilde \Phi(t,{X^n_t})
+\Lc^{\nu^{n}_{t}}_{t}({X^n_t},D\tilde \Phi(t,{X^n_t}),D^2\tilde \Phi(t,{X^n_t}))],\\
\rho^n :=& -  \alpha^{n}  \mathds{1}_{A_n}\,,\,
\psi^n  :=  -\tilde \E_B\big[a(X^n,\tilde \P^B_{X^n},\nu^n)D\tilde \Phi(\cdot,X^n)\big]
\end{align*}
with
\begin{align*}
A_n := & \Big\{ t\in [t_n,{\theta_n}]\;:\; -\alpha^{n}_{t}    >  -\eta \Big\}.
\end{align*}
Applying Proposition \ref{RK-nonlif-CR} to $\tilde \Phi(.,X^n)$, we then get
that $   M^n_{{\theta_{n}}}\ge 0$ 
{where }
\begin{align}\label{def-Mn}
 {M^n:=\beta_n-\zeta+\int_{t_n}^{\cdot} \rho^n_{t}dt+\int_{t_n}^{\cdot}\psi^n_{t}dB_{t}\ge \beta_n-\zeta\ge -\frac12 \zeta,}
\end{align}
{for $n$ large.}
By \reff{cond-local}, 
\beqs
\big|\tilde \E_B\big[a_t(X^n_t,\P^B_{X^n_t},\nu^n_t)D\tilde \Phi(t,X^n_t)\big]\big| & > & \eps\;,\;\mbox{ for } t\in A_n,
\enqs
and we can   define   the positive $\bar \F^{\circ}$-local martingale $L^n$ by 
\beqs
L^n_{t} & = & 1-\int_{t_n}^{t} L^n_{s}\rho^n_{s}|\psi^n_{s}|^{-2}\psi^n_{s}dB_{s} \;,\quad t\geq t_{n}\;.
\enqs
{The coefficients $a$ and $b$ being bounded, $L^{n}$ is a true martingale.}
{In view of \reff{def-Mn}, $L^nM^n$ is a non-negative local martingale that is bounded from below by a martingale.} Therefore, it is a super-martingale and  
\beqs
0 \leq  \E[L^n_{\theta_{n}}M^n_{\theta_{n}}]\leq L^n_{t_{n}}M^n_{t_{n}}=M^n_{t_{n}}=\beta_n-\zeta\;.
\enqs
Sending $n$ to $\infty$, we get a contradiction since $\beta_{n}\to 0$.

\noindent{{\bf Part II.}~Subsolution property.}  Fix $(t_0,\chi_0)\in [0,T)\times \L_{2}(\tilde \Omega^{\i},\tilde \Fc^{\i},\tilde \P^{\i};\R^d)$ and $\Phi\in C^{1,2}\big( [0,T]\times \L_{2}(\tilde \Omega^{\i},\tilde \Fc^{\i},\tilde \P^{\i};\R^d)\big)$ such that 
\beq\label{test cond sub sol}
(V^*-\Phi) (t_0,\chi_0) & = & \max_{[0,T]\times \L_{2}(\tilde \Omega^{\i},\tilde \Fc^{\i},\tilde \P^{\i};\R^d)}(V^*-\Phi).
\enq
We have to prove that
\beqs
-\partial_t \Phi(t_0, \chi_0) + \Hc_* \big(t_0, \chi_0, D \Phi(t_0, \chi_0), D^2\Phi(t_0, \chi_0)\big)  & \leq &  0\;. 
\enqs 
We   distinguish two cases.

\vspace{2mm}

\ni \textbf{1.} Suppose that $V^*(t_0,\chi_0)=0$. Then, we deduce from \reff{test cond sub sol} that 
\beq\label{cond-1-sub-sol}
\partial_t\Phi(t_0,\chi_0)~~\geq~~0 \;,~~D\Phi(t_0,\chi_0)~~=~~0 & \text{ and } & D^2\Phi(t_0,\chi_0)~~\geq~~0\;.\qquad
\enq
Let $(\eps_{n},t_n,\chi_n,P_n,Q_n)_{n\geq1}\subset [0,1]\x [0,T]\x \L_{2}(\tilde \Omega^{\i},\tilde \Fc^{\i},\tilde \P^{\i};\R^d)\x \L_{2}(\tilde\Omega^{\i},\tilde\Fc^{\i},\tilde\P^{\i};$ $\R^d)\x S(\L_{2}(\tilde \Omega^{\i},\tilde \Fc^{\i},\tilde \P^{\i};\R^d))$ be a sequence converging to $(0, t_0,\chi_0,D\Phi(t_0,\chi_0),$ $D^2\Phi(t_0,$ $\chi_0))$ such that 
\beq\label{approx-H_*1}
\Hc_{\eps_{n}}(t_n,\chi_n,P_n,Q_n) & \to & \Hc_*(t_0,\chi_0,D\Phi(t_0,\chi_0),D^2\Phi(t_0,\chi_0)) \;.
\enq
It follows from \reff{cond-1-sub-sol} that
\beqs
&&\lim_{n\rightarrow+\infty}\Hc_{\eps_{n}}(t_n,\chi_n,P_n,Q_n)\\
& & \leq  \lim_{n\rightarrow+\infty}-{1\over 2} \inf_{u\in \L_{0}(\tilde \Omega^{\i},\tilde \Fc^{\i},\tilde \P^{\i};\Ur)}\tilde \E\Big[ Q_n (a_{t_n}(\chi_n,\tilde \P_{\chi_n},u)Z)a_{t_{n}}(\chi_n,\tilde \P_{\chi_n},u)Z \Big].
\enqs
Since $a$ is continuous  and bounded, it follows from the convergence of $Q_n$ to $D\Phi(t_0,\chi_0)$ that 
 \beqs
   \lim_{n\rightarrow+\infty}\inf_{u\in  \L_{0}(\tilde \Omega^{\i},\tilde \Fc^{\i},\tilde \P^{\i};\Ur)}\tilde \E\Big[ Q_n(a_{t_n}(\chi_n,\tilde \P_{\chi_n},u)Z)a_{t_{n}}(\chi_n,\tilde \P_{\chi_n},u)Z \Big] & = &\\
\inf_{u\in  \L_{0}(\tilde \Omega^{\i},\tilde \Fc^{\i},\tilde \P^{\i};\Ur)}\tilde \E\Big[  D^2\Phi(t_0,\chi_0)(a_{t_0}(\chi_0,\tilde \P_{\chi_0},u)Z)a_{t_{0}}(\chi_0,\tilde \P_{\chi_0},u)Z\Big]. 
& &  \enqs 
Combining the above leads to
 \beqs
&&\lim_{n\rightarrow+\infty}\Hc_{\eps_{n}}(t_n,\chi_n,P_n,Q_n) \\
&& \leq  -{1\over 2}\inf_{u\in \L_{0}(\tilde \Omega^{\i},\tilde \Fc^{\i},\tilde \P^{\i};\Ur)}  \E\Big[D^2\Phi(t_0,\chi_0)(a_{t_0}(\chi_0,\tilde \P_{\chi_0},u)Z)a_{t_{0}}(\chi_0,\tilde \P_{\chi_0},u)Z\Big]\;,
 \enqs
 so that \reff{cond-1-sub-sol} and \reff{approx-H_*1}  lead to 
  \beqs
-\partial_t\Phi(t_0,\chi_0)+ \Hc_*(t_0,\chi_0,D\Phi(t_0,\chi_0),D^2\Phi(t_0,\chi_0)) & \leq & 0\;.
 \enqs
 
\vspace{2mm}

\ni \textbf{2.} Suppose now that $V^*(t_0, \chi_0) = 1$.  
We argue by contradiction and suppose that
\beqs
-\partial_t \Phi(t_0, \chi_0) + \Hc_* \big(t_0, \chi_0, D \Phi(t_0, \chi_0), D^2\Phi(t_0, \chi_0)\big)  & =: & 4\eta~~>~~ 0\;. 
\enqs 
Since the left hand-side is finite and $\Nc_{0}\subset \Nc_{\eps}$ for $\eps\ge 0$, there exists an open neighborhood $\Oc$ of $(t_0, \chi_0, D \Phi(t_0, \chi_0))$ such that $\Nc_{0}\neq\emptyset$ on  $  \Oc$ and there exists $u_0\in \Nc_0(t_0,\chi_0,D\Phi(t_0,\chi_0))$ such that
\beqs
-\partial_t \Phi(t_0, \chi_0) - \Lc_{t_0}^{u_0} \big(t_0, \chi_0, D \Phi(t_0, \chi_0), D^2\Phi(t_0, \chi_0)\big)  & \geq & 2\eta\;.
\enqs
Then, \textbf{(H2)} implies that for any $\eps>0$ there exists  an open neighborhood $\Oc'$ of $(t_0, \chi_0, D \Phi(t_0, \chi_0))$ and 
   a measurable map $\hat u:[0,T]\x \R^{d} \x \R^{d}\x \tilde \Omega^{\i}$ $\to $ $\Ur$  such that:
\\
\noindent{\rm (i)} $\tilde \E_{B}[|\hat u_{t_{0}}(\chi_0,P_0,\xi)-u_0|]\leq \eps$\\
\noindent{\rm (ii)} There exists $C>0$ for which 
\begin{align*}
\tilde \E[|\hat u_{t}(\chi, P,\xi)-\hat u_{t}(\chi', P',\xi)|^{2}]\le C\tilde \E[|\chi-\chi'|^{2}+  |P-P'|^2] 
\end{align*}
for all $(t,\chi,P),(t,\chi',P')\in \Oc'$.\\
\noindent{\rm (iii)} $\hat u_{t}(\chi, P,\xi)\in \Nc_{0}(t,\chi,P)$ $\P^{\circ}-a.e.$, for all $(t,\chi,P)\in \Oc'$.

Define  
\beqs
\tilde \Phi(t,\chi) & = & \Phi(t,\chi)+|t-t_0|^2 + \tilde \E_{B}\big[|\chi-\chi_0|^2\big]^2\;,
\enqs
for  $(t,\chi)\in [0,T]\times\L_{2}(\tilde \Omega,\tilde \Fc,\tilde \P;\R^d)$. Then, 
\begin{align*}
(\partial_t\tilde \Phi,D\tilde \Phi,D^2\tilde \Phi)(t_0,\chi_0) =(  \partial_t\Phi,D  \Phi,D^2  \Phi)(t_0,\chi_0).
\end{align*}
The above combined with  \textbf{(H1)}-\textbf{(H1')} shows that 
we can find some $\eps>0$ such that 
\beq\label{local-estim-gen}
-\partial_t \tilde \Phi(t, \chi) - \Lc_{t}^{\hat u_{t}(\chi, D\tilde \Phi(t,\chi),\xi)}(\chi,D \tilde \Phi(t, \chi),D^{2} \tilde \Phi(t, \chi))   \geq \eta
\enq
for all $(t,\chi)\in B_\eps(t_0,\chi_0)$.

Let now $(t_n,\chi_n)_{n\ge 1}$ be a sequence of $[0,T]\times \L_{2}(\tilde \Omega^{\i},\tilde \Fc^{\i},\tilde \P^{\i};\R^d)$ such that
\beq\label{approx-seq-sub-sol}
\big(t_n,\chi_n,{V}(t_n,\chi_n)\big) & \to & \big(t_0,\chi_0,{V^*}(t_0,\chi_0)\big)\;,
\enq
and consider the solution $X^n$  of \reff{eq: SDEMF} starting from $\chi_n$ at $t_n$ and associated to the feedback control $\hat \nu^{n}:=\hat u_{\cdot}(X^n,D\tilde \Phi(.,X^n),\xi)$. The fact  that $X^n$ is well-defined  is guaranteed by {\rm (ii)} above, this is obtained by a straightforward extension of Proposition \ref{prop: existence unicite eds}. 
We then define the stopping times $\theta_n$ by
\beqs
\theta_n(\omega^{\circ}) & = & \inf\big\{s\geq t_n~:~(s,X^n_s(\omega^{\circ},.)\notin B_\eps(t_n,\chi_n)\big\} \;,\quad \omega^{\circ}\in \Omega^{\circ}\;.
\enqs
Letting 
\beqs
-\zeta   :=   \max_{\partial_pB_\eps(t_0,\chi_0)} (V^*-\tilde \Phi)~~<~~0  \;,
\enqs
we have
$(V-\Phi)(\theta_n,X^n_{\theta_n})  \leq  -\zeta$.

We then apply Proposition \ref{RK-nonlif-CR}, to  deduce from {\rm (iii)} and \reff{local-estim-gen} that 
$\tilde \Phi (\theta_n ,$ $X^n_{\theta_{n} } )  \leq   \tilde \Phi (t_n,\chi_n)$
which implies  $V (\theta_n ,X^n_{\theta_{n} })   \leq   \tilde \Phi (t_n,\chi_n)-\zeta$. Since $\tilde \Phi(t_{n},\chi_{n})\to 1$, we have $V (\theta_n ,X^n_{\theta_{n} }) <1$ for $n$ large enough, which contradicts Theorem \ref{Th-DDP}.
\ep
%

{We end this section with the derivation of the boundary condition at the terminal time $T$. 
To this end, let us define the function  $g=1-\mathds{1}_{\bar G}$ where  
\beqs
\bar G & = & \big\{\chi \in \L_{2}(\tilde \Omega^{\i},\tilde \Fc^{\i},\tilde \P^{\i};\R^d)~:~\tilde \P_\chi\in G\big\}.
\enqs
Note that $\bar G$ is a closed subset of  $\L_{2}(\tilde \Omega^{\i},\tilde \Fc^{\i},\tilde \P^{\i};\R^d)$ since $G$ is closed for $\Wc_2$. Hence, 
\beqs
g^*  =  1-\mathds{1}_{{\rm int}(\bar G)}\;,\;
g_*  =  1-\mathds{1}_{\bar G},
\enqs
 where $g^*$ and $g_*$ stand for the upper and lower semi-continuous envelopes of $g$ respectively.
}
\begin{Theorem}\label{THMTC}
Under \textbf{(H1)}, the function $V$ satisfies
\beqs
V^*(T,.)~~= g^* & \mbox{ and } & V_*(T,.)~~= g_* 
\enqs
on $\L_{2}(\tilde \Omega^{\i},\tilde \Fc^{\i},\tilde \P^{\i};\R^d)$.
\end{Theorem}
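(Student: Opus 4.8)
The plan is to split the proof into two genuinely one-sided bounds, the remaining inequalities being free. First I would record that $\Vc(T)=G$: since $X^{T,\chi,\nu}_{T}=\chi$, a measure lies in $\Vc(T)$ iff it lies in $G$. Hence $V(T,\cdot)=1-\mathds{1}_{\bar G}=g$ on $\L_{2}(\tilde\Omega^{\i},\tilde\Fc^{\i},\tilde\P^{\i};\R^{d})$, and, $\bar G$ being closed, $g=g_{*}$ and $g^{*}=1-\mathds{1}_{\mathrm{int}(\bar G)}$. Restricting the relaxed semilimits of $V$ to the slice $\{s=T\}$ then gives for free $V^{*}(T,\cdot)\ge (V(T,\cdot))^{*}=g^{*}$ and $V_{*}(T,\cdot)\le (V(T,\cdot))_{*}=g_{*}$, so that only $V_{*}(T,\cdot)\ge g_{*}$ and $V^{*}(T,\cdot)\le g^{*}$ remain.

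For $V_{*}(T,\cdot)\ge g_{*}$, only the case $\chi\notin\bar G$ is non-trivial (otherwise $g_{*}(\chi)=0\le V_{*}(T,\chi)$). I would argue by contradiction: if $(s_{n},\psi_{n})\to(T,\chi)$ in $[0,T]\times\L_{2}(\tilde\Omega^{\i},\tilde\Fc^{\i},\tilde\P^{\i};\R^{d})$ with $\tilde\P_{\psi_{n}}\in\Vc(s_{n})$, choose witnesses $(\hat\chi_{n},\nu_{n})\in\Xb^{2}_{s_{n}}\times\Uc$ with $\P^{B}_{\hat\chi_{n}}=\tilde\P_{\psi_{n}}$ and $\P^{B}_{X^{s_{n},\hat\chi_{n},\nu_{n}}_{T}}\in G$ a.s. Boundedness of $(b,a)$ gives, as in Step~3 of the proof of Proposition~\ref{prop: existence unicite eds}, $\E[|X^{s_{n},\hat\chi_{n},\nu_{n}}_{T}-\hat\chi_{n}|^{2}]\le C(T-s_{n})$, hence $\E[\Wc_{2}(\P^{B}_{X^{s_{n},\hat\chi_{n},\nu_{n}}_{T}},\tilde\P_{\psi_{n}})^{2}]\to0$; together with $\Wc_{2}(\tilde\P_{\psi_{n}},\tilde\P_{\chi})\le\|\psi_{n}-\chi\|_{\L_{2}}\to0$ this shows $\P^{B}_{X^{s_{n},\hat\chi_{n},\nu_{n}}_{T}}\to\tilde\P_{\chi}$ in $\L_{2}(\P^{\circ};\Wc_{2})$. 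Passing to a $\P^{\circ}$-a.s.\ convergent subsequence and using that $G$ is $\Wc_{2}$-closed forces $\tilde\P_{\chi}\in G$, i.e.\ $\chi\in\bar G$, a contradiction. Since moreover $V(T,\cdot)=g\equiv1$ near $\chi$ (as $\bar G$ is closed), this yields $V(s,\psi)\to1$ as $(s,\psi)\to(T,\chi)$, so $V_{*}(T,\chi)=1=g_{*}(\chi)$.

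For $V^{*}(T,\cdot)\le g^{*}$, only $\chi\in\mathrm{int}(\bar G)$ matters (otherwise $g^{*}(\chi)=1\ge V^{*}(T,\chi)$), and there I must show $V^{*}(T,\chi)=0$, i.e.\ $\tilde\P_{\psi}\in\Vc(s)$ for every $(s,\psi)$ sufficiently close to $(T,\chi)$. The first step is to observe that $\chi\in\mathrm{int}(\bar G)$ forces $\tilde\P_{\chi}$ into the $\Wc_{2}$-interior of $G$ (the ball $\{\mu:\Wc_{2}(\mu,\tilde\P_{\chi})<\rho\}$ lies in $G$ as soon as the $\L_{2}$-ball of radius $\rho$ around $\chi$ lies in $\bar G$, since $\tilde\Omega^{\i}$ is atomless and supports optimal couplings); fix such a $\rho$, so that $\{\mu:\Wc_{2}(\mu,\tilde\P_{\psi})\le\rho/2\}\subset G$ whenever $\Wc_{2}(\tilde\P_{\psi},\tilde\P_{\chi})\le\rho/2$. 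It then suffices, for $T-s$ small, to realize $\tilde\P_{\psi}$ by some $\hat\chi\in\Xb^{2}_{s}$ and to exhibit a control $\nu$ — a constant $u_{0}\in\Ur$, or a feedback keeping the trajectory inside the tube — for which $\Wc_{2}(\P^{B}_{X^{s,\hat\chi,\nu}_{T}},\tilde\P_{\psi})\le\rho/2$ $\P^{\circ}$-a.s.; then $\P^{B}_{X^{s,\hat\chi,\nu}_{T}}\in G$ a.s., so $\tilde\P_{\psi}\in\Vc(s)$, $V(s,\psi)=0$, and $(s,\psi)\to(T,\chi)$ gives $V^{*}(T,\chi)=0$.

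The main obstacle is precisely this last pathwise statement. Boundedness of $(b,a)$ readily gives $\E[\Wc_{2}(\P^{B}_{X^{s,\hat\chi,\nu}_{T}},\tilde\P_{\psi})^{2}]\le C(T-s)$, but this only controls the conditional Wasserstein distance in $\L_{2}(\P^{\circ})$, hence in probability, whereas the definition of $\Vc$ requires the terminal inclusion to hold $\P^{\circ}$-a.s. To close this gap I would exploit the structure of $G$ near the terminal time — in the half-space/neighbourhood formulation of Section~\ref{sec: other formulation} one may, using the martingale-representation reformulation sketched in the introduction, choose the control on $[s,T]$ so that the relevant functional of the conditional law stays exactly at its initial level, hence in $G$, for $\P^{\circ}$-a.e.\ Brownian path — or derive a conditional Gronwall estimate upgrading \eqref{eq: continuity loi par rapport condi initial} to such a pathwise bound. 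Assembling the four one-sided bounds then gives $V^{*}(T,\cdot)=g^{*}$ and $V_{*}(T,\cdot)=g_{*}$.
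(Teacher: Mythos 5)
Your reduction and the first non-trivial inequality are sound: $\Vc(T)=G$, hence $V(T,\cdot)=g$, the inequalities $V^*(T,\cdot)\ge g^*$ and $V_*(T,\cdot)\le g_*$ come for free, and your proof of $V_*(T,\cdot)\ge g_*$ is essentially the paper's part (ii) — boundedness of $(b,a)$ gives the mean-square estimate on $[s_n,T]$, one passes to an a.s.\ convergent subsequence and uses closedness of $G$ (equivalently of $\bar G$); phrasing it through $\Wc_2$ of the conditional laws rather than the $\L_2$-distance of the terminal random variables is only cosmetic.

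The genuine gap is the remaining inequality $V^*(T,\cdot)\le g^*$, which you acknowledge you cannot close. Your strategy — for every $(s,\psi)$ near $(T,\chi)$ with $\chi\in{\rm int}(\bar G)$, exhibit a control for which $\Wc_2\big(\P^B_{X^{s,\hat\chi,\nu}_T},\tilde\P_\psi\big)\le\rho/2$ holds $\P^\circ$-a.s. — cannot be carried out under the standing assumptions: the estimates behind Proposition \ref{prop: existence unicite eds} only bound this distance in $\L_2(\P^\circ)$, the Brownian integral over $[s,T]$ is not pathwise small, and no structure of $G$ beyond closedness is assumed, so neither of your proposed fixes (special structure of $G$, or a ``conditional Gronwall'' upgrade to a pathwise bound) is available in this generality. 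The paper argues in the contrapositive instead: if $V^*(T,\chi)=1$ while $g^*(\chi)=0$, pick $(t_n,\chi_n)\to(T,\chi)$ with $V(t_n,\chi_n)=1$; then \emph{no} control succeeds at $(t_n,\chi_n)$, in particular the constant control $u_0$ fails, so the terminal state $X^{t_n,\chi_n,u_0}_T$ realizes values in $\bar G^{c}$ on a non-null set of Brownian paths; since boundedness of $(b,a)$ gives $X^{t_n,\chi_n,u_0}_T\to\chi$ in $\L_2$, one concludes that $\chi$ lies in the closure of $\bar G^{c}$, contradicting $\chi\in{\rm int}(\bar G)$. The idea you are missing is that one never needs to construct a control that succeeds almost surely; one only needs that a single fixed control fails along the approximating sequence, combined with the convergence of its terminal state to $\chi$. (The paper's own passage to the limit is terse about how the positive-probability failure sets interact with the mean-square convergence, but its contrapositive structure is exactly what dispenses with the pathwise bound you were after.) As it stands, this half of the theorem is unproved in your proposal.
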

\begin{proof}
(i) We first prove that $V^*(T,.)=g^*$. Since $V(T,.)=g$, we have $V^*(T,.)\geq g^*$. For the reverse inequality, we argue by contradiction and suppose that   $1=V^*(T,\chi)> g^*(\chi)=0$ for some $\chi \in \L_{2}(\tilde \Omega^{\i},\tilde \Fc^{\i},\tilde \P^{\i};\R^d)$. 
Since $g^*(\chi)=0$, we know that $\chi\in {\rm int}(\bar G)$. 
Let $(t_n,\chi_n)_n$ be a  sequence such that
$(t_n,\chi_n,V(t_n,\chi_n)) \to (T,\chi,1)$.
Fix some $u_0\in \rm U$ and denote by $X^{t_n,\chi_n,u_0}$ the solution to \reff{eq: SDEMF} starting from $\chi_n$ at $t_n$ and controlled by the constant processes $\nu=u_0$. Then, $X_T^{t_n,\chi_n,u_0}\in \bar G ^c$,  after possibly considering a subsequence. Sending $n$ to $\infty$, we obtain that $\chi$ belongs to the closure of $\bar G^c$, which is a contradiction.

\noindent (ii) We now prove that  $V_*(T,.)=g_*$. Since $V(T,.)=g$ we have $V_*(T,.)\leq g_*$. Again the reserve inequality is proved by contradiction. Suppose that $0=V_*(T,\chi)< g_*(\chi)=1$  for some $\chi \in \L_{2}(\tilde \Omega^{\i},\tilde \Fc^{\i},\tilde \P^{\i};\R^d)$. Since $g_*=g$,   we know that $\chi\in \bar G^{c}$.
Let $(t_n,\chi_n)_n$ be a  sequence such that
$
(t_n,\chi_n,V(t_n,\chi_n)) \to (T,\chi,0)$.
Then, up to taking a subsequence, there exists $\nu^n\in \Uc$ such that $X_T^{t_n,\chi_n,\nu_n}\in \bar G$. Since  $a$ and $b$ are continuous  bounded and $\bar G$ is closed in $\L_{2}(\tilde \Omega^{\i},\tilde \Fc^{\i},\tilde \P^{\i};\R^d)$, we deduce that $\chi\in \bar G$ by sending $n$ to $\infty$, which is a contradiction.
\end{proof}

\begin{Remark} Note that the terminal condition in Theorem \ref{THMTC} is discontinuous, which prevents us from proving uniqueness of a solution to our PDE. This point will be further discussed in Section \ref{sec: other formulation} below.
\end{Remark}

\section{Additional remark{s}}\label{sec: Uccirc}

\subsection{{On the formulation}}\label{sec: other formulation}

The formulation considered in this paper naturally leads to a PDE characterization with a discontinuous terminal condition (upper- and lower-semi-continuous enveloppes of $1-\1_{G}$). Even for PDEs stated on a subset of $\R^{d}$ this is problematic from a numerical point of view, in particular because comparison does not hold. In some cases, an alternative formulation can be used in order to retrieve a regular terminal condition and open the door to the study of comparison and possibly of numerical methods  by using already existing results on PDE's on Hilbert spaces,  see e.g.~\cite{fabbri2017stochastic}.\footnote{Note that, even for general stochastic target problems set on $\R^{d}$, no general comparison theorem has been established so far. This is done on a case by case basis, and we therefore do not enter into this issue in the abstract setting of this paper, but rather leave this to the future study of particular situations.} Let us discuss this in the context of Example \ref{example}.

We consider the same problem as in  Example \ref{example} but now take the cost  induced by the fertilizing effort of each particle into account.  Its dynamics is of the form:
\begin{align*}
C^{t,\nu}&=\int_{t}^{\cdot} b^{C}(\nu_{s})ds,
\end{align*}
in which  $b^{C}$ is non-negative. 
The initial budget of the farmer at $t$ is $y\in \R$, and we set $Y^{t,y,\nu}:=y-C^{t,\nu}_{\cdot}$, so that $\E_{B}[Y^{t,y,\nu}]$ denotes the remaining running budget: initial budget minus integral with respect to the Lebesgues measure of the costs associated to each particle.
Letting $\hat X^{t,\chi,\nu}:=(X^{t,\chi_{X},\nu},Y^{t,y,\nu})$, with $\chi=(\chi_{X},y)$, we retrieve the dynamics \eqref{eq: SDEMF} for $\hat X^{t,\chi,\nu}$. The aim of the farmer is to find the minimal initial budget $y$ and a control $\nu$ such that $\P^{B}_{X^{t,\chi,\nu}_{T}}\in G_{X}$ and $\E_{B}[Y^{t,y,\nu}_{T}]\ge 0$ $\P$-a.s.~for some closed subset $G_{X}$ of the collection of probability measures with second order moment. Otherwise stated, he aims at computing  at $t$ how much money should be put aside to cover with certainty\footnote{One could relax the constraint by just asking for $\P[\E_{B}[Y^{t,y,\nu}_{T}]\ge 0]\ge m$ for some $m\in (0,1)$, see \cite{BET10}.} the costs of driving the field in a given set of acceptable states at time $T$.  

In this context, let us define\footnote{The state space being increased to $\R^{d+1}$.}, for $t\in[0,T]$ and $\mu_{X}\in \Pc_2$,
$$
v(t,\mu_{X}):=\inf\{y\in \R: (\mu_{X},\delta_{y})\in  \Vc(t)\}
$$
where $\delta_{y}$ is the Dirac mass at $y$ and $\Vc$ is defined with respect to $G=G_{X}\times G_{Y}$ for $G_{Y}$ defined as the collection of probability measures with support on $\R$,  with finite second order moments and non-negative first order moment. The dynamic programming principle of Theorem \ref{Th-DDP} reads as follows :

{\rm (GDP1)} If $y>v(t,\mu_{X})$ then there exists $\nu \in \Uc$ and $(\chi_{X},y)\in \Xb^{2}_{t}$  such that $\E_{B}[Y^{t,y,\nu}_{\theta}] \ge v(t,\P^{B}_{X^{t,\chi_{X},\nu}_{\theta}})$  and $\P^{B}_{\chi_{X}}=\mu_{X}$  $\P$-a.s. 

{\rm (GDP2)}  If there exists  $\nu \in \Uc$ and $(\chi_{X},y)\in \Xb^{2}_{t}$  such that $\E_{B}[Y^{t,y,\nu}_{\theta}] > v(t,\P^{B}_{X^{t,\chi_{X},\nu}_{\theta}})$ and $\P^{B}_{\chi_{X}}=\mu_{X}$  $\P$-a.s., then   $y\ge v(t,\mu_{X})$. 

Indeed, $y>v(t,\mu_{X})$ implies that $(\mu_{X},\delta_{y})\in \Vc(t)$, which by Theorem \ref{Th-DDP} induces that $(\P^{B}_{X^{t,\chi_{X},\nu}_{\theta}},\P^{B}_{Y^{t,y,\nu}_{\theta}})\in \Vc(\theta)$, for some  $\nu \in \Uc$ and $(\chi_{X},y)\in \Xb^{2}_{t}$ such that $\P^{B}_{\chi_{X}}=\mu_{X}$. Since   $\E_{B}[Y^{t,\chi_{Y},\nu}_{T}]\ge 0$ $\P$-a.s.~for some   $\chi_{Y}\in \L_{2}(\Omega^{\i},\Fc^{\i}_{\theta},\P;\R)$ is equivalent to saying that $\E_{B}[Y^{t,y,\nu}_{T}]\ge 0$ $\P$-a.s.~for $y:=\E^{B}[\chi_{Y}]$, this implies that $(\P^{B}_{X^{t,\chi_{X},\nu}_{\theta}},\delta_{\E_{B}[Y^{t,y,\nu}_{\theta}]})\in \Vc(\theta)$. Conversely, $\E_{B}[Y^{t,y,\nu}_{\theta}] > v(t,\P^{B}_{X^{t,\chi_{X},\nu}_{\theta}})$ and $\P^{B}_{\chi_{X}}=\mu_{X}$ $\P$-a.s.~implies that $(\P^{B}_{X^{t,\chi_{X},\nu}_{\theta}},\P^{B}_{Y^{t,y,\nu}_{\theta}})\in \Vc(\theta)$. 
 
 From this version of the geometric dynamic programming principle, it is not difficult to adapt the arguments of Section \ref{subsec: visco cara}, see e.g.~\cite{BET10,ST02a}, to derive that the lift $V$ of $v$ is such that $V_{*}$ and $V^{*}$ (if finite, e.g.~because $b^{C}$ is bounded) are respectively viscosity super- and subsolutions of 
   \eqref{eq: EDPV}, on the corresponding space (now associated to the $X$ component above only), with terminal conditions $V_{*}(T,\cdot)\ge 0\ge V^{*}(T,\cdot)$, up to mild regularity conditions on the coefficients. 

\subsection{On the choice of controls}
{In the above sections, the collection $\Uc$ of controls permits to take into account the exact value of the initial random variable $\chi$, it is $\F$-progressively measurable. If we think in terms of controlling a population of particles whose initial distribution is the law of $\chi$, this means that we allow each of the particles to have its own control. One can also consider the case where the control belongs to the subclass $\Uc^{\circ}$ of controls in $\Uc$ that are only $\bar \F^{\circ}$-progressively measurable. This would mean that the control of each particle does not depend on its  position but only of the conditional law of the whole population of particles given $B$. 
\\
This can be treated in a similar way as the case we considered above. In particular, the result of Proposition \ref{prop: equivWS} becomes trivial, see Proposition \ref{prop: equiv loi jointe}. In \reff{eq: concat control}, the control $\nu$ will be $\bar \F^{\circ}$-progressively measurable and the map $\vartheta$ will take values in $\Uc^{\circ}$, so that $\bar\nu$ will actually be $\bar \F^{\circ}$-progressively measurable since the argument  $X^{t,\chi,\nu}_{\theta}(\omega^{\circ},\cdot)$ only enters as a random variable (not as the value of the random variable). As for the first part of the proof of Theorem \ref{Th-DDP}, the construction will just be simpler. Then,  Theorem \ref{Th-DDP} actually holds for the class  $\Uc^{\circ}$ as well. As for the PDE characterization of Theorem \ref{THMBT}, we only have to replace  $\Nc_{\eps}(t,\chi,P)$ with
$ \{ u\in \Ur~:~|\E_{B}[a_t(\chi,\P_\chi,u)P]|\le \eps  \}$, which changes the definition of $\Hc^{*}$ and $\Hc_{*}$ accordingly. Up to this modification, the proof is the same. }

 \bibliographystyle{plain}
\bibliography{BibBDK}

\begin{thebibliography}{10}

\bibitem{AB06}
Charalambos~D. Aliprantis and Kim~C. Border.
\newblock {\em Infinite dimensional analysis A Hitchhiker's Guide}.
\newblock Springer, Berlin Heidelberg New York, 3rd edition edition, 2006.

\bibitem{BertsekasShreve.78}
Dimitri~P. Bertsekas and Steven~E. Shreve.
\newblock {\em Stochastic Optimal Control. The Discrete-Time Case.}
\newblock Academic Press, New York, 1978.

\bibitem{birkner10}
Matthias Birkner and Rongfeng Sun.
\newblock Annealed vs quenched critical points for a random walk pinning model.
\newblock {\em Annales de l'Institut Henri Poincar{\'e}, Probabilit{\'e}s et
  Statistiques}, 46(2):414--441, 2010.

\bibitem{BET10}
Bruno Bouchard, Romuald Elie, and Nizar Touzi.
\newblock Stochastic target problems with controlled loss.
\newblock {\em SIAM Journal on Control and Optimization}, 48(5):3123--3150,
  2009.

\bibitem{BLPR17}
Rainer Buckdahn, Juan Li, Shige Peng, and Catherine Rainer.
\newblock Mean-field stochastic differential equations and associated pdes.
\newblock {\em The Annals of Probability}, 45(2):824--878, 2017.

\bibitem{carda12}
Pierre Cardaliaguet.
\newblock {N}otes on {M}ean {F}ield {G}ames (from {P}.-{L}. {L}ions' lectures
  at {C}oll\`ege de {F}rance).
\newblock \url{https://www.ceremade.dauphine.fr/~cardalia/MFG20130420.pdf},
  2012.

\bibitem{CD14}
Ren\'e Carmona and Fran\c{c}ois Delarue.
\newblock The master equation for large population equilibriums.
\newblock In {\em Stochastic Analysis and Applications 2014}, pages 77--128.
  Springer, 2014.

\bibitem{CCD15}
Jean-Fran\c{c}ois Chassagneux, Dan Crisan, and Fran\c{c}ois Delarue.
\newblock A probabilistic approach to classical solutions of the master
  equation for large population equilibria.
\newblock {\em arXiv preprint arXiv:1411.3009}, 2014.

\bibitem{claisse2013note}
Julien Claisse, Denis Talay, and Xiaolu. Tan.
\newblock A note on solutions to controlled martingale problems and their
  conditioning.
\newblock {\em SIAM Journal on Control and Optimization}, 54(2):1017--1029,
  2016.

\bibitem{DV95}
Donald Dawson and Jean Vaillancourt.
\newblock Stochastic mckean-vlasov equations.
\newblock {\em Nonlinear Differential Equations and Applications},
  2(2):199--229, 1995.

\bibitem{ElKQ95}
Nicole El~Karoui and Marie-Claire Quenez.
\newblock Dynamic programming and pricing of contingent claims in an incomplete
  market.
\newblock {\em SIAM Journal on Control and Optimization}, 33:29--66, 1995.

\bibitem{fabbri2017stochastic}
Giorgio Fabbri, Fausto Gozzi, and A~Swiech.
\newblock Stochastic optimal control in infinite dimension.
\newblock {\em Probability and Stochastic Modelling. Springer}, 2017.

\bibitem{FL99}
Hans F\"ollmer and Peter Leukert.
\newblock Quantile hedging.
\newblock {\em Finance and Stochastics}, 3(3):251--273, 1999.

\bibitem{giacomin07}
Giambattista Giacomin.
\newblock {\em Random polymer models}.
\newblock Imperial College Press, 2007.

\bibitem{JMW10}
Benjamin Jourdain, Sylvie M{\'e}l{\'e}ard, and Wojbor~A. Woyczynski.
\newblock Nonlinear sdes driven by l\'evy processes and related pdes.
\newblock {\em Latin American Journal of Probability and Mathematical
  Statistics}, 4:1--29, 2008.

\bibitem{KS91}
Ioanis. Karatzas and Steven~E. Shreve.
\newblock {\em Brownian Motion and Stochastic Calculus}, volume 113 of {\em
  Graduate Texts in Mathematics}.
\newblock Springer, second edition, 1991.

\bibitem{kur}
Thomas~G. Kurtz.
\newblock Lectures on stochastic analysis.
\newblock {\em Department of Mathematics and Statistics, University of
  Wisconsin, Madison, WI}, pages 53706--1388, 2001.

\bibitem{DM89}
Pierre Le~Doussal and Jonathan Machta.
\newblock Annealed versus quenched diffusion coefficient in random media.
\newblock {\em Physics Review B}, 40(13):9427--9430, 1989.

\bibitem{lip01}
Robert Liptser and Albert~N. Shiryaev.
\newblock {\em Statistics of random Processes: I. general Theory}, volume~5.
\newblock Springer Science \& Business Media, 2013.

\bibitem{pham2018zero}
Huyen Pham and Andrea Cosso.
\newblock Zero-sum stochastic differential games of generalized mckean-vlasov
  type.
\newblock {\em arXiv preprint arXiv:1803.07329}, 2018.

\bibitem{soner2002dynamic}
H.~Mete Soner and Nizar Touzi.
\newblock Dynamic programming for stochastic target problems and geometric
  flows.
\newblock {\em Journal of the European Mathematical Society}, 4(3):201--236,
  2002.

\bibitem{ST02a}
H.~Mete Soner and Nizar Touzi.
\newblock Stochastic target problems, dynamic programming, and viscosity
  solutions.
\newblock {\em SIAM Journal on Control and Optimization}, 41(2):404--424, 2002.

\bibitem{soner2009dynamic}
H~Mete Soner and Nizar Touzi.
\newblock The dynamic programming equation for second order stochastic target
  problems.
\newblock {\em SIAM Journal on Control and Optimization}, 48(4):2344--2365,
  2009.

\bibitem{S89}
Alain-Sol Sznitman.
\newblock Topics in propagation of chaos.
\newblock pages 165--251. Springer, 1991.

\bibitem{V88}
Jean Vaillancourt.
\newblock On the existence of random mckean-vlasov limits for triangular arrays
  of exchangeable diffusions.
\newblock {\em Stochastic Analysis and Applications}, 6(4):431--446, 1988.

\end{thebibliography}

\end{document}